\newtheorem{theorem}{Theorem}[section]
\newtheorem{proposition}[theorem]{Proposition}
\newtheorem{lemma}[theorem]{Lemma}
\newtheorem*{theorem*}{Theorem}
\theoremstyle{definition}
\newtheorem{definition}[theorem]{Definition}
\newtheorem{remark}[theorem]{Remark}
\definecolor{javared}{rgb}{0.6,0,0}
\definecolor{javagreen}{rgb}{0.25,0.5,0.35}
\definecolor{javapurple}{rgb}{0.5,0,0.35}
\definecolor{javadocblue}{rgb}{0.25,0.35,0.75}
\newcommand{\1}{\mathbbm{1}}
\newcommand{\cB}{\mathcal{B}}
\newcommand{\cC}{\mathcal{C}}
\newcommand{\cE}{\mathcal{E}}
\newcommand{\cG}{\mathcal{G}}
\newcommand{\cS}{\mathcal{S}}
\newcommand{\cO}{\mathcal{O}}
\newcommand{\de}{\operatorname{d}}
\newcommand{\ra}{\rightarrow}
\newcommand{\sC}{\mathscr{C}}
\DeclareMathOperator{\E}{\mathbb{E}}
\DeclareMathOperator{\p}{\mathbb{P}}
\DeclareMathOperator{\diam}{diam}
\newcommand{\Om}{\Omega}
\newcommand{\comps}{{\rm Comp}(G_N)}
\begin{document}

\begin{center}
{\Large \bf Voter models on subcritical inhomogeneous\\[1mm] random graphs}\\
\vspace{0.7cm}
\textsc{John Fernley\footnote{Department of Mathematical Sciences, University of Bath, Claverton Down, Bath, BA2 7AY,
United Kingdom, {\tt j.d.fernley@bath.ac.uk}, {\tt m.ortgiese@bath.ac.uk}.}}
and \textsc{Marcel Ortgiese\footnotemark[1]}\\ 
\end{center}

\begin{abstract}
The voter model is a classical interacting particle system modelling how consensus is formed across a network. 
We  analyse the time to consensus for the voter model when the underlying graph is a subcritical scale-free random graph. Moreover, we generalise the model to include a `temperature' parameter. The interplay between the temperature and the structure of the random graph leads to a very 
rich phase diagram, where in the different phases different parts of the underlying geometry dominate the time to consensus. 
Finally, we also consider a discursive voter model, where voters discuss their opinions with their neighbours. Our proofs rely on the well-known duality to 
coalescing random walks and a detailed understanding of the structure of the random graphs.

\par\medskip
\noindent{\emph{2010 Mathematics Subject Classification}:}
  Primary\, 60K35,  
  \ Secondary\, 05C80, 05C81, 82C22  
  
  \par\medskip
\noindent{\emph{Keywords:} voter model; inhomogeneous random graphs; scale-free networks; interacting particle systems; random walks on random graphs}

\end{abstract}


\section{Introduction}

Voter models are a classical example of an interacting particle system defined on a graph that models how consensus is formed e.g.\ across a social network.
In the standard model voters can have one of two opinions and at rate $1$ a vertex updates and copies the opinion from one of its neighbours.

Classically, the underlying graph is $\mathbb{Z}^d$, see e.g.~\cite{liggett1985interacting}, and typical questions study the structure and existence of invariant measures.
When considered on a finite graph, the invariant measures become trivial and the main question is how long it takes to reach consensus.
One example when this question can be answered is the complete graph on $N$ vertices, in which case the voter model is a variation of the Moran model from evolutionary biology and it is known that consensus is reached in time of order $N$. 

More recently, the voter model has also been studied on random graphs, see e.g~\cite[Chapter 6.9]{durrett2007random}, and any result is very much dependent on the underlying model.
Moreover, the case when the underlying graph is inhomogeneous in the sense that its empirical degree distribution shows power law behaviour has not
been treated systematically. In the nonrigorous literature, this analysis has been carried out by~\cite{sood2008voter} for a mean-field model.

It is well known that voter models are dual to coalescing random walks (which in some sense trace back where opinions came from). In particular, 
if we consider the final coalescence time at which a set of random walks started at each vertex in the underlying graph have coalesced into a single walker, then 
any paper that bounds the final coalescence time  also bounds the consensus time in the voter model. Examples
include \cite{oliveira2013mean, kanade2016coalescence}.

In this paper, we consider the voter model on subcritical inhomogeneous random graphs showing power law behaviour. The reason that we focus 
on subcritcal random graphs is that, as we will see below, the behaviour observed here cannot be captured by mean-field methods. Moreover, in our model the 
random graphs are disconnected, but the components are still of polynomial order in the number of vertices and have the fractal-like structure seen in Figure~\ref{component_figure}. Therefore,  the asymptotics of the consensus time (defined as the first time that all components reach consensus) depends on a subtle interplay of the different structures of the components.

Furthermore, we also introduce a ``temperature'' parameter $\theta \in \mathbb{R}$ into the model, so that vertices update at rate proportional to  $\de(v)^\theta$, where $\de(v)$ denotes the degree of vertex $v$. This extra parameter leads to  interesting phase transitions in $\theta$, where in the different phases different structural elements of the underlying random graphs dominate the consensus time.

Finally, we also consider a variation of the voter model, also considered by \cite{moinet2018generalized} and similar to the ``oblivious'' model of \cite{cooper2018discordant}, which we will refer to as the \emph{discursive voter model}. In this model, vertices update at rate $\de(v)^\theta$, but then they `discuss' with a randomly chosen neighbour and agree on one opinion chosen at random from their respective opinions. This model gives a very different phase diagram -- surprisingly, the large components do not explain the consensus time for large $\theta$ or for small $\theta$.

Our proofs  rely on duality to  coalescing random walkers and using the right tools to bound the expected coalescence times. This is combined with a very fine analysis of the structure of subcritical inhomogeneous random graphs models that is not readily available in the literature.

\begin{figure}[t!]
\centering
\vspace{-1cm}
\centerline{\includegraphics[width=1\textwidth ]{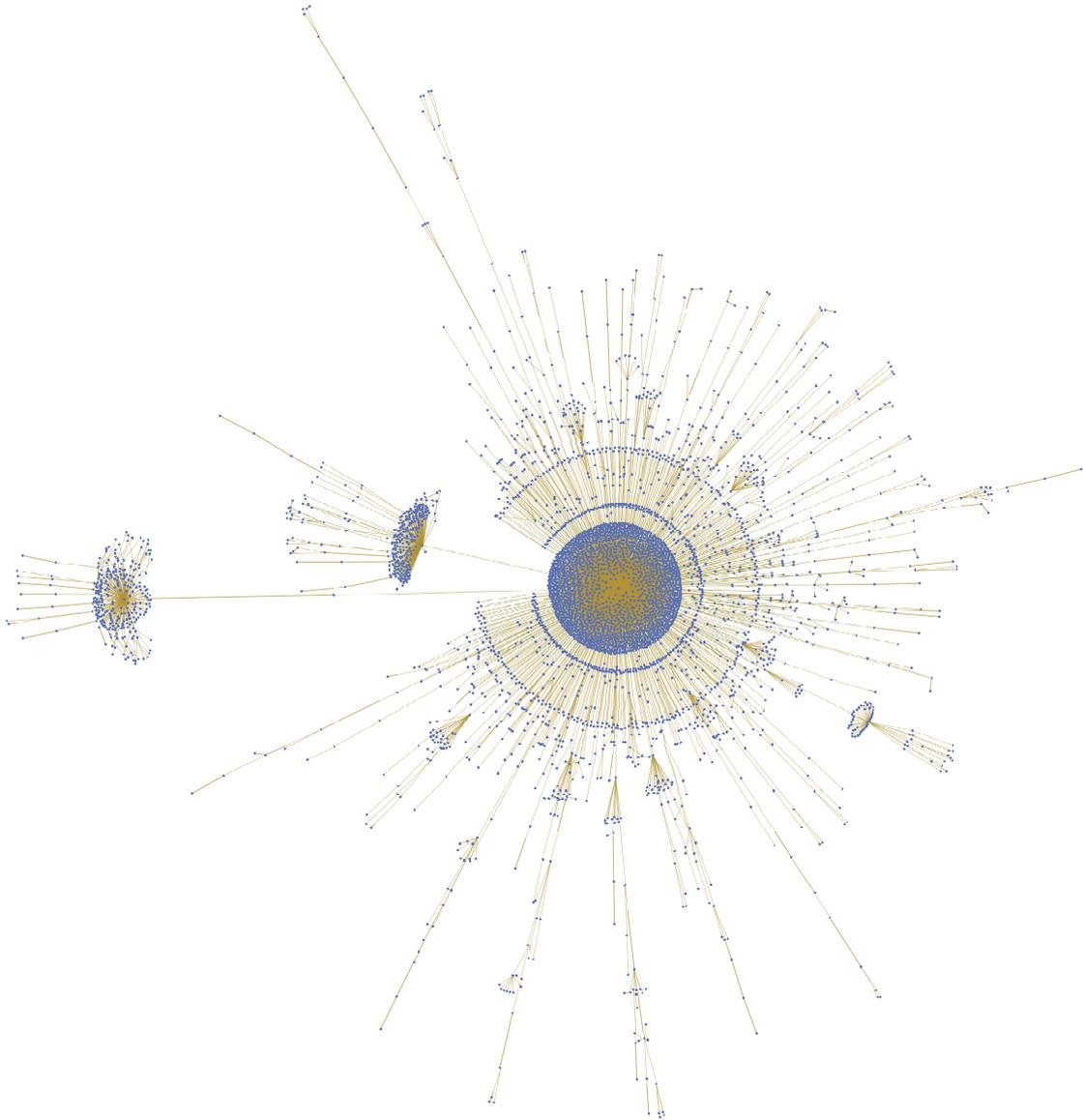}}
\caption{The component containing the vertex $1$, for a graph in the class $\cG_{\beta,\gamma}$ with subcritical network parameters $(\beta,\gamma)=(0.05,0.45)$. On these $4429$ vertices we can already see the emerging fractal structure.}\label{component_figure}
\end{figure}

{\bf Notation.} Throughout the paper, we will use the following notation.
For sequences of positive random variables $(X_N)_{N \geq 1}$ and $(Y_N)_{N \geq 1}$, we write
 $X_N=O_{\mathbb{P}}^{\log N}(Y_N)$ if there exists $K> 0$ such that 
\[
\mathbb{P}\left(X_N\leq Y_N (\log N)^K \right)\rightarrow 1
\]
as $N \rightarrow \infty$. Similarly, we write  $X_N=\Omega_{\mathbb{P}}^{\log N}(Y_N)$ if
 $Y_N=O_{\mathbb{P}}^{\log N}(X_N)$. If both bounds hold we write $X_N=\Theta_{\mathbb{P}}^{\log N}(Y_N)$.
 
Throughout we write $[N] = \{1,\ldots, N\}$.
For any graph $G$, we write $V(G)$ for its vertex set and $E(G)$ for its edge set. Moreover, if $v , w\in V(G)$, we write $v \sim w$  if $v$ and $w$ are neighbours, i.e.\ if $\{ v, w \} \in E(G)$. Also, for $v \in V(G)$, we
denote by $\de(v)$ its degree (i.e.\ the number of its neighbours).

\section{Main results}

In this paper, we will consider the following two variants of the voter model.

\begin{definition}\label{def:voter}
Let $G = (V,E)$ be a (simple) finite graph. Given $\eta \in \{ 0,1\}^V$, define for $i\neq j \in V$, 
\[ \eta^{i \leftarrow j} (k) = \left\{ \begin{array}{ll} \eta(j)  & \mbox{if } k = i \in V, \\ \eta(k)  & \mbox{if } k \in V \setminus \{ i \} . \end{array}\right. \]
The \emph{voter model} is a Markov process $(\eta_t)_{t \geq 0}$ with state space $\{0,1\}^V$ and with the following update rules depending on a parameter $\theta \in \mathbb{R}$:
\begin{itemize}
	\item[(a)]  In the \emph{classical} voter model, for any neighbours $i$ and $j \in V$, the state $\eta \in \{0,1\}^V$ is replaced by $\eta^{i \leftarrow j}$ at rate
	\[ \de(i)^{\theta -1} . \]
	\item[(b)] For the \emph{discursive voter model}, for any neighbours $i$ and $j \in V$, the state $\eta \in \{0,1\}^V$ is replaced by $\eta^{i \leftarrow j}$ at rate
	\[ \frac{1}{2} \big(\de(i)^{\theta -1} + \de(j)^{\theta -1}\big) . \]
\end{itemize}
\end{definition}

The classical voter model has the interpretation that each vertex $i$ updates its opinion at rate $\de(i)^\theta$ by copying the opinion of a uniformly chosen neighbour.
In the discursive model, each vertex $i$ becomes active at rate $\de(i)^\theta$, then it chooses  a neighbour uniformly at random and then both neighbours agree on a common opinion by picking one of their opinions randomly.
Note also that for the `temperature' parameter $\theta$, $\theta = 0$ corresponds to the `standard' voter model, where each vertex $i \in [N]$ updates its opinion at rate $1$.

We consider a general class of inhomogeneous random graphs, which include a special case of the Chung-Lu model.  The latter has vertex set $[N]$ and each edge is present independently  with probability
\begin{equation}\label{chung_lu_edges}
p_{ij}:=\frac{\beta N^{2\gamma-1}}{i^\gamma j^\gamma}\wedge 1.
\end{equation}
We generalize this model as follows.

\begin{definition}\label{define_G} Fix $\beta, \gamma >0$ such that $\beta+2\gamma<1$. 
We say that a sequence of (simple) random graphs $(G_N)_{N \geq 1}$, where $V(G_N) = [N]$, is 
in the class $\cG_{\beta,\gamma}$ of \emph{subcritical inhomogeneous random graph models}
with parameters $\beta$ and $\gamma$, if for any $N$ there exists a symmetric array $(q_{i,j})_{i ,j \in [N]}$ of numbers in $(0,\frac 12)$ such that each edge $\{i,j\}$, $i \neq j$, is present in $G_N$ independently of all others with probability $q_{i,j}$. Moreover, for $(p_{i,j})_{i,j \in [N]}$  as in~\eqref{chung_lu_edges},  we require that
\[
\lim_{N\rightarrow\infty} \sum_{i \neq j} \frac{(p_{i,j}-q_{i,j})^2}{p_{i,j}}=0.
\]
\end{definition}

\begin{remark}\label{rem:IRG}
\begin{itemize} 
\item[(a)] Our definition includes various well-known models of inhomogeneous random graphs additionally to the Chung-Lu (CL) model, where $q_{i,j} = p_{i,j}$.
Other models are the simple Norros-Reittu random graph (SNR) with $q_{i,j}=1-e^{-p_{i,j}}$ obtained from the Norros-Reittu model by flattening all multiedges, see also Section~\ref{Branching_section} below. 
It also includes  the Generalised Random Graph (GRG) with $q_{i,j}=\frac{p_{i,j}}{1+p_{i,j}}$, which has the distribution of a particular configuration model conditioned to be simple \cite[Theorem 7.18]{van2016random}.
\item[(b)] In the following, we will slightly abuse notation and write $G_N \in \mathcal{G}_{\beta, \gamma}$ if we mean that $G_N$ is the random graph with vertex set $[N]$ in a sequence of random graphs in $\mathcal{G}_{\beta, \gamma}$.
\item[(c)]
Any two representatives of the class $\mathcal{G}_{\beta, \gamma}$ are asymptotically equivalent, see~\cite[Theorem 6.18]{van2016random}, so that if a statement holds with high probability  for one particular model  it also holds for any other one from the class. In the proofs,  we will sometimes make use of this freedom and choose a particular model when it is convenient.
\item[(d)] By \cite{bollobas2007phase} we know that the regime $\beta +2\gamma<1$ is the complete subcritical region, so in particular our class of random graphs does not have a giant component.
Also, by~\cite{chung2006complex} it is known that these models have power-law exponent $\tau = 1 + 1/\gamma$.
\end{itemize}
\end{remark}

Note that our network model is typically disconnected, so then the voter model $(\eta_t)_{t \geq 0}$ can hit an absorbing state without being in global consensus.
So, let $C_1, \ldots, C_k$ be the components of a graph $G$ on the vertex set $[N]$.
The \emph{consensus time} is the first time that there is consensus on each component, i.e.\ 
\[ \tau_{\rm cons} = \inf\{ t \geq 0 \, : \, \eta_t|_{C_i} \mbox{ is constant for each } i \in [k] \} . \]

We can now state our first main theorem on the expected consensus time.

\begin{theorem}\label{class_subcrit}
Suppose  $G_N \in \cG_{\beta,\gamma}$ for some $\beta+2\gamma<1$ and that the initial conditions are chosen according to $\mu_u$ such that each initial opinion is an  independent $\operatorname{Bernoulli}(u)$ random variable, for some $u \in (0,1)$. Then, for the classical voter model with parameter $\theta \in \mathbb{R}$, we have
\begin{equation}\label{eq:2611-1}
\mathbb{E}^{\theta}_{\mu_u}(\tau_{\text{\textnormal{cons}}}|G_N)=
\Theta_{\mathbb{P}}^{\log N}\left( N^{\mathbbm{c}}\right)
\end{equation}
where the exponent $\mathbbm{c}=\mathbbm{c}(\gamma,\theta)$ is given as
\[
\mathbbm{c}=
\begin{cases}
\gamma & \theta\geq 1 ,\\
\gamma \theta & \frac{1}{2-2\gamma}<\theta< 1 , \\
\frac{\gamma}{2-2\gamma} & 0 \leq \theta\leq \frac{1}{2-2\gamma} ,\\
\frac{\gamma(1- \theta)}{2-2\gamma} & \theta < 0.
\end{cases}
\]
\end{theorem}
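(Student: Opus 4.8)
The plan is to use the duality with coalescing random walks to turn the problem into one about meeting times of random walks on the components of $G_N$, and then to combine standard estimates for coalescing reversible Markov chains with a fine analysis of the component structure of $\cG_{\beta,\gamma}$.

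\textbf{Step 1: Duality.} The voter model with temperature $\theta$ is dual to the coalescing random walk system in which one walker is started at each vertex, a walker at $v$ jumps at rate $\de(v)^\theta$ to a uniformly chosen neighbour, and walkers coalesce on contact. Running this system on a fixed graph $G_N$ and writing $N_t^i$ for the number of surviving walkers currently located in the component $C_i$, duality gives
\[
\p_{\mu_u}(\tau_{\text{\textnormal{cons}}}>t\mid G_N)=1-\prod_i\E\big[u^{N_t^i}+(1-u)^{N_t^i}\mid G_N\big].
\]
Since $u\in(0,1)$ is fixed, integrating over $t$ shows that $\E^{\theta}_{\mu_u}(\tau_{\text{\textnormal{cons}}}\mid G_N)$ is, up to constants depending only on $u$, of the order of $\E\big[\max_i T_i\mid G_N\big]$, where $T_i$ is the time for the coalescing system restricted to $C_i$ to collapse to a single walker. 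As there are at most $N$ components and (conditionally on $G_N$) each $T_i$ has sub-exponential tails above its mean, $\E\big[\max_i T_i\mid G_N\big]$ and $\max_i\E[T_i\mid G_N]$ agree up to a factor $(\log N)^{O(1)}$, which is harmless here. So it suffices to estimate the mean coalescence time $T(C)$ of the walkers on each component $C$.

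\textbf{Step 2: From coalescence to meeting times.} On a component $C$ the dual walk is reversible: it has the jump chain of simple random walk (uniform neighbour) but state-dependent holding times, with stationary law $\pi^C_v\propto\de(v)^{1-\theta}$. By the standard theory of coalescing random walks on reversible chains (Aldous--Fill, together with the refinements in \cite{oliveira2013mean,kanade2016coalescence}) one has $\E[T(C)]=\Theta_{\p}^{\log N}\big(t_{\text{meet}}(C)\big)$, where $t_{\text{meet}}(C)$ is the maximal expected meeting time of two independent dual walks in $C$; up to the same polylog slack this equals the meeting time from the stationary start. The meeting time is the hitting time of the diagonal by the product chain, so — up to mixing-time corrections that are polylogarithmic for the graphs at hand — when $C$ mixes rapidly it is of order
\[
\Big(\sum_{v\in C}\pi^C(v)^2\,\de(v)^{\theta}\Big)^{-1}=Z_C^2\Big(\sum_{v\in C}\de(v)^{2-\theta}\Big)^{-1},\qquad Z_C:=\sum_{v\in C}\de(v)^{1-\theta},
\]
while if the walk is trapped at one of a few high-degree hubs of $C$ (which is what happens for $\theta<0$) the meeting time must instead be estimated as a hitting time between the hub neighbourhoods via electrical-network bounds. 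In every case it is an explicit function of the size of $C$, its largest degrees, and an effective resistance within $C$.

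\textbf{Step 3: Component structure and optimisation — the main obstacle.} What remains is to identify which component of $G_N$ maximises the meeting time of Step 2. For this one needs a description, valid with high probability and \emph{uniformly} over all components, of the possible shapes of the components of $G_N$; using the asymptotic equivalence of the models in $\cG_{\beta,\gamma}$ (Remark~\ref{rem:IRG}(c)) one may work with the Norros--Reittu Poisson multigraph and dominate each explored component by its associated multitype branching-process exploration, showing that at each birth scale $k\in[N]$ the graph contains — and contains nothing essentially larger than — a component built around a hub of degree $\asymp(N/k)^\gamma$ with a tree-like neighbourhood whose size is controlled by $k$, and that such a structure appears only when its expected count is $\gtrsim 1$, which pins down the admissible range of $k$. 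Substituting these parameters into the estimate of Step 2 gives a meeting time $N^{f(k,\gamma,\theta)+o(1)}$ for an explicit exponent $f$, and maximising over the admissible $k$ returns exactly $\mathbbm{c}(\gamma,\theta)$: for $\theta\ge\frac{1}{2-2\gamma}$ the extremal component is the one around the maximal-degree hub (degree $\asymp N^\gamma$, where the dual walk mixes), giving $N^{\gamma}$ for $\theta\ge1$ and $N^{\gamma\theta}$ for $\frac{1}{2-2\gamma}<\theta<1$; for $0\le\theta\le\frac{1}{2-2\gamma}$ the binding constraint becomes the appearance of a more spread-out structure at a smaller scale; and for $\theta<0$ the walk is trapped at the hubs and the bottleneck is the passage time between hubs of a multi-hub component, again after optimising over the scale. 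The matching lower bound is obtained by exhibiting one such extremal component (a second-moment / Poisson-approximation argument for its presence) and noting that restricting the coalescing system to it can only decrease the coalescence time. I expect this uniform structural analysis — showing that, for every $\theta$ simultaneously, \emph{no} component anywhere in $G_N$ has its coalescence time exceed $N^{\mathbbm{c}}$ by more than polylogarithmic factors — to be the technically heaviest part of the proof.
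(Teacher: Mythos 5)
Your overall architecture matches the paper's: use duality to reduce to coalescing random walks (Lemma~\ref{binary_lower_bound} and Proposition~\ref{prop:coal}), convert coalescence time to meeting times, and then combine estimates of meeting times on individual components with a uniform structural analysis of $G_N$ via the Norros--Reittu branching-process coupling. Step~3's identification of the competing extremal components (the hub of maximal degree versus a more spread-out ``double star''--type structure) and the understanding that the maximiser switches as $\theta$ varies is also exactly the paper's picture.

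However, Step~2 contains a genuine gap. You claim that when $C$ ``mixes rapidly'' the meeting time equals the mean-field quantity $\bigl(\sum_{v\in C}\pi^C(v)^2\de(v)^\theta\bigr)^{-1}$ up to a polylogarithmic mixing correction, and assert that such corrections are ``polylogarithmic for the graphs at hand.'' They are not. The components in question are trees (Lemma~\ref{le:subcritical_branch_control}), and on a tree the mixing time is comparable to the hitting time of a central vertex, which for the big component $\sC(1)$ is of order $N^{\gamma/(2-2\gamma)}$ -- polynomial, not polylogarithmic. The crude bound $t_{\rm mix}/\|\pi\|_2^2$ from \cite{kanade2016coalescence} would then overshoot the claimed exponent $\gamma\theta$ in the regime $\tfrac{1}{2-2\gamma}<\theta<1$. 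The paper sidesteps this precisely with Proposition~\ref{tree_meeting_theorem} ($t_{\rm meet}\leq 189\,t_{\rm hit}(s)/\pi(s)$) and Theorem~\ref{partial_meeting}, which bounds $t_{\rm meet}$ by the meeting time of the chain \emph{observed on a subset}. The key move for $\theta\in(0,1)$ is to observe the product chain on the set $\{k\}\cup L_k$ of the hub and its degree-one neighbours, where by the exchangeability of leaves the partially-observed coalescence is elementary (Lemma~\ref{le:meeting_star}), giving $t_{\rm meet}^\pi(\{k\}\cup L_k)\lesssim \de(k)^\theta$; the residual term in Theorem~\ref{partial_meeting} is then controlled by the hitting time of the hub, $t_{\rm hit}(k)\lesssim N^{\gamma/(2-2\gamma)}$, which is exactly the second branch of the exponent $\mathbbm{c}$. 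Without some device of this kind your Step~2 does not yield an upper bound matching the claimed $\mathbbm{c}$. (Your lower bounds via the $L^2$ meeting-time bound and via a conductance/bottleneck cut across the double star are consistent with the paper's use of Propositions~\ref{lower_meeting_bound} and~\ref{conductance_theorem}.)

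One smaller point: the reduction $\E[\max_i T_i]\approx\max_i\E[T_i]$ does not follow merely from ``sub-exponential tails above the mean''; what is actually used (Proposition~\ref{prop:coal}) is the hitting-time exponential tail $\p(\tau_{\rm meet}^{i,j}>t)\leq\exp(-\lfloor t/(e\,t_{\rm meet})\rfloor)$ from Aldous--Fill, applied through a fixed pairing of walkers within each component and then a union bound, which is where the $\log n$ factor enters and is shown there to be sharp for reducible chains. It is the same conclusion, but the justification needs to be made at that level of precision.
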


Note that  we are only averaging over the voter model dynamics, so that the expectation in~\eqref{eq:2611-1} is still random (but depends only on the realization of the particular random graph). 

The averaged $\theta=0$ classical dynamics are studied in~\cite{sood2008voter} and they find a linear expected time to hit \emph{global} consensus whenever $\gamma<1/2$. In our non-averaged dynamics however the consensus is  componentwise and hence  we find a sublinear consensus time $N^\frac{\gamma}{2-2\gamma}=N^\frac{1}{2\tau-4}=o( \sqrt{N} )$.

\begin{remark}[Dominating contributions for the classical model]\label{class_dominating_contrib} We remark that the theorem shows that dominating contributions to the consensus time come from different parts of the random graph in the different regimes. 
The proof can be adapted to show that on $\sC(1)$, the component of vertex $1$, which  is with high probability the largest component, we have the following asymptotics: 
\[
\mathbb{E}^{\theta}_{\mu_u}(\tau_{\text{\textnormal{cons}}}(\sC(1))|G_N)
=
\Theta_{\mathbb{P}}^{\log N}\left(N^c\right), \quad \text{where }
c=
\begin{cases}
\gamma & \theta\geq 1 ,\\
\gamma \theta & \frac{\gamma}{1-\gamma}<\theta< 1 , \\
\frac{\gamma^2}{1-\gamma} & 0 \leq \theta\leq \frac{\gamma}{1-\gamma} ,\\
\frac{\gamma^2(1- \theta)}{1-\gamma} & \theta < 0.
\end{cases}
\]
Therefore, by comparison with Theorem \ref{class_subcrit}, in the regime $\theta<1/(2-2\gamma)$ we find $\sC(1)$ is not the component that takes longest to reach consensus. Instead in this case, the consensus time of Theorem \ref{class_subcrit} the dominating contribution comes from 
the consensus time on  a \emph{double star} component, i.e.\ a tree component with two connected vertices of polynomially maximal degree, which exists with high probability.
\end{remark}

Next we consider the consider the discursive model, where we have the following phase diagram.

\begin{theorem}\label{obl_subcrit}
Suppose  $G_N \in \cG_{\beta,\gamma}$ for some $\beta+2\gamma<1$ and that the initial conditions are chosen according to $\mu_u$ such that each initial opinion is an  independent $\operatorname{Bernoulli}(u)$ random variable, for some $u \in (0,1)$. Then, for the discursive voter model with parameter $\theta \in \mathbb{R}$, we have
\[
\mathbf{E}^{\theta}_{\mu_u}(\tau_{\text{\textnormal{cons}}}|G_N)=
\Theta^{\log N}_{\mathbb{P}}\left( N^{\mathbf{c}} \right)
\]
where the exponent $\mathbf{c}=\mathbf{c}(\gamma,\theta)$ is given as
\[
\mathbf{c}=
\begin{cases}
 \frac{\gamma}{2-2\gamma}  & \theta \geq \frac{3-4\gamma}{2-2\gamma} , \\
 \gamma(2-\theta) & 1 < \theta < \frac{3-4\gamma}{2-2\gamma} , \\
 \gamma & 2\gamma \leq \theta \leq 1 ,\\
 \frac{\gamma(2-\theta)}{2-2\gamma} & \theta < 2\gamma.
\end{cases}
\]
\end{theorem}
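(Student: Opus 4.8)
\textbf{Proof strategy for Theorem~\ref{obl_subcrit}.}
The plan is to exploit the duality between the discursive voter model and a system of coalescing random walks: on each component $C$, the componentwise consensus time is, up to constants, the expected time for walkers started at every vertex of $C$ to coalesce, and the global consensus time is the maximum over components. Since we work up to polylogarithmic factors, the usual reductions apply: the meeting time of two walkers dominates the $n$-particle coalescence time up to a $\log$ factor (by the standard argument that, once reduced to two walkers, a further factor $\log n$ suffices), and $\mathbf{E}_{\mu_u}^\theta(\tau_{\mathrm{cons}}|G_N) = \Theta_{\mathbb{P}}^{\log N}\big(\max_C \max_{v,w\in C} \E_v[\tau_{\mathrm{meet}}^w]\big)$. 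The first structural input I would invoke is the description of $\cG_{\beta,\gamma}$ from Remark~\ref{rem:IRG} together with the fine component analysis promised in the introduction: with high probability the largest component $\sC(1)$ has polynomial size $N^{\gamma/(1-\gamma)}$ (up to logs) and a hub-of-hubs tree-like structure, while there also exist, with high probability, \emph{double-star} components carrying two vertices of near-maximal degree $\approx N^\gamma$, and isolated low-degree structures (short paths, small stars). The theorem is then proved by computing the meeting-time asymptotics of the discursive walk on each of these candidate ``worst'' structures and taking the maximum; the four regimes in $\mathbf c$ should correspond to which structure wins.

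Concretely, the discursive random walk is the walk whose jump rate out of $v$ is $\propto \de(v)^\theta$ but which, crucially, \emph{also} gets dragged by its neighbours becoming active — so the walk at $v$ moves at total rate $\sum_{w\sim v}\tfrac12(\de(v)^{\theta-1}+\de(w)^{\theta-1})$ and, when it moves, goes to a neighbour chosen with a weight reflecting both endpoints' degrees. I would first identify its invariant measure: a short computation should give something like $\pi(v)\propto \de(v)^{\theta-1}\cdot(\text{something})$ or $\pi(v)\propto \de(v)$ corrected by a $\de(v)^{\theta-1}$ factor — in any case a reversible measure, since each edge contributes symmetrically. Reversibility is the key gift here: it lets me use the standard toolbox for meeting times, namely $\E_v[\tau^w_{\mathrm{meet}}] \asymp \mathcal{R}_{\mathrm{eff}}(v,w)\cdot \pi_{\mathrm{tot}}$-type bounds via the effective resistance / commute-time identity for the \emph{product chain} of two walkers, plus eigenvalue (spectral gap / relaxation time) estimates for the upper bound and hitting-time-of-a-set lower bounds. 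For the tree components (double star and $\sC(1)$) effective resistances are just weighted path lengths and are therefore computable essentially by hand; the total conductance $\sum_v \pi(v)$ is controlled by the degree sequence of the component, which the structural analysis provides up to logs.

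The heart of the matter is thus a case analysis. (i) On a double star with two hubs of degree $d\approx N^\gamma$ joined by an edge, two walkers sitting on the two hubs meet in time governed by how slowly the hubs ``release'' a walker into the connecting edge: each hub has holding contribution $\sim d^{\theta-1}$ from its own clock but $\sim d\cdot d^{\theta-1}=d^\theta$ aggregate push from leaves, so the relevant timescale is a power of $d=N^\gamma$ depending on the sign of the exponents — this should yield the $\theta<2\gamma$ branch $\gamma(2-\theta)/(2-2\gamma)$ after correctly accounting for the normalisation by the component's total conductance, and the flat $\gamma$ branch for $2\gamma\le\theta\le1$. (ii) On $\sC(1)$, with its iterated hub structure and diameter of logarithmic order (up to the fractal corrections in Figure~\ref{component_figure}), the meeting time is driven by walkers parked at the deepest low-degree leaves versus the core; effective resistance is a logarithmic-length weighted sum, and balancing this against the total conductance $\sim N^{\gamma/(1-\gamma)}$-ish mass gives the $\theta>1$ branches $\gamma(2-\theta)$ and, for very large $\theta$, the saturation at $\gamma/(2-2\gamma)$ — the latter presumably coming not from $\sC(1)$ but from a \emph{path}-like or small-component contribution, matching the remark that ``the large components do not explain the consensus time for large $\theta$''. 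I would verify each claimed exponent by checking it is simultaneously an upper bound (spectral gap estimate on that structure, uniformly over the poly-many such structures, union bound absorbed into the $\log$ slack) and a lower bound (some single pair of vertices on some component realises it w.h.p.), and then take the max of the four candidate exponents to recover the stated piecewise formula.

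The main obstacle I anticipate is \textbf{not} the duality or the resistance calculus — those are routine — but rather (a) pinning down the precise structure of $\cG_{\beta,\gamma}$ components to the accuracy needed (existence w.h.p. of double stars with two simultaneously-maximal-degree hubs, the layered degree profile of $\sC(1)$, and the length/number of the longest near-isolated paths), since this is exactly the ``fine analysis not readily available in the literature'' flagged in the introduction; and (b) controlling the discursive walk's \emph{dragging} term on irregular trees, because the walk's holding time at a vertex depends on the degrees of all its neighbours, so a single high-degree neighbour can make a low-degree vertex ``sticky'' (for $\theta<1$) or ``slippery'' (for $\theta>1$) — this coupling between a vertex and its neighbourhood is what makes the discursive phase diagram genuinely different from the classical one, and getting the exponents right requires carefully tracking, for the extremal structures, both $\sum_{w\sim v}\de(w)^{\theta-1}$ and the resulting transition probabilities rather than treating the walk as a simple degree-biased walk. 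Once those two structural/combinatorial inputs are in hand, assembling the four regimes is bookkeeping.
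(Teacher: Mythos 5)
Your high-level plan (duality to coalescing walks, reduction to two-particle meeting times, then a structural case analysis over candidate worst components: double stars, $\sC(1)$, path-like pieces) is the right shape, and it is indeed what the paper does. But there are several concrete gaps that would prevent the proposal from landing.

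First, the invariant measure. You hedge with ``$\pi(v)\propto \de(v)^{\theta-1}\cdot(\text{something})$ or $\pi(v)\propto \de(v)$ corrected by a $\de(v)^{\theta-1}$ factor''. In fact $\mathbf{Q}^\theta(i,j)=\frac12(\de(i)^{\theta-1}+\de(j)^{\theta-1})$ is \emph{symmetric} in $i,j$, so detailed balance forces $\pi$ to be \emph{uniform} on each component. This is the single cleanest and most consequential fact about the discursive dynamic, and the entire calculation hinges on it: conductances become $c(ij)=\frac{1}{|\sC|}\cdot\frac12(\de(i)^{\theta-1}+\de(j)^{\theta-1})$, the relaxation/meeting-time lower bounds of Proposition~\ref{conductance_theorem} and Proposition~\ref{lower_meeting_bound} simplify drastically, and the ``dragging'' effect you worry about is absorbed entirely into the conductances rather than the stationary distribution. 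Without spotting this, the resistance bookkeeping you propose would be considerably more painful and likely wrong.

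Second, the scale of the extremal structures is off, which would give wrong exponents. The double-star components that the paper exhibits (Propositions~\ref{prop:existence_simple_double_star} and~\ref{prop:existence_long_double_star}) have hubs of degree $\Theta^{\log N}_{\mathbb{P}}(N^{\gamma/(2-2\gamma)})$, coming from vertices of index about $N^{(1-2\gamma)/(2-2\gamma)}$ — \emph{not} degree $\approx N^\gamma$ as you write (that is the degree of vertex $1$). Similarly, $|\sC(1)|$ is $\Theta^{\log N}_{\mathbb{P}}(N^\gamma)$, not $N^{\gamma/(1-\gamma)}$. Plugging your scales into the resistance formulas would not reproduce the piecewise $\mathbf c$ in the statement.

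Third, and most importantly for the $\theta\ge \frac{3-4\gamma}{2-2\gamma}$ branch, the extremal structure is not generic ``path-like/small-component'' as you guess, but very specifically a \emph{long double star}: two hubs of degree $\Theta(N^{\gamma/(2-2\gamma)})$ joined by a path through two intermediate vertices of degree exactly $2$ (Proposition~\ref{prop:existence_long_double_star}). The length-$\ge 3$ path is essential for the discursive dynamic: an edge directly between the two hubs would have conductance scaling like $d^{\theta-1}/|\sC|$, which is \emph{too large} when $\theta$ is large, whereas the middle edge between the two degree-$2$ vertices has conductance $\sim 2^{\theta-1}/|\sC|$, independent of the hub degrees, and this is what makes the component slow at high temperature. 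Your proposal does not contain this idea, and without it the lower bound in that regime fails. Likewise, you would want to know that the paper's upper bound is not a spectral-gap estimate but the subset-observation device (Theorem~\ref{partial_meeting}): observe the two-walker chain on the leaf set $L_k$ of a hub, note that the observed coalescence there is Wright–Fisher, and combine with the hitting time of the hub; this is where the $N^{\gamma(2-\theta)}$ and $N^\gamma$ upper branches actually come from.
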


Unlike for the classical model, where large positive $\theta$ slowed down consensus when compared to the standard model $\theta=0$, for the discursive model we see that large $\theta$ accelerates consensus by accelerating mixing: for each $\gamma$, $\mathbf{c}(\gamma,\theta)$ is non-increasing in $\theta$. See also Figure~\ref{fig:exponent_figure} for an illustration.

\begin{figure}[b!]
\centering
\centerline{\includegraphics[width=1.2\textwidth ]{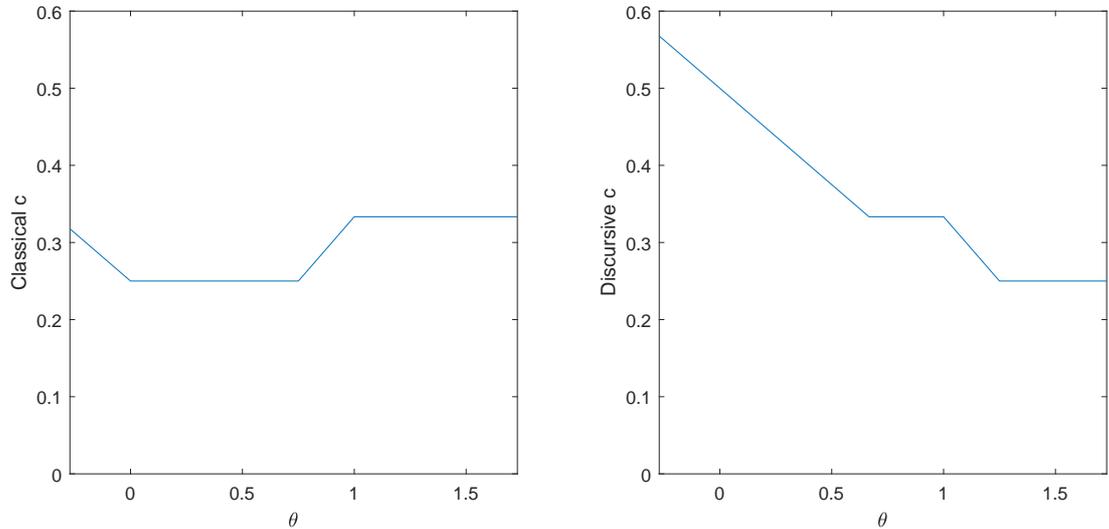}}
\caption{This figure shows the typical shapes by setting $\gamma=1/3$. Somewhat surprisingly, for any subcritical $(\beta,\gamma)$ parameters the function $\mathbbm{c}(\gamma,\theta)$ is \emph{not} monotonic in $\theta$ for the classical model. On the left we see that the most popular model $\theta=0$ is one of the fastest parameter values for consensus, as part of the optimal interval $\theta \in [ 0 , {1}/({2-2\gamma})]$. Conversely, the discursive model shows monotonicity in the exponent.}\label{fig:exponent_figure}
\end{figure}

\begin{remark}[Dominating contributions for the discursive model]
For comparison, we state the consensus order of $\sC(1)$ with the discursive dynamic.
\[
\mathbf{E}^{\theta}_{\mu_u}(\tau_{\text{\textnormal{cons}}}(\sC(1))|G_N)
=
\Theta_{\mathbb{P}}^{\log N}\left(N^c\right), \quad \text{where }
c=
\begin{cases}
\frac{\gamma^2}{1-\gamma} & \theta\geq \frac{2-3\gamma}{1-\gamma} ,\\
\gamma(2-\theta)  & 1<\theta< \frac{2-3\gamma}{1-\gamma} , \\
\gamma & 3-\frac{1}{\gamma} \leq \theta\leq 1 ,\\
\frac{\gamma^2(2- \theta)}{1-\gamma} & \theta < 3-\frac{1}{\gamma}.
\end{cases}
\]
The most obvious difference here is that $\sC(1)$ makes a dominating contribution to the consensus order on $G_N$, as seen in Theorem \ref{obl_subcrit}, only for parameters $\theta$ in a an \emph{intermediate} range $\theta \in [2\gamma, \frac{3-4\gamma}{2-2\gamma} ]$ as opposed to in Remark \ref{class_dominating_contrib} where this was true for $\theta$ sufficiently large. Again we will see in the proofs that in the regimes where $\sC(1)$ does not dominate, a component of double star type will dominate instead, which which exhibits slow mixing.
\end{remark}

\begin{remark}[Transition in the power law] For illustration, we rephrase the main theorems by fixing $\theta$ and varying the tail exponent $\tau  = 1 + 1/\gamma$. 
For $G_N \in \cG_{\beta,\gamma}$ and for the classical dynamics we obtain for  $\theta \in \left(\frac{1}{2}, 1 \right)$, 
\[
\mathbb{E}^{\theta}_{\mu_u}(\tau_{\text{\textnormal{cons}}}|G_N)=
\Theta_{\mathbb{P}}^{\log N}
\begin{cases} N^{\frac{1}{2\tau-4}} & \tau \leq 3 + 2 \left( \frac{1-\theta}{2\theta -1} \right) ,\\
N^{\frac{\theta}{\tau-1}} & \text{otherwise,} 
\\
\end{cases}
\]
and for the discursive dynamics with $\theta \in \left(1,\frac{3}{2}\right)$, this translates to
\[
\mathbf{E}^{\theta}_{\mu_u}(\tau_{\text{\textnormal{cons}}}|G_N)=
\Theta_{\mathbb{P}}^{\log N}
\begin{cases}
N^{\frac{1}{2\tau-4}} & \tau \leq 3+2\left(\frac{\theta-1}{3-2\theta}\right) ,\\
N^{\frac{2-\theta}{\tau-1}} & \text{otherwise} .\\
\end{cases}
\]
From the proof one can see that in both these cases the consensus time on the largest component is dominant only for small $\tau$. If $\theta \in (0,1)$ then for the discursive dynamics we have that 
\[
\mathbf{E}^{\theta}_{\mu_u}(\tau_{\text{\textnormal{cons}}}|G_N)=
\Theta_{\mathbb{P}}^{\log N}
\begin{cases}
N^{\frac{1}{\tau-1}} & \tau \leq 3+ 2\left( \frac{1-\theta}{\theta} \right) ,\\
N^{\frac{2-\theta}{2\tau-4}} & \text{otherwise.} \\
\end{cases}
\]
In this case, one can see from the proofs that the asymptotics for the largest component dominate for  large $\tau$ values.
\end{remark}

\begin{remark}[The supercritical Regime]
On the complete graph the model reduces to the one-dimensional Wright-Fisher model, an example of a well mixed regime. On a supercritical scale free network, too, we expect polylogarithmic mixing times in most cases so that the consensus time on a component $\mathscr{C}$ has order
\begin{equation}\label{order_conjecture}
\Theta^{\log N}_{\mathbb{P}} \left( \frac{1}{\sum_{v \in \mathscr{C}} q(v) \pi(v)^2} \right)
\end{equation}
where $\pi, q$ are the stationary distribution and vertex jump rate of the
random walk obtained by tracing back opinions when restricted to the component $\mathscr{C}$, see Section~\ref{sec:duality} for precise definitions.

The order in Equation \eqref{order_conjecture} is that of the \emph{mean-field} model, see  \cite{sood2008voter} for the mean-field approach. A very general rigorous analysis is made in \cite{cox2016convergence}, under assumption on the mixing and meeting times for the dual chain. However, mixing times in particular are very sensitive to work with and therefore highly model-dependent.

For the Erd\H{o}s-R\'enyi graph we have detailed mixing time and structural results, see  \cite{benjamini2014mixing}. Also on configuration models we have some mixing results, and note that we include a configuration model (with random degrees) in the class of graphs that we are considering in Definition \ref{define_G}. However existing results assume \emph{subpolynomial maximum degree} as in \cite{berestycki2018random}, or a \emph{degree lower bound} as in \cite{abdullah2012cover}.
The conjecture is that these results do extend to general configuration models with power-law degree sequence and so $t_{\text{mix}}=\Theta_{\mathbb{P}}\left( \log^2 N \right)$. Results \cite[(3.21)]{cox2016convergence} and \cite[Lemma 3.17]{aldous-fill-2014}, with discrete-time analogues in \cite{kanade2016coalescence}, would then give the bounds
\[
\Omega_{\mathbb{P}}^{\log N} \left( \frac{1}{\sum_v  q(v) \pi(v)^2} \right)
,\quad
O_{\mathbb{P}}^{\log N} \left( \frac{t_{\text{mix}}}{\sum_v \pi(v)^2} \right),
\]
which in many regimes, those where $\sum_v q(v) \pi(v)^2 \approx \sum_v \pi(v)^2$, are polynomially tight.

In fact, \cite{durrett2010some} conjectures for the $\theta=0$ model via Aldous' ``Poisson Clumping Heuristic'' \cite{aldous2013probability} that the order of the mean consensus time is really the exact polynomial without logarithmic corrections
as found by the mean-field approximation in \cite{sood2008voter}. 
A structural result comparable to \cite{ding2014anatomy} but for the rank one scale-free network would solve the open question of mixing time, but also potentially give a direct handle on meeting time without the logarithmic factors. 
\end{remark}

The remaining paper is organised as follows: 
In Section~\ref{sec:duality}, we describe the (classical) duality of the voter model to coalescing random walks and then develop various tools for  coalescing random walks. This section works whenever the random walks follow the dynamics of a reversible Markov chain, so apply to both our models. 
In Section~\ref{sec:structure}, we derive structural results for subcritical inhomogeneous random graphs. In Section~\ref{voter_models_section}, we combine the structural results with the bounds in Section~\ref{sec:duality} to complete the proofs of Theorems~\ref{class_subcrit} and~\ref{obl_subcrit}.

\section{Duality and bounds on the coalescence time}\label{sec:duality}

In this section, we consider the voter model on an arbitrary finite state space. Moreover, we will discuss the main tool to analyse the voter model, which is the duality to a system of coalescing random walks. In the remaining section, we will then show various bounds on the coalescence time of a system of general random walks.

We will describe a general voter model, where the voters are indexed by $[n]=\{1,\ldots,n\}$ for some $n \in \mathbb{N}$ and the dynamics are governed by a matrix $Q = (Q(i,j))_{i, j \in [n]}$ which is the generator of a continuous-time, reversible Markov chain $(X_t)_{t \geq 0}$ on $[n]$.

Let $O$ be the set of possible opinions, then the $Q$-voter model $(\eta_t)_{t \geq 0}$ with $\eta_t \in O^{[n]}$ 
evolves as follows: for all $i\neq j \in [n]$ at rate $Q(i,j)$ the current state $\eta \in O^{[n]}$ is replaced by
\[ \eta^{i \leftarrow j} (k) = \left\{ \begin{array}{ll} \eta(j) &\mbox{if } k = i \\ \eta(k) &\mbox{if } k \in [n]\setminus \{ i \} .\end{array} \right. \]
In other words, at rate $Q(i,j)$ the voter $i$ copies the opinion from voter $j$.

It is classical that the voter model is dual to a system of coalescing random walks, see~\cite{liggett1985interacting}. 
 The duality can be described via a graphical construction. We start with the graph $\{(j,t) \, : \, j \in [n], t \geq 0\}$ and independent Poisson point processes $(N_{i,j}(t))_{t \in \mathbb{R}}, i \neq j$ (with rates $Q(i,j)$ respectively). If $t_k$ denotes a  jump of $N_{i,j}$ we draw an arrow from $(t_k,j)$ to $(t_k,i)$. Given any initial condition $\eta_0 \in O^{[n]}$, we then let the opinions flow upwards starting at time $0$ and if they encounter an arrow  following the direction of the arrow and replacing the opinion of the voter found at the tip of the arrow.
Now, we fix a time horizon $T > 0$ and  we start with  $n$ random walkers located at each of the points $(j,T), j \in [n]$, then the trajectories follow the graph downwards, following each arrow and if two walkers meet they coalesce. Denote
by $\xi_t^T \subseteq [n] = \{ \xi_t^T(j) \, , \, j \in [n]\}$ the set of positions of these walkers at time $t \geq 0$, where thus $\xi_0^T = [n]$. From this construction, it follows that each walker follows the dynamics of the Markov chain $X$, so we obtain a system of coalescing Markov chains/random walks.
Moreover, one can immediately see that the voter model at time $T$ can be obtained, by tracing the random walk paths backwards, i.e.\  for any $j \in [n]$,
\begin{equation}\label{eq:duality} \eta_T(j) = \eta_0 ( \xi_{T}^T(j)) . \end{equation}

We are interested in general reversible Markov chains, so we do not necessarily assume that the Markov chain is irreducible. However, since $X = (X_t)_{t \geq 0}$ is reversible, we can decompose the state space into its irreducible components, which we 
will denote by $C_1, \ldots, C_k$, so that $X$ restricted to $C_j$ is irreducible.
 In this case, 
for any  $j \in [k]$, we denote the consensus time on the $j$th component by
\[ \tau_{\rm cons}(C_j) = \inf\{ t \geq 0 \, : \, \eta_t |_{C_j} \mbox{ is constant}\}. \]
Then, define the overall consensus time
\[ \tau_{\rm cons} = \max_{j \in [k]} \tau_{\rm cons} (C_j) . \]

Our main interest in this article is in the case when $O = \{0 ,1\}$ and the  initial conditions $\eta_0$ are distributed according to $\mu_u$, the product of Bernoulli$(u)$ measures for some $u \in [0,1]$.
Then, we set
\[ t_{\rm cons}^{(u)} = \E_{\mu_u}(\tau_{\rm cons}). \]

For the duality, it will be easier to consider the voter model where each 
voter starts with a different opinion, i.e.\ $\eta_0 = [n]$. Here,  we define 
\[ t_{\rm cons}^* = \E_{[n]} (\tau_{\rm cons}). \]
For the system of coalescing random walks, we define for each irreducible component $C_j, j \in [k]$,
\[ \tau_{\rm coal} (C_j) = \inf\{ t \geq 0 \, : \, |\xi^T_t| = 1 \} , \]
i.e.\ the first time all walkers in this component have coalesced into a single walker. Moreover, we then define
\[ t_{\rm coal} = \E_{[n]} (\tau_{\rm coal}) , \quad \mbox{where }
\tau_{\rm coal} = \sup_{j \in [k]} \tau_{\rm coal}(C_j) . \]
By duality, we have that if the voter model starts in $\eta_0 = [n]$, then
\[ \p_{[n]} (\tau_{\rm coal} \leq T) = \p_{[n]} ( \tau_{\rm cons} \leq T) , \]
so $\tau_{\rm coal}$ and $\tau_{\rm cons}$ agree in distribution and in particular $t_{\rm coal} = t_{\rm cons}^*$.
As the following lemma shows, we can also get  bounds for $t_{\rm cons}^{(u)}$.

\begin{lemma}[Duality]\label{binary_lower_bound}
In the setting above, we have for any $u \in (0,1)$,
\[ t^{(u)}_{\text{\textnormal{cons}}}
\leq  t_{\text{\textnormal{coal}}}. \]
Suppose additionally that the dual Markov chain is irreducible, then for all $u \in (0,1)$,
\[
t^{(u)}_{\text{\textnormal{cons}}} 
 \geq 2u(1-u) \, t_{\text{\textnormal{coal}}} .
\]
\end{lemma}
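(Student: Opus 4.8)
The plan is to push both inequalities through the graphical construction and the duality identity $\eta_T(j)=\eta_0(\xi^T_T(j))$, reducing each bound to a pointwise comparison between the tails $\p_{\mu_u}(\tau_{\rm cons}>T)$ and $\p(\tau_{\rm coal}>T)$ at each fixed time horizon $T$, and then integrating over $T$ via $\E[X]=\int_0^\infty\p(X>T)\,\de T$.

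First I would fix $T>0$ and condition on the backward coalescing system $\xi^T=(\xi^T_t)_{0\le t\le T}$. Writing $C_1,\dots,C_k$ for the irreducible components and, for each $\ell$, letting $m_\ell=|\{\xi^T_T(j):j\in C_\ell\}|$ be the number of surviving walkers in $C_\ell$ at the bottom of the picture, I would note that the ancestor sets $\{\xi^T_T(j):j\in C_\ell\}$ for distinct $\ell$ are disjoint subsets of $[n]$, that every walker stays within its own component, and that $\eta_T|_{C_\ell}$ is constant exactly when $\eta_0$ is constant on the $\ell$th ancestor set. Since $\eta_0\sim\mu_u$ is a product of Bernoulli$(u)$ variables, conditionally on $\xi^T$ its restriction to those ancestor sets is i.i.d.\ Bernoulli$(u)$ and independent across $\ell$, whence
\[
\p_{\mu_u}(\tau_{\rm cons}>T)=\E\Big[\,1-\prod_{\ell=1}^{k}\big(u^{m_\ell}+(1-u)^{m_\ell}\big)\,\Big],
\]
which in the irreducible case $k=1$ reduces to $\E[g_u(|\xi^T_T|)]$ with $g_u(m):=1-u^m-(1-u)^m$.

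Next I would record the elementary facts about $g_u$: one has $g_u(1)=0$, $0\le g_u\le 1$, and $g_u$ is non-decreasing on $\{1,2,\dots\}$ (its increment equals $u^m(1-u)+(1-u)^mu\ge 0$), with $g_u(2)=2u(1-u)$; hence $2u(1-u)\,\1_{\{m\ge 2\}}\le g_u(m)\le\1_{\{m\ge 2\}}$, and more generally $1-\prod_\ell(u^{m_\ell}+(1-u)^{m_\ell})\le\1_{\{\exists\,\ell:\,m_\ell\ge 2\}}$ because every factor is at most $1$. Now $\{\exists\,\ell:m_\ell\ge 2\}$ is precisely the event that not all walkers of $\xi^T$ have coalesced within a window of length $T$, and by stationarity of the driving Poisson processes $N_{i,j}$ — the very observation already used to conclude $t_{\rm coal}=t^{*}_{\rm cons}$ — this event has probability $\p(\tau_{\rm coal}>T)$. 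Combining, for every $T>0$,
\[
2u(1-u)\,\p(\tau_{\rm coal}>T)\ \le\ \p_{\mu_u}(\tau_{\rm cons}>T)\ \le\ \p(\tau_{\rm coal}>T),
\]
the left inequality being applied under the irreducibility hypothesis (so that $k=1$); integrating over $T\in(0,\infty)$ then gives $2u(1-u)\,t_{\rm coal}\le t^{(u)}_{\rm cons}\le t_{\rm coal}$.

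The computation is essentially routine; the step that deserves the most care is the time-reversal/stationarity identification of the coalescence time of the finite-horizon dual system $\xi^T$ with the law of $\tau_{\rm coal}$, together with keeping the conditional independence of $\eta_0$ over the disjoint ancestor sets straight when $k>1$ in the upper bound. Beyond that, the only ingredient is the two-line monotonicity estimate on $g_u$.
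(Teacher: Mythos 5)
Your proof is correct, and the lower bound is essentially the paper's own argument stated slightly differently: the paper bounds $\mathbb{E}\big(u^{|\xi_T^T|}+(1-u)^{|\xi_T^T|}\big)\leq 1-2u(1-u)\mathbb{P}(|\xi_T^T|\geq 2)$, which is precisely your pointwise inequality $g_u(m)\geq 2u(1-u)\mathbbm{1}_{\{m\geq 2\}}$ in the complementary form, followed by the same integration over $T$.

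For the upper bound you take a genuinely different, if closely related, route. You push the same duality identity through and bound $1-\prod_\ell\big(u^{m_\ell}+(1-u)^{m_\ell}\big)\leq\mathbbm{1}_{\{\exists\ell:\,m_\ell\geq 2\}}$ pointwise, then integrate. The paper instead uses a ``recolouring'' coupling: run the voter model from the all-distinct initial state $\eta_0=[n]$, then independently assign each colour class a Bernoulli$(u)$ label; this yields a realization of the $\mu_u$-started process, and consensus for it occurs no later than for the distinct-colours process, giving $t_{\text{cons}}^{(u)}\leq t_{\text{cons}}^*=t_{\text{coal}}$ in one line. The paper's argument is shorter and sidesteps the multi-component bookkeeping entirely, but it asks the reader to supply the coupling. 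Your version is more uniform --- both directions come out of one computation --- and it makes the product structure over the irreducible components $C_1,\dots,C_k$ explicit, which is a clean way of seeing exactly why the lower bound, but not the upper one, requires irreducibility (for $k>1$ the product never falls below $u^k+(1-u)^k$ in the way the two-point argument needs). The one step worth being careful about, which you flag yourself, is the time-reversal/stationarity identification of the dual ancestor count $\{\exists\ell:m_\ell\geq 2\}$ with $\{\tau_{\text{coal}}>T\}$; this is the same observation the paper uses implicitly when it asserts $t_{\text{cons}}^*=t_{\text{coal}}$, so it is not an extra ingredient.
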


\begin{proof} 
By recolouring we can see that we reach consensus from the product Bernoulli  measure $\mu_u$ before we do from unique colours, and hence
\[
t^{(u)}_{\text{\textnormal{cons}}}
\leq
t^*_{\text{\textnormal{cons}}}
=
t_{\text{\textnormal{coal}}}.
\]

For the other direction, suppose that the dual Markov chain is irreducible. Then,  observe from the duality relation \eqref{eq:duality} that
\[
\mathbb{P}_{\mu_u}\left(\eta_T \text{ constant}  \right)
=
\mathbb{P}\left( \mu_u  \text{ constant on } \xi_{T}^T  \right)
=
\mathbb{E} \left(
u^{\left|\xi_{T}^T\right|}
+
(1-u)^{\left|\xi_{T}^T\right|}
\right)
\]
which we can crudely upper bound by considering the event $\{ \left|\xi_{T}^T\right| =1 \}$
\[
\begin{split}
\mathbb{E} \left( u^{\left|\xi_{T}^T\right|}+(1-u)^{\left|\xi_{T}^T\right|} \right)
&\leq
\mathbb{P}\left(
\left|\xi_{T}^T\right| =1
\right)+
\left(u^2+(1-u)^2\right)
\mathbb{P}\left(
\left|\xi_{T}^T\right| \geq 2
\right)\\
&=1-2u(1-u)
\mathbb{P}\left(
\left|\xi_{T}^T\right| \geq 2
\right). 
\end{split}
\]
Therefore we have
\[
\begin{split}
t^{(u)}_{\text{\textnormal{cons}}}
&= \int_0^\infty \p_{\mu_u}( \tau_{\rm cons} \geq t) \, {\rm d} t \\
& = 
\int_0^\infty 1-
\mathbb{E} \left(
u^{\left|\xi_{T}^T\right|}
+
(1-u)^{\left|\xi_{T}^T\right|}
\right)
{\rm d}T\\
&\geq
2u(1-u)
\int_0^\infty
\mathbb{P}\left(
\left|\xi_{T}^T\right| \geq 2
\right)
{\rm d}T
=
2u(1-u)t_{\text{\textnormal{coal}}}, 
\end{split}
\]
where we used irreducibility in the last step. 
\end{proof}

We will control the time $t_{\rm coal}$ until all random walkers have coalesced using the following two bounds in terms of the  two auxiliary quantities that we defined next.
First of all, let $X = (X_t)_{t \geq 0}$ and $Y = (Y_t)_{t \geq 0}$ be two independent reversible Markov chains with generator $Q$. Then, define the (expected) meeting time for $j \in [k]$ as
\[ t_{\rm meet}(C_j)  =  \max_{x,y \in C(j)} \E_{x,y} ( \tau_{\rm meet}) , \quad \mbox{where } \tau_{\rm meet} = \inf\{ t \geq 0 \, : \, X_t = Y_t \} . \]
Moreover, an important role will be played by the (expected) hitting time
defined for $j \in [k]$ as 
\[ t_{\rm hit}(C_j) = \max_{x,y \in C_j} \E_x (T_y) \, , \quad
\mbox{where } T_y = \inf\{ t \geq 0 \, : \, X_t = y \} . \]
Both these quantities give bounds on the coalescence time and thus on the consensus time.

\begin{proposition}\label{prop:coal} With the notation as above, we have that
	\[ \sup_{j \in [k]} t_{\rm meet}(C_j) \leq t_{\rm coal} \leq e( \log n +2) \sup_{j \in [k]} t_{\rm meet}(C_j) .\]
	Moreover, 
	 for any $ j \in [k]$, \[ t_{\rm meet}(C_j) \leq t_{\rm hit}(C_j) .\]
\end{proposition}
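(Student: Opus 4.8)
The plan is to establish the three inequalities separately. The middle one, $t_{\rm meet}(C_j)\le t_{\rm hit}(C_j)$, is the simplest: for two independent copies $X,Y$ of the chain started at $x,y\in C_j$, the meeting time $\tau_{\rm meet}$ is at most the time for $X$ to hit the (fixed, but random) current position of $Y$ — or more cleanly, one can freeze $Y$ at its starting point $y$ and note $\tau_{\rm meet}\le T_y$ for $X$, since the event $\{X_t=y\}$ is contained in $\{X_t=Y_t\}$ only if $Y$ has not moved; the honest argument is to run $Y$ for a bit and use the strong Markov property, but the cleanest route is to condition on the whole path of $Y$ and apply the hitting-time bound to $X$ against a moving target, which is dominated by $\max_{z}\E_x(T_z)\le t_{\rm hit}(C_j)$ via a standard first-hitting-of-a-moving-point argument; I would cite or reprove the elementary fact that the meeting time of two independent reversible chains is bounded by the worst-case single-chain hitting time (e.g.\ Aldous–Fill).

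For the lower bound $\sup_j t_{\rm meet}(C_j)\le t_{\rm coal}$: restrict attention to the component $C_j$ achieving the supremum, and within it pick $x,y$ maximising $\E_{x,y}(\tau_{\rm meet})$. In the coalescing system started from all of $[n]$, the two walkers that begin at $x$ and $y$ each evolve marginally as the chain $Q$, independently until they (or intermediate walkers) coalesce; but coalescence with a third walker only speeds things up in the sense that once the $x$-walker has merged with anything, its label-cluster still has to merge with the $y$-cluster, and the two clusters before they meet each move as independent copies of $Q$. Hence the time until the $x$-cluster and the $y$-cluster coalesce stochastically dominates... no — it is \emph{dominated by} $\tau_{\rm coal}(C_j)$, and it equals in law the meeting time of two independent $Q$-chains from $x,y$ by the graphical-construction independence. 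So $t_{\rm meet}(C_j)=\E_{x,y}(\tau_{\rm meet})\le\E(\tau_{\rm coal}(C_j))\le t_{\rm coal}$.

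The upper bound $t_{\rm coal}\le e(\log n+2)\sup_j t_{\rm meet}(C_j)$ is the main obstacle. The standard approach: work on a fixed component $C_j$ with $m=|C_j|$ walkers, let $M=\sup_j t_{\rm meet}(C_j)$, and show by a union-bound / second-moment argument that after time $eM$ the number of walkers has, in expectation, dropped by a constant factor — more precisely, for any pair of walkers, by Markov's inequality $\p(\text{this pair has not met by time }eM\mid \text{current config})\le M/(eM)=1/e$, so the expected number of surviving pairs-of-distinct-walkers shrinks, and one controls the number of \emph{walkers} via the inequality $\#\text{pairs}\ge\binom{k}{2}$ when $k$ walkers remain, giving $\E[\text{walkers at time }(i+1)eM]\le 1+\E[\text{walkers at }ieM]/e$ or similar; iterating $\log n+O(1)$ times brings the count to below $2$, hence to $1$. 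The delicate points are (i) getting the right constant $e$ — this comes from using $e^{-1}$ as the per-round contraction factor, which forces the "$e$" prefactor and the "$+2$" slack; (ii) justifying the conditional meeting-time bound uniformly over the (random) positions of the walkers at the start of each round, which is exactly where $t_{\rm meet}(C_j)=\max_{x,y}\E_{x,y}(\tau_{\rm meet})$ with the \emph{max} over starting points is used, together with the strong Markov property at the round boundaries; and (iii) combining over the $k$ components — since $\tau_{\rm coal}=\sup_j\tau_{\rm coal}(C_j)$ and each $|C_j|\le n$, a single $\log n$ suffices for all of them simultaneously by a union bound, or one simply notes $t_{\rm coal}\le\sum_j$ (but that loses a factor $k$, so the union bound inside the tail estimate is the right move). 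I expect step (ii), handling the adaptivity of the walker positions via the strong Markov property cleanly, to be the part requiring the most care, and I would structure it as: fix rounds of length $eM$, condition on $\mathcal F_{ieM}$, apply Markov to each still-distinct pair, sum, and integrate the resulting geometric tail bound on $\tau_{\rm coal}$.
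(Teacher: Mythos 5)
Your handling of the two easier inequalities is sound: $t_{\rm meet}(C_j)\le t_{\rm hit}(C_j)$ is, as you say, \cite[Proposition~14.5]{aldous-fill-2014}, which is exactly what the paper cites; and the lower bound $\sup_j t_{\rm meet}(C_j)\le t_{\rm coal}$ follows from the cluster--trajectory observation you eventually reach (in the graphical construction, the clusters containing the walkers started at $x$ and $y$ each move as $Q$-chains, independently until they merge, so that merging time has the law of $\tau_{\rm meet}$ from $(x,y)$ and is dominated by $\tau_{\rm coal}(C_j)$). Your first sketch of the middle inequality (``the event $\{X_t=y\}$ is contained in $\{X_t=Y_t\}$ only if $Y$ has not moved'') does not hold up, but you fall back on the citation, so that is not a problem.

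The real issue is the upper bound. You propose a pair-halving argument: Markov each round on \emph{all} surviving pairs, iterate until no pairs remain. The paper instead fixes one representative $\operatorname{f}(i)=\min\mathscr{C}(i)$ per component and tracks only the $n$ ``star'' pairs $(i,\operatorname{f}(i))$. Noting $\tau_{\rm coal}\le\max_i\tau^{i,\operatorname{f}(i)}_{\rm meet}$, it applies the exponential tail bound \cite[Equation~(2.20)]{aldous-fill-2014} to each of these $n$ meeting times, union bounds, and integrates. These are genuinely different decompositions, and the difference is quantitative: your approach starts from $\binom{n}{2}\approx n^2/2$ pairs, so the union/Markov bound $\mathbb{P}(\tau_{\rm coal}>i\cdot eM)\le \binom{n}{2}e^{-i}$ integrates to roughly $eM(2\log n+2)$, not the claimed $eM(\log n+2)$. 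The ``$\#\text{pairs}\ge\binom{k}{2}$'' step you invoke to turn pair control into walker control also does not give a clean linear recursion in $\mathbb{E}[\text{walkers}]$, because the map $k\mapsto\binom{k}{2}$ is quadratic. So as written, the proposal proves a bound of the right order but with the wrong constant, and the passage from expected pairs to expected walkers needs repair. The fix is precisely the star idea: replace ``all pairs'' by ``$n$ pairs to a fixed target per component,'' after which your round-by-round Markov iteration is essentially a re-derivation of the exponential tail bound that the paper cites wholesale. The subtlety you flag in (ii) is real but resolvable as you suggest, via the strong Markov property and the fact that, in the graphical construction, two clusters move as independent $Q$-chains until they merge.
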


\begin{remark} Recall that $t_{\rm coal}$ is defined as
\[ t_{\rm coal} = \E_{[n]}\Big( \sup_{j \in [k]} \tau_{\rm coal} (C_j) \Big) \, \]
so the non-standard part of the statement is that we can take the supremum out of the expectation.
For irreducible chains, the statement is 
 Proposition~14.11 in~\cite{aldous-fill-2014}. However, their proof does not really need this extra assumption. For the convenience of the reader, we repeat the proof below.
 Furthermore, we note that for reducible chains the first bound is shown 
 in~\cite{oliveira2012coalescence} without the $\log n$ factor. 
The stronger bound does not hold without the assumption of irreducibility. 
Indeed, 
by looking at a Markov chain with $n$ components of size $2$ each (e.g.\ with transition rates $1$ within these components), it becomes obvious that the factor $\log n$ in the proposition is sharp.
\end{remark}

\begin{proof}
The reversible Markov chain decomposes into irreducible recurrence classes - write $\mathscr{C}(i)$ for the class containing the state $i$. As in the proof of \cite[Proposition 14.11]{aldous-fill-2014}, consider a walker $W^{(i)}$ independently started in $i$. We have $n(n+1)/2$ meeting times
\begin{equation}\label{eq:meeting_ij}
\tau^{i,j}_{\text{meet}}:=\inf
\left\{
t \geq 0 :
W^{(i)}_t=W^{(j)}_t
\right\}
\end{equation}
for the walkers $1 \leq i \leq j \leq n$, where $\inf \emptyset := \infty$ and $\tau^{i,i} = 0$. Define a function $\operatorname{f}$ which maps all elements in a recurrence class $\mathscr{C}(i)$ to a label $\min \mathscr{C}(i)$ which is of lowest index in that component.
\[
\operatorname{f}: i \mapsto \min \mathscr{C}(i)
\]
Then we can say for the random coalescence time,
\[
\tau_{\text{coal}}:=\max_{i=1}^n \tau_{\text{coal}}(\mathscr{C}(i))\leq \max_{i=1}^n \tau^{i,f(i)}_{\text{meet}}.
\]

We then apply a result for the general exponential tails of hitting times of finite Markov chains \cite[Equation 2.20]{aldous-fill-2014}: from arbitrary initial distribution $\mu$ and for a continuous time reversible chain, for any subset $A\subset V$
\[
\mathbb{P}_{\mu}(T_A>t)\leq \exp \left( - \left\lfloor \frac{t}{e \max_v \mathbb{E}_v T_A } \right\rfloor  \right).
\]

For the meeting time variables this leads to
\[
\mathbb{P}(\tau^{i,j}_{\text{meet}}>t)\leq \exp\left( - \left\lfloor \frac{t}{e t_{\text{meet}}} \right\rfloor \right) . 
\]
We can deduce that
\[\mathbb{P}(\tau_{\text{coal}}>t)
\leq \sum_{i=1}^n \mathbb{P}\left(\tau^{f(i),i}_{\text{meet}}>t\right)
\leq n\exp\left( - \left\lfloor \frac{t}{e t_{\text{meet}}} \right\rfloor \right).
\]
Finally, we conclude  as in \cite[Proposition 14.11]{aldous-fill-2014} by integrating  to get
\[
t_{\rm coal}\leq
\int_0^\infty
1 \wedge
\left(
n e \exp\left( - \frac{t}{e t_{\text{meet}}} \right)
\right) {\rm d} t
=
e \left(
2+\log n
\right) t_{\text{meet}}, 
\]
which proves the first claim.

The second claim  of the proposition is \cite[Proposition 14.5]{aldous-fill-2014}. 
\end{proof}

In particular, Proposition~\ref{prop:coal} allows us to to bound the consensus time by bounding either hitting times or meeting times for an irreducible chain. We start by collecting the bounds on hitting times in Section~\ref{ssec:hitting} and continue with the bounds on meeting times in Section~\ref{ssec:meeting}

\subsection{Bounds on hitting times}\label{ssec:hitting}

Throughout the following two sections $X = (X_t)_{t \geq 0}$ will be a reversible, irreducible Markov chain with state space $[n] = \{1, \ldots, n\}$ and transition rates given by a matrix~$Q$. Moreover, we denote by  $\pi = (\pi(i))_{i \in [n]}$ the invariant measure of $X$.

In this case, as there is only one irreducible component the (expected) hitting time is  defined as 
\[ t_{\rm hit} = \max_{k, j \in [n]} \E_k (T_j) . \]
For our bound on the hitting time, we will make use of the well-known correspondence between 
Markov chains and electric networks, see e.g.~\cite{aldous-fill-2014, wilmer2009markov}.
In this context, we associate to $Q$ a graph $G_Q$ with vertex set $[n]$ and connect $i$ and $j$ by an edge, written $i \sim j$, if the conductance $c(ij)$ is nonzero, where the conductance is defined as
\begin{equation}\label{conductance_definition}
c(ij) := \pi(i) Q(i,j) = \pi(j) Q(j,i).
\end{equation}

This is also known as the \emph{ergodic flow} of the edge. Moreover, the interpretation as an electric network lets us define the effective resistance between two vertices $i,j \in [n]$, denoted $\mathcal{R}(i \leftrightarrow j)$, as in \cite[Chapter 9]{wilmer2009markov}.

To state the following proposition, we also define  ${\rm diam}(Q)$ to be the diameter in the graph theoretic 
sense for the graph obtained from $Q$ as above.
The proof uses the representation of the effective resistances in terms of the Markov chains, combined with Thomson's principle.

\begin{proposition}[Conductance bounds]\label{prop:max_resistance}
Let  $(X_t)_{t \geq 0}$ be a reversible, irreducible Markov chain on $[n]$ with associated conductances $c$.
Let $P_{i,j}$ be a path from $i$ to $j$ in $G_Q$ and denote by $E(P_{i,j})$ the set of edges in $P_{i,j}$. Then
\[
\mathbb{E}_i \left( T_j \right) +\mathbb{E}_j \left( T_i \right) \leq \sum_{e \in E(P_{i,j})}\frac{1}{c(e)}.
\]
In particular, we have
\[ t_{\rm hit} \leq {\rm diam} (Q) \max_{i\sim j \in [n]} \frac{1}{c(ij)} . \]
\end{proposition}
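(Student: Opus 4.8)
The plan is to establish the pointwise bound on $\E_i(T_j) + \E_j(T_i)$ first, and then derive the global bound on $t_{\rm hit}$ by optimizing over a shortest path. The starting point is the classical identity linking commute times to effective resistance: for a reversible, irreducible chain with conductances $c$, one has
\[
\E_i(T_j) + \E_j(T_i) = c_G \cdot \mathcal{R}(i \leftrightarrow j),
\]
where $c_G = \sum_{k \sim \ell} c(k\ell)$ is the total conductance (this is the commute-time identity, see e.g.\ \cite[Chapter 10]{wilmer2009markov}; one has to be slightly careful with normalization conventions for continuous-time chains, but the statement holds with $c_G$ the sum of ergodic flows over unordered edges, provided $\pi$ is normalized to be a probability measure). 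So the task reduces to showing $c_G \cdot \mathcal{R}(i \leftrightarrow j) \le \sum_{e \in E(P_{i,j})} 1/c(e)$ for any path $P_{i,j}$.

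For that, I would invoke Thomson's principle (or equivalently the series law together with the fact that adding parallel edges only decreases resistance): the effective resistance $\mathcal{R}(i \leftrightarrow j)$ is at most the resistance of any single path from $i$ to $j$, which by the series law is $\sum_{e \in E(P_{i,j})} r(e)$ where $r(e) = 1/c(e)$ is the edge resistance. Actually, to get the claimed inequality with the factor $c_G$ absorbed, the cleanest route is to use the dual (unit-flow) variational formula directly: route a unit current from $i$ to $j$ along the path $P_{i,j}$, giving a flow of energy $\sum_{e \in E(P_{i,j})} 1/c(e)$, and Thomson's principle bounds $\mathcal{R}(i \leftrightarrow j)$ by this energy. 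Combined with the commute identity, this needs $c_G \le 1$; and indeed $c_G = \sum_{k \sim \ell} \pi(k) Q(k,\ell) \le \sum_k \pi(k) \sum_{\ell} Q(k,\ell)$ — hmm, this is not bounded by $1$ in general. Let me reconsider: the correct normalization is that the commute-time identity reads $\E_i(T_j)+\E_j(T_i) = \big(\sum_k \pi(k)\big) \cdot \big(\sum_{k\sim\ell \text{ ordered}} \text{stuff}\big)^{-1}\cdots$; the safe approach is to verify via the explicit electric-network representation $\E_i(T_j) = \sum_k \pi(k)\big(\text{voltage expressions}\big)$ that the combined quantity $\E_i(T_j)+\E_j(T_i)$ equals exactly the energy dissipated by the unit current flow from $i$ to $j$ at the optimal (harmonic) potential, hence is bounded by the energy of any other unit flow, in particular the path flow along $P_{i,j}$, which gives $\sum_{e \in E(P_{i,j})} 1/c(e)$. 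This avoids any stray constant.

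The second statement then follows immediately: take $i, j$ achieving the maximum in $t_{\rm hit} = \max_{k,j}\E_k(T_j)$, and let $P_{i,j}$ be a shortest path in $G_Q$, so $|E(P_{i,j})| \le \diam(Q)$. Then
\[
t_{\rm hit} \le \E_i(T_j) + \E_j(T_i) \le \sum_{e \in E(P_{i,j})} \frac{1}{c(e)} \le |E(P_{i,j})| \cdot \max_{e} \frac{1}{c(e)} \le \diam(Q)\, \max_{i \sim j} \frac{1}{c(ij)}.
\]
The main obstacle I anticipate is purely bookkeeping: pinning down the precise constant in the commute-time / effective-resistance identity for \emph{continuous-time} reversible chains with the normalization convention used here ($\pi$ a probability measure, $c(ij) = \pi(i)Q(i,j)$), and making sure the unit-flow energy bound from Thomson's principle is applied with matching conventions so that no spurious factor of $c_G$ or $2$ survives. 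Everything else — the series law, Thomson's principle, choosing a geodesic — is standard.
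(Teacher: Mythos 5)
Your approach is correct and is essentially the paper's: the normalization concern you flag resolves exactly as you hope, because with $c(ij)=\pi(i)Q(i,j)$ one has $c(i)=\sum_{j\sim i}c(ij)=\pi(i)q(i)$, so the escape-probability formula $\mathcal{R}(i\leftrightarrow j)=1/\big(c(i)\,\mathbb{P}_i(T_j<T_i^+)\big)$ and the continuous-time identity $\mathbb{E}_i(T_j)+\mathbb{E}_j(T_i)=1/\big(\pi(i)q(i)\,\mathbb{P}_i(T_j<T_i^+)\big)$ combine to give $\mathbb{E}_i(T_j)+\mathbb{E}_j(T_i)=\mathcal{R}(i\leftrightarrow j)$ with no stray factor of $c_G$. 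Thomson's principle (Rayleigh monotonicity) with the series law then yields the path bound, and choosing a geodesic gives the second claim, exactly as you propose.
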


\begin{proof}
Let $T_i^+ = \inf\{ t > 0 \, : \, X_t = i, \lim_{s \uparrow t} X_s \neq i \}$ be the return time to state $i$.
From \cite[Proposition 9.5]{wilmer2009markov}, 
\[
\mathcal{R}(i \leftrightarrow j)=\frac{1}{c(i) \mathbb{P}_i(T_j<T_i^+)}, 
\]
where $c(i) = \sum_{j \sim i}c({ij})$ is the conductance around a vertex. We further have from \cite[Corollary 2.8 (continuous time version)]{aldous-fill-2014}
\[
\mathbb{E}_i \left( T_j \right) +\mathbb{E}_j \left( T_i \right) =\frac{1}{\pi(i)q(i) \mathbb{P}_i(T_j<T_i^+)},
\]
where $q(i) = - Q(i,i)$ is the walker speed at $i$, and by the choice of $c$ these expressions are equal. Finally by Thompson's Theorem (which describes monotonicity of effective resistances with respect to edge resistances)
\[
\begin{split}
\mathbb{E}_i \left( T_j \right) +\mathbb{E}_j \left( T_i \right) ={\mathcal{R}(i \leftrightarrow j)} &\leq {\mathcal{R}(i \leftrightarrow j \text{ through } P_{i,j})}\\
&= \sum_{\{u,v\} \in E(P_{i,j})}\frac{1}{c(uv)},
\end{split}
\]
which gives the required bound.
\end{proof}

\subsection{Bounds on meeting times}\label{ssec:meeting}

In this section, we continue to use the notation from the beginning of Section~\ref{ssec:hitting}. In particular, $(X_t)_{t \geq 0}$ is  a reversible, irreducible Markov chain on $[n]$ with transition rates given by $Q$ and 
invariant measure $\pi$.

It will often be easier to work with the (expected) meeting time when both chains are started in 
the invariant measure, i.e.\ we define
\[ t_{\rm meet}^\pi :=  \sum_{i, j \in [n]} \pi(i) \pi(j) \E_{i,j}(\tau_{\rm meet}) .\]
In order to make the connection to $t_{\rm meet}$, we will need the time it takes to reach stationarity. There are competing definitions of the distance from stationarity, both of which are required to apply the literature results.

\begin{definition}\label{stat_dist}
For a Markov chain on $[n]$
\[
d(t):=\frac{1}{2}\max_{x \in [n]}\| p^{(t)}_{x,\cdot}-\pi(\cdot) \|_1,
\]
\[
\bar{d}(t):=\frac{1}{2}\max_{x,y \in [n]}\| p^{(t)}_{x,\cdot}-p^{(t)}_{y,\cdot} \|_1.
\]
\end{definition}

The mixing time $t_{\rm mix}$ is then defined as 
\[
t_{\text{\textnormal{mix}}}:=\min \Big\{ t\geq 0 : d(t)\leq \frac{1}{4}\Big\},
\]
and the mixing time from a point $i \in [n]$ as
\[
t_{\text{\textnormal{mix}}}(i):=\min \Big\{ t\geq 0 : \big\|p_{i\cdot}^{(t)}-\pi \big\|_1 \leq \frac{1}{2}\Big\}.
\]
Closely related to the mixing time is the relaxation time
\[ 
t_{\text{\textnormal{rel}}}:=\max \left\{ \frac{1}{\lambda} : \lambda \text{ a positive eigenvalue of } -Q \right\} \]
and we describe the relationship in the following lemma, as standard references are either in discrete time or using different definitions. 

\begin{lemma}\label{mixing_and_relaxation}
\[
t_{\text{\textnormal{mix}}} \geq \frac{t_{\text{\textnormal{rel}}}}{1+\frac{1}{\log 2}}.
\]
\end{lemma}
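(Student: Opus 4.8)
## Proof Strategy for Lemma \ref{mixing_and_relaxation}

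The plan is to exploit the spectral decomposition of the reversible generator $Q$ to lower-bound the mixing time by the relaxation time, following the standard discrete-time argument (e.g.\ \cite[Theorem 12.5]{wilmer2009markov}) but adapted to continuous time and to the $L^1$-based definition of $d(t)$ used in Definition~\ref{stat_dist}. Since $-Q$ is self-adjoint with respect to the inner product $\langle f, g\rangle_\pi = \sum_i \pi(i) f(i) g(i)$, it has a real nonnegative spectrum $0 = \lambda_1 < \lambda_2 \le \cdots \le \lambda_n$ with an orthonormal (in $\langle\cdot,\cdot\rangle_\pi$) eigenbasis $f_1 \equiv 1, f_2, \ldots, f_n$. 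Write $\lambda_* = 1/t_{\rm rel} = \lambda_2$ for the spectral gap, and let $f_*$ be a corresponding eigenfunction.

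First I would express the transition density deviation spectrally: $\frac{p^{(t)}_{x,y}}{\pi(y)} - 1 = \sum_{\ell \ge 2} e^{-\lambda_\ell t} f_\ell(x) f_\ell(y)$. Testing against the signed measure $\nu(y) = \pi(y) f_*(y)$ (so $\sum_y \nu(y) = 0$ and $\sum_y |\nu(y)| \le \|f_*\|_{2,\pi} = 1$ by Cauchy--Schwarz, since $\sum_y \pi(y) |f_*(y)| \le (\sum_y \pi(y) f_*(y)^2)^{1/2}$), one gets
\[
e^{-\lambda_* t} |f_*(x)| = \Big| \sum_y \big(p^{(t)}_{x,y} - \pi(y)\big) f_*(y) \Big| \le \|f_*\|_\infty \sum_y \big|p^{(t)}_{x,y} - \pi(y)\big| = 2 \|f_*\|_\infty \cdot \|p^{(t)}_{x,\cdot} - \pi\|_1 / 2 \cdot \tfrac{1}{\|f_*\|_\infty},
\]
—more cleanly, choosing $x$ to maximize $|f_*(x)|$ and dividing, $e^{-\lambda_* t} \le 2 d(t)$. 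At $t = t_{\rm mix}$ this yields $e^{-\lambda_* t_{\rm mix}} \le 1/2$, hence $t_{\rm mix} \ge \log 2 / \lambda_* = (\log 2)\, t_{\rm rel}$.

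This already gives a bound of the claimed shape but with the wrong constant ($(\log 2)\,t_{\rm rel}$ rather than $t_{\rm rel}/(1+\tfrac{1}{\log 2})$); since $\log 2 < 1 < 1/(1+\tfrac1{\log 2})$ is false — in fact $\log 2 \approx 0.693$ while $1/(1+1/\log 2) \approx 0.409$, so $\log 2 > 1/(1+1/\log 2)$ and the crude bound is actually \emph{stronger}. Thus the subadditivity refinement is not even needed: the claimed inequality follows a fortiori. To be safe I would nonetheless present the self-contained spectral argument above, then remark that $\log 2 \ge 1/(1 + 1/\log 2)$ (equivalently $(\log 2)^2 + \log 2 \ge 1$, i.e.\ $(\log 2 + \tfrac12)^2 \ge \tfrac54$, which holds since $\log 2 + \tfrac12 > 1.19 > \sqrt{1.25}$). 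The main obstacle is purely bookkeeping: ensuring the continuous-time spectral identity and the normalization of the test eigenfunction are stated correctly with the $\tfrac12\|\cdot\|_1$ convention of Definition~\ref{stat_dist}, and confirming the constant comparison so that the stated (weaker) form is genuinely implied.
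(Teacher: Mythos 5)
Your proof is correct, and it is a genuinely different route from the paper's. The paper works with the $\bar{d}$-distance: from $d(t_{\rm mix}) \le \tfrac14$ it deduces $\bar{d}(t_{\rm mix}) \le \tfrac12$, then uses submultiplicativity of $\bar{d}$ to get $\bar{d}(Ct_{\rm mix}) \le 2^{1-C}$, takes $C = 1 + 1/\log 2$ so that $\bar{d}(Ct_{\rm mix}) \le e^{-1}$, and finally imports \cite[Lemma 4.23]{aldous-fill-2014} to conclude $Ct_{\rm mix} \ge t_{\rm rel}$. Your argument instead tests $p^{(t)}_{x,\cdot}-\pi$ directly against a spectral-gap eigenfunction $f_*$, and using orthogonality of $f_*$ to $\mathbf 1$ obtains the pointwise bound $e^{-t/t_{\rm rel}} \le 2d(t)$, hence $t_{\rm mix} \ge (\log 2)\, t_{\rm rel}$. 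This is self-contained (no need for submultiplicativity of $\bar{d}$ or the external lemma) and gives the strictly stronger constant $\log 2 \approx 0.693$ versus the paper's $1/(1+1/\log 2)\approx 0.409$; the paper's version buys brevity by delegating the spectral step to the literature. Two small cosmetic points: the last equality in your long display is garbled (the intended content is simply $\|f_*\|_\infty\sum_y|p^{(t)}_{x,y}-\pi(y)| = \|f_*\|_\infty\,\|p^{(t)}_{x,\cdot}-\pi\|_1 \le 2\|f_*\|_\infty d(t)$, which your ``more cleanly'' line states correctly), and the parenthetical equivalence ``$(\log 2)^2+\log 2\ge 1$'' is not actually equivalent to $\log 2 \ge 1/(1+1/\log 2)$ --- the latter simplifies to $\log 2 + 1\ge 1$, which is trivially true --- but since both inequalities hold this does not affect the conclusion.
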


\begin{proof}

By the previous definitions of the mixing time and stationarity distances, we have that if 
$d(t_{\text{\textnormal{mix}}})  = \frac{1}{4}$, then it follows that  $\bar{d}(t_{\text{\textnormal{mix}}})\leq \frac{1}{2}$. 
Therefore, by the submultiplicativity of $\bar{d}$ shown in \cite{aldous-fill-2014} for any $C\geq 1$ we have 
\[
\bar{d}(C t_{\text{\textnormal{mix}}})\leq 2^{-\lfloor C\rfloor}\leq 2^{1-C}.
\] 
The right hand side is less than $e^{-1}$ when $
C=1+\frac{1}{\log 2}$. Therefore, by  \cite[Lemma 4.23]{aldous-fill-2014} we can deduce that
$
C t_{\text{\textnormal{mix}}} \geq t_{\text{\textnormal{rel}}}
$
and the claim follows.
\end{proof}

\begin{proposition}
\begin{enumerate}[label={(\alph*)},ref={\theproposition~(\alph*)}]
	\item \label{lower_meeting_bound}
\[
t_{\text{\textnormal{meet}}}^{\pi} \geq \frac{(1-\sum_{i \in [n]} \pi(i)^2)^2}{4\sum_{i \in [n]} q(i) \pi(i)^2}, 
\]
where $q(i) = - Q(i,i)$.
\item \label{conductance_theorem} 
There exists an absolute constant $c_{\rm cond} > 0$ such that
\[
 t_{\rm{meet}} \geq 
c_{\rm cond} \left(
\max_{A \subset [N]}
\frac{\pi(A) \pi(A^c)}{\sum_{x \in A} \sum_{y \in A^c}c(xy)}
\right).
\]
\end{enumerate}
\end{proposition}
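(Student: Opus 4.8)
My plan is to treat the two parts separately, since (a) is a variance/Dirichlet-form computation and (b) is a conductance (isoperimetric) bound, both classical in spirit but needing continuous-time reversible-chain formulations.

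\emph{Part (a).} The idea is to track the indicator that the two independent copies $X,Y$ (both started from $\pi$, hence each marginally stationary) have \emph{not yet met}, or rather to use a martingale/Dirichlet-form argument on a suitable test function. Concretely, I would consider the product chain $(X_t,Y_t)$ on $[n]^2$, which is reversible with respect to $\pi\otimes\pi$, and look at the function $g(x,y)=\1\{x=y\}$, or equivalently work with $f_t(x,y) = \p_{x,y}(\tau_{\rm meet} > t)$. The key computation is to bound the rate at which the probability of being ``merged'' can grow: the generator of the product chain applied to the diagonal set picks up, at a merged state $(x,x)$, total outflow rate $2q(x)$, so the instantaneous rate of leaving the diagonal is controlled by $\sum_x q(x)\pi(x)^2$ (the $\pi\otimes\pi$-mass of the diagonal weighted by escape speed). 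Meanwhile the diagonal has total mass $\sum_x \pi(x)^2$, so the quantity $1 - \sum_x\pi(x)^2$ measures how far a typical pair starts from being merged. Turning this into the stated bound: let $p(t) = \p_{\pi\otimes\pi}(X_t = Y_t)$; one shows $p(t) \le \sum_x \pi(x)^2 + t\cdot(\text{something} \le \sum_x q(x)\pi(x)^2)$ is not quite it — rather I would use the standard trick of comparing $\E_{\pi,\pi}(\tau_{\rm meet}\wedge t)$ with the occupation measure of the diagonal and optimize over $t$, or invoke the general lower bound $t_{\rm meet}^\pi \ge (1-\pi_{\max})^2/(4 \cdot \text{diagonal escape rate})$ type inequality as in the hitting-time-of-a-set literature (analogue of $\E_\pi T_A \ge (1-\pi(A))^2 / \text{(flow into }A)$, here applied to the product chain hitting the diagonal). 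The Cauchy–Schwarz step producing the square $(1-\sum\pi(i)^2)^2$ and the factor $4$ should come out of this occupation-time/second-moment argument.

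\emph{Part (b).} This is the continuous-time reversible analogue of the Cheeger-type lower bound on hitting (and hence meeting) times. For any set $A \subset [n]$, consider starting $X$ from $\pi$ conditioned on $A$ and $Y$ from $\pi$ conditioned on $A^c$ (or use a coupling/averaging argument over the randomness of the starting configuration drawn from $\pi\otimes\pi$ restricted to $A\times A^c$): before $X$ and $Y$ can meet, at least one of them must cross the edge boundary between $A$ and $A^c$. The total conductance across the cut is $\sum_{x\in A}\sum_{y\in A^c} c(xy)$, and the rate at which the pair, started with this product-of-conditionals law, can cross is at most (a constant times) $\sum_{x\in A,y\in A^c}c(xy)/\pi(A) + \sum c(xy)/\pi(A^c) = \sum c(xy)\cdot\frac{1}{\pi(A)\pi(A^c)}$; hence the expected crossing time — and a fortiori the meeting time from this law, which is in turn $\le t_{\rm meet}$ after I bound $\E_{\pi,\pi}$ by the worst-case $\max_{x,y}$ — is at least $c_{\rm cond}\,\pi(A)\pi(A^c)/\sum_{x\in A,y\in A^c}c(xy)$. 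Then take the max over $A$. The cleanest route is probably: the meeting time dominates the time for the \emph{first} of two independent walkers (one in $A$, one in $A^c$ under the conditional stationary laws) to exit its respective side, and a single reversible chain started from $\pi(\cdot\mid A)$ takes expected time $\ge c\,\pi(A)/Q(A,A^c)$ to leave $A$, where $Q(A,A^c)=\sum_{x\in A,y\in A^c}c(xy)$ is the ergodic flow out of $A$ — this last fact is the standard bottleneck bound (e.g. via the relaxation-time/Dirichlet-form comparison $\E_{\pi|_A} T_{A^c} \ge \pi(A)^2/Q(A,A^c)$ up to constants, or Proposition 14.5 / conductance arguments in \cite{aldous-fill-2014}).

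\emph{Main obstacle.} The conceptual content is standard, so the real work is bookkeeping: getting the \emph{absolute constants} ($4$ in (a), $c_{\rm cond}$ in (b)) right in continuous time with the paper's normalizations, and — more delicately — passing from the $\pi$-averaged or $\pi|_A$-conditioned meeting time to $t_{\rm meet} = \max_{x,y}\E_{x,y}\tau_{\rm meet}$, since the latter only dominates an \emph{average} when the worst case is at least the average, which is immediate ($t_{\rm meet}\ge t_{\rm meet}^\pi$ and $t_{\rm meet}\ge$ any conditional average). I expect part (b) to be the harder of the two, because the bottleneck argument for meeting times (two moving walkers) is slightly more subtle than for hitting a fixed set, and one must be careful that ``at least one walker crosses the cut before they meet'' is genuinely what is needed and that its expected duration is correctly lower-bounded by the single-walker bottleneck estimate.
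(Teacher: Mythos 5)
For Part~(a) the paper offers no proof of its own; it simply cites Remark~3.5 of \cite{cox2016convergence}. Your plan --- treat the diagonal $D=\{(x,x)\}$ as a subset of the product chain $(X_t,Y_t)$ on $[n]^2$ (which is reversible with respect to $\pi\otimes\pi$), note $\pi\otimes\pi(D)=\sum_i\pi(i)^2$, identify the escape rate from $D$ as being of order $\sum_i q(i)\pi(i)^2$, and then use an occupation-time / second-moment lower bound on the hitting time of $D$ in the form $\E_\pi T_D \geq (1-\pi(D))^2/(\text{flow into }D)$ --- is precisely the standard derivation of this inequality, so your approach is fine there.

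For Part~(b) your route is genuinely different from the paper's and, as written, has a gap. The paper never runs a direct two-walker bottleneck argument; instead it chains three known facts: the coupling bound $d(t)\leq \max_{i,j}\p(\tau_{\rm meet}^{i,j}>t)$ plus the exponential-tail bound \cite[Eq.~(2.20)]{aldous-fill-2014} gives $\tfrac14 t_{\rm mix} \leq e^2 t_{\rm meet}$; then $t_{\rm rel}\leq C\,t_{\rm mix}$ (Lemma~\ref{mixing_and_relaxation} in the paper); and finally the Cheeger-type bound $\max_A \pi(A)\pi(A^c)/Q(A,A^c)\leq t_{\rm rel}$ from \cite[Cor.~4.37]{aldous-fill-2014}. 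That is entirely a ``meeting $\gtrsim$ mixing $\gtrsim$ relaxation $\geq$ conductance'' argument, with no two-walker geometry at all.

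Your proposed direct argument --- start $X\sim\pi(\cdot\mid A)$, $Y\sim\pi(\cdot\mid A^c)$, observe that before meeting one of them must cross the cut, and then apply a one-walker bottleneck estimate --- does not close as stated. Knowing $\E_{\pi|_A}[T_{A^c}]\gtrsim\pi(A)/Q(A,A^c)$ and the analogous bound for the other walker does \emph{not} give a lower bound on $\E[\min(T_1,T_2)]$: the expectation of a minimum is not controlled from below by lower bounds on the individual expectations. The clean way to rescue your idea is to run the bottleneck bound once, on the \emph{product} chain, for the set $A\times A^c\subset[n]^2$: its $\pi\otimes\pi$-measure is $\pi(A)\pi(A^c)$, and its ergodic flow out is $\pi(A^c)Q(A,A^c)+\pi(A)Q(A^c,A)=Q(A,A^c)$ by reversibility, so the expected exit time of $A\times A^c$ (which is $\leq\tau_{\rm meet}$ from that initial law, which is in turn $\leq t_{\rm meet}$) is $\Omega\bigl(\pi(A)\pi(A^c)/Q(A,A^c)\bigr)$ directly. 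With that one modification your argument becomes a valid and arguably more elementary alternative to the paper's mixing-time route; as you wrote it, however, there is a missing step.
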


\begin{proof}
For Part (a) see Remark 3.5  in~\cite{cox2016convergence}.

(b) From the standard coupling bound for mixing times seen in \cite[Theorem 9.2]{aldous-fill-2014} and with $\tau_{\rm meet}^{ij}$ as in~\eqref{eq:meeting_ij},
\[
d(t)
\leq \max_{i,j} \mathbb{P}\left( \tau_{\text{meet}}^{i,j}>t \right)
\leq \exp\left( - \left\lfloor \frac{t}{e t_{\text{meet}}} \right\rfloor \right)
\leq \exp\left(1 - \frac{t}{e t_{\text{meet}}} \right)
\]
where the second inequality is from \cite[Equation (2.20)]{aldous-fill-2014}.
So by integrating
\begin{equation}\label{mixing_and_meeting}
\frac{1}{4} t_{\text{mix}}\leq \int_0^{\infty} d(t) {\rm d} t \leq e^2 t_{\text{meet}} . 
\end{equation}

Because $c(xy)=\pi(x)Q(x,y)$, by \cite[Corollary 4.37]{aldous-fill-2014},
\[
\max_{A \subset [N]}
\frac{\pi(A) \pi(A^c)}{\sum_{x \in A} \sum_{y \in A^c}c(xy)} \leq t_{\text{\textnormal{rel}}}.
\]

Combining this with Equation \eqref{mixing_and_meeting} and Lemma \ref{mixing_and_relaxation} proves the claim.
\end{proof}

\cite[Corollary 1.2]{peres2017intersection} has the consequence that, for some universal $C>0$, $t_{\rm mix} \leq C \min_i t_{\rm hit}(i)$. We present a simple proof of this fact for the convenience of the reader and to give an explicit constant factor.

\begin{lemma}\label{mixing_below_central_hitting} For any $i \in [n]$,
\[
t_{\text{\textnormal{mix}}}(i)\leq 2\mathbb{E}_{\pi}\left( T_i \right).
\]
\end{lemma}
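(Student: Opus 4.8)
The plan is to bound the total variation distance from $\pi$ of the chain started at $i$ directly via a coupling with the stationary chain, using the hitting time $T_i$ as the coupling time. First I would set up the standard coupling: run one copy $X$ of the chain from the point $i$ and an independent copy $Y$ from the stationary distribution $\pi$, and let them evolve independently until the first time $\sigma$ that $Y$ visits $i$; from that moment on, couple them so that they move together. This is a valid coupling (since $X_0 = i = Y_\sigma$ and $X$ is Markov), and by the coupling inequality $\|p^{(t)}_{i,\cdot} - \pi\|_{\rm TV} \le \p(\sigma > t) = \p_\pi(T_i > t)$.

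Next, I would integrate the tail bound. Since $t_{\rm mix}(i)$ is defined via the $\ell_1$ distance being at most $\tfrac12$, i.e.\ $\|p^{(t)}_{i,\cdot} - \pi\|_{\rm TV} \le \tfrac14$, it suffices to find $t$ with $\p_\pi(T_i > t) \le \tfrac14$. By Markov's inequality $\p_\pi(T_i > t) \le \E_\pi(T_i)/t$, so taking $t = 4\E_\pi(T_i)$ would already give $\p_\pi(T_i > t)\le \tfrac14$ and hence $t_{\rm mix}(i) \le 4 \E_\pi(T_i)$ — but that only yields the constant $4$, not the claimed $2$. To sharpen to the factor $2$, I would instead use the exponential tail bound for hitting times from \cite[Equation (2.20)]{aldous-fill-2014} that is already invoked in the excerpt, namely $\p_\mu(T_A > t) \le \exp(-\lfloor t/(e\max_v \E_v T_A)\rfloor)$, combined with the mean-hitting-time identity; alternatively, one can use the well-known fact that under $\pi$ the hitting time $T_i$ is (stochastically dominated by, or exactly for the right normalization) related to a mean-$\E_\pi(T_i)$ quantity via the identity $\E_\pi(T_i) = \frac{1}{\pi(i) q(i) \p_i(T_i^+ < T_i^+)}$-type reversibility relations, giving a cleaner exponential tail $\p_\pi(T_i > t) \le \exp(-t/\E_\pi(T_i))$ up to the relevant constants. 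Choosing $t = 2\E_\pi(T_i)$ then forces the TV distance below $\tfrac14$ and yields $t_{\rm mix}(i) \le 2\E_\pi(T_i)$.

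The main obstacle is pinning down the exact numerical constant $2$: the crude Markov inequality gives $4$, so the improvement relies on the stronger (near-)exponential decay of $\p_\pi(T_i > t)$, which in turn uses reversibility and the fact that starting from stationarity the hitting time has a decreasing-hazard / exponential-type profile. I would handle this by quoting the exponential tail estimate already used elsewhere in the excerpt (the $\lfloor t/(e\,\E_v T_A)\rfloor$ bound) applied with the starting distribution $\mu = \pi$ and $A = \{i\}$, and then checking that at $t = 2\E_\pi(T_i)$ the resulting bound is at most $\tfrac14$; a short computation with the floor function and the constant $e$ should close the gap, possibly after noting that $\max_v \E_v T_i$ and $\E_\pi T_i$ differ by a controlled factor via the random-target-type identities. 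The rest is routine: assemble the coupling inequality, the tail bound, and the definition of $t_{\rm mix}(i)$.
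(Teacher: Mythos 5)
Your approach is fundamentally different from the paper's, but it has a genuine gap at its core. The coupling you describe is not well defined: you run $X$ from $i$ and $Y$ from $\pi$ independently until the first time $\sigma$ that $Y$ hits $i$, and then "couple them so that they move together" --- but at time $\sigma$ we have $Y_\sigma = i$ while $X_\sigma$ is some other (random) state, so there is no way to make the two trajectories agree from time $\sigma$ onward without destroying one of the two marginal laws. Consequently, the inequality you write, $\|p^{(t)}_{i,\cdot}-\pi\|_{\rm TV}\le \p_\pi(T_i>t)$, is not established by this coupling. (It may in fact be true for reversible chains, but it requires a genuinely different argument than the one you sketch.)

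Even setting that aside, you correctly observe that Markov's inequality only yields $t_{\rm mix}(i)\le 4\E_\pi(T_i)$, not the factor $2$. The sharpening you propose --- invoking the exponential tail bound \cite[Eq.\ (2.20)]{aldous-fill-2014} --- does not close the gap: that bound is in terms of $\max_v \E_v(T_i)$, not $\E_\pi(T_i)$, and the ratio of these two quantities can be large, so the resulting inequality would have $t_{\rm hit}(i)$ rather than $\E_\pi(T_i)$ on the right. Your suggestion to "handle this with random-target-type identities" is where a real new idea would be needed, and you don't supply one.

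The paper's proof takes a different, $L^2$ route that avoids coupling entirely. It bounds $\|p^{(t)}_{i\cdot}-\pi\|_1$ by the $\chi^2$-type distance $\|p^{(t)}_{i\cdot}/\pi - 1\|_\pi$ via Cauchy--Schwarz, uses reversibility to rewrite the latter as $\sqrt{p^{(2t)}_{ii}/\pi(i) - 1}$, and then invokes the identity $\E_\pi(T_i)=\int_0^\infty\bigl(p^{(s)}_{ii}/\pi(i)-1\bigr)\,{\rm d}s$ together with the monotone decrease of the integrand $s\mapsto p^{(s)}_{ii}/\pi(i)-1$ to get $p^{(2t)}_{ii}/\pi(i)-1 \le \E_\pi(T_i)/(2t)$. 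Setting $t=2\E_\pi(T_i)$ then makes the $\ell_1$ distance at most $1/2$ directly, producing the constant $2$ with no coupling and no tail estimate.
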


\begin{proof}
Let $i \in [n]$, then by Cauchy-Schwarz we have that
\[ \left\|p_{i\cdot}^{(t)}-\pi\right\|_1^2 = \sum_{j \in [n]} \Big| \frac{p_{i j}^{(t)}}{\pi(j)} - 1\Big| \pi(j) 
\leq \bigg( \sum_{i \in [n]} \Big| \frac{p_{i j}^{(t)}}{\pi(j)} - 1\Big|^2 \pi(j) \bigg)^2 = 
\Big\|\frac{p_{i\cdot}^{(t)}}{\pi}-1\Big\|_\pi^2.
\]

To simplify the right hand side, we use reversibility to obtain
\[
\Big\|\frac{p_{i\cdot}^{(t)}}{\pi}-1\Big\|^2_\pi
=-1+\sum_j \frac{\left( p_{ij}^{(t)} \right)^2}{\pi(j)}
=-1+\frac{1}{\pi(i)}\sum_j p_{ij}^{(t)}p_{ji}^{(t)}
=-1+\frac{p_{ii}^{(2t)}}{\pi(i)}.
\]

Now, by \cite[Lemma 2.11]{aldous-fill-2014}, we have that for any $t \geq 0$,
\[
\begin{split}
\mathbb{E}_\pi \left( T_i \right) &= \int_0^\infty \left( -1+\frac{p_{ii}^{(s)}}{\pi(i)} \right) {\rm d}s 
\geq 2 t \left( -1+\frac{p_{ii}^{(2t)}}{\pi(i)} \right), 
\end{split}
\]
because the integrand is non-increasing \cite[Equation 3.40]{aldous-fill-2014}.
Combining these inequalities, we have for $t > 0$,
\begin{equation}\label{l1_bound}
\Big\|p_{i\cdot}^{(t)}-\pi\Big\|_1 \leq
\Big\|\frac{p_{i\cdot}^{(t)}}{\pi}-1\Big\|_\pi \leq
\sqrt{\frac{\mathbb{E}_\pi \left( T_i \right)}{2 t}} . 
\end{equation}
Hence, if $t$ is such that
\[
\frac{\mathbb{E}_\pi \left( T_i \right)}{2 t}\leq \frac{1}{4}, 
\]
then we can deduce that $t_{\rm mix}(i) \leq t$, which completes the proof.
\end{proof}

\begin{proposition}\label{corr:strong_stationary_time}
For two independent copies $(X_t)_t$ and  $(Y_t)_t$ of a Markov Chain on $[n]$, and any state $s \in [n]$, we find
\[
t_{\rm mix} \leq 16 \, t_{\text{\textnormal{hit}}}(s)
\]
and further we can construct a time for the product chain with
\[
\mathbb{E}_{x,y}(\cS)\leq 188  \,  t_{\rm hit}(s)
\]
which is a strong stationary time in the sense that for any $t \geq 0$ we have
$
\mathcal{L}(X_{t+\mathcal{S}},Y_{t+\mathcal{S}})=\pi \otimes \pi
$
and, further, $(X_{t+S},Y_{t+S})_{t \geq 0}$ and $\mathcal{S}$ are independent.
\end{proposition}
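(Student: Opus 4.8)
The plan is to build $\cS$ in two stages: first drive a single chain to stationarity using a hitting-time-based strong stationary time, then do the same for the second chain, and combine. First I would record the mixing bound $t_{\rm mix} \leq 16\, t_{\rm hit}(s)$. By Lemma~\ref{mixing_below_central_hitting} we have $t_{\rm mix}(s) \leq 2\,\E_\pi(T_s) \leq 2\, t_{\rm hit}(s)$, and by the submultiplicativity of $d(\cdot)$ together with the standard comparison $d(t) \leq \bar d(t) \leq 2 d(t)$, one upgrades the single-point mixing bound at $s$ to a genuine mixing-time bound: a bounded number of ``restart from $s$'' epochs forces $d$ below $1/4$. Concretely, after reaching $s$ the chain is within $\ell^1$-distance $\tfrac12$ of $\pi$, and iterating a constant number of such phases (each of expected length at most $2\,t_{\rm hit}(s)$ until the hit and then a further $t_{\rm mix}(s)$) brings $d$ below $1/4$; tracking constants carefully yields $t_{\rm mix} \leq 16\, t_{\rm hit}(s)$.

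Next I would construct the strong stationary time itself. The cleanest route is the filling rule / Aldous--Diaconis construction of a strong stationary time $\cS_X$ for a single chain: it is classical that there is a strong stationary time whose expectation is bounded by a universal constant times $t_{\rm mix}$ (e.g.\ via $\cS_X \leq$ a geometric number of mixing-blocks, using $d(k\, t_{\rm mix}) \leq 2^{-k}$ and a coupling-from-stationarity argument), so $\E_x(\cS_X) \leq c_1 t_{\rm mix} \leq 16 c_1\, t_{\rm hit}(s)$ for an explicit $c_1$. Running this for $X$ gives a time $\cS_X$ after which $X_{\cS_X + \cdot}$ is stationary and independent of $(\cS_X, X|_{[0,\cS_X]})$. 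Then, on the event-by-event level, starting the construction for $Y$ afresh at time $\cS_X$ (using the independence of the two chains and the Markov property) produces a further time $\cS_Y$ with $\E(\cS_Y) \leq 16 c_1 t_{\rm hit}(s)$, and $\cS := \cS_X + \cS_Y$ has the desired property: at any $t \geq 0$, $X_{t+\cS}$ is stationary (since $t + \cS_Y \geq 0$ past the $X$-strong-stationary time) and $Y_{t+\cS}$ is stationary by construction, they are independent of each other and of $\cS$. Summing the two expectation bounds and being slightly generous with the universal constant in the single-chain construction gives $\E_{x,y}(\cS) \leq 188\, t_{\rm hit}(s)$.

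The main obstacle is pinning down the explicit constants and, more substantively, making sure the strong stationary time for $Y$ can be honestly appended after $\cS_X$: one must check that after time $\cS_X$ the pair $(X,Y)$ still consists of two independent Markov chains (one already in $\pi$, one in an arbitrary law), so that the construction for $Y$ applied on $[\cS_X, \infty)$ is legitimate and its output is independent of everything generated before — this uses the independence of the driving randomness of the two chains plus the strong Markov property at $\cS_X$, and the fact that the first stage only used $X$'s randomness. A secondary technical point is that the single-chain strong-stationary-time bound in terms of $t_{\rm mix}$ is usually stated in discrete time; I would either cite a continuous-time version or reduce to discrete time by looking at the chain at integer multiples of $t_{\rm mix}$ and absorbing the resulting loss into the constant. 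Everything else is bookkeeping with the inequalities $t_{\rm mix}(s) \leq 2 t_{\rm hit}(s)$, submultiplicativity of $\bar d$, and $\E_\pi(T_s) \leq t_{\rm hit}(s)$.
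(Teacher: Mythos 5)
Your proposal is correct and matches the paper's proof in all essentials: bound $t_{\rm mix}$ by first hitting $s$ and then invoking Lemma~\ref{mixing_below_central_hitting} to control the $\ell^1$-distance from $s$; use Fill's theorem (the Aldous--Diaconis separation-distance machinery, not a plain coupling argument --- a coupling time is not automatically a strong stationary time, so the ``coupling-from-stationarity'' gloss in your parenthetical is the one imprecise phrase) to obtain a single-chain strong stationary time of expectation $O(t_{\rm hit}(s))$; and combine two such times for the product chain. The only cosmetic divergences are that the paper achieves the mixing bound in a single tuned phase ($M_1=8t_{\rm hit}(s)$ via Markov on $T_s$, then $M_2=8\,\mathbb{E}_\pi(T_s)$ via the $\ell^1$-bound from $s$) rather than by iterating, and combines the two single-chain stationary times as $\tilde{\cS}_X\vee\tilde{\cS}_Y$ rather than your sequential sum $\cS_X+\cS_Y$ --- since one estimates $\mathbb{E}(\max)$ by the sum of expectations anyway, the constant $188$ comes out the same.
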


\begin{proof}
Define the time $M_1 = 8 t_{\rm hit}(s)$. Then, by Markov's inequality
\[
\max_x \mathbb{P}_x (T_s \geq M_1)
\leq
\max_x \frac{\mathbb{E}_x (T_s)}{M_1}
=
\frac{t_{\text{\textnormal{hit}}}(s)}{M_1}
=
\frac{1}{8}
\]
so we will hit $s$ in the timeframe $[0,M_1]$ with probability at least $\frac 78$. Define also the time $M_2$ by
\[
\frac{\mathbb{E}_\pi \left( T_s \right)}{2 M_2} = \frac{1}{16}
\]
then by recalling equation \eqref{l1_bound} we have that
\[ \frac{1}{2} \| p_{s,\cdot}^{(M_2)}  - \pi(\cdot) \|_1 \leq \frac{1}{8}. \]
Hence, by distinguishing the cases of hitting $s$ by $M_1$, or not, we obtain that $d(M_1 + M_2) \leq \frac{1}{4}$. 
Thus, 
\[
t_{\text{\textnormal{mix}}}
\leq
M_1+M_2
=
8 \, t_{\text{\textnormal{hit}}}(s)+8  \, \mathbb{E}_\pi \left( T_s \right)
\leq
16 \,  t_{\text{\textnormal{hit}}}(s).
\]

It remains to prove the second claim. By Theorem 1.1 in \cite{fill1991time} we can construct a strong stationary time with
\[
\mathbb{P}_s\left(\cS_X>t\right)=\operatorname{sep}_s(t)=:1-\min_{j \in [n]} \frac{p^{(t)}_{s, j}}{\pi(j)}.
\]
Then we recover several definitions and results from \cite{aldous-fill-2014} given in Definition \ref{stat_dist}.
These various definitions of distance from stationarity satisfy
\[
\operatorname{sep}_s(2t)\leq\max_{v \in [N]}\operatorname{sep}_v(2t)<2 \, \bar{d}(t)\leq 4 \,  d(t) . 
\]
Therefore, we have that
\[
\tau_1:=\min \left\{ t: \bar{d}(t) \leq \frac{1}{2}\right\}\leq \min \left\{ t: d(t) \leq \frac{1}{4}\right\} = t_{\text{mix}} .
\]
Then we use that $\bar{d}$ is submultiplicative to obtain
\[
\bar{d}(t)\leq 2^{-\lfloor t/\tau_1 \rfloor }\leq 2^{-\lfloor t/t_{\text{mix}} \rfloor }.
\]
Thus, we can bound the expectation of the time to stationarity
\[
\mathbb{E}_s \left(\cS_X\right)
=2\int_0^\infty \operatorname{sep}_s(2t) {\rm d}t
\leq 4 \int_0^\infty 2^{1- t/t_{\text{mix}}} {\rm d}t
=\frac{8 \,  t_{\text{mix}}}{\log 2}
\leq \frac{64 \,  t_{\text{\textnormal{hit}}}(s)}{\log 2}.
\]

This becomes a strong stationary time for $(X_t)_t$ with $X_0=x$ by constructing another time $\tilde{\cS}_X$ which simply waits for the event 
when the walker hits $s$, and then waits for $\cS_X$. Thus
\[
\mathbb{E}_x\left(\tilde{\cS}_X \right)\leq t_{\rm hit}(s)
+
\frac{64  \, t_{\text{\textnormal{hit}}}(s)}{\log 2}
<
94 \,  t_{\rm hit}(s).
\]

We construct the symmetric time $\tilde{\cS}_Y$ for $(Y_t)_t$ and then finally our object is the time
\[
\cS:= \tilde{\cS}_X \vee \tilde{\cS}_Y
\]
so that
\[
\mathbb{E}_{x,y}(\cS) \leq
\mathbb{E}_x\left(\tilde{\cS}_X \right)+\mathbb{E}_y\left(\tilde{\cS}_Y \right)
\leq 188 \,  t_{\rm hit}(s),
\]
as claimed.
\end{proof}

\begin{proposition}\label{tree_meeting_theorem}
For any state $s \in [n]$
\begin{equation*}
t_{\text{\textnormal{meet}}} \leq \frac{189 \,  t_{\text{\textnormal{hit}}}(s)}{\pi(s)}.
\end{equation*}
\end{proposition}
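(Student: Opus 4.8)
The plan is to couple the meeting time to a strong stationary time for the product chain and then exploit the fact that, once both chains are stationary and independent, they meet at state $s$ within a geometrically distributed number of ``attempts'', each attempt costing on the order of $t_{\rm hit}(s)$ in expectation. Concretely, fix the reference state $s \in [n]$ and let $\cS$ be the strong stationary time for the product chain $(X_t, Y_t)_{t \geq 0}$ constructed in Proposition~\ref{corr:strong_stationary_time}, so that $\E_{x,y}(\cS) \leq 188\, t_{\rm hit}(s)$, the pair $(X_{\cS+t}, Y_{\cS+t})_{t \geq 0}$ has law $\pi \otimes \pi$ and is independent of $\cS$. After time $\cS$, run the product chain and wait for the first time $\sigma_1$ at which \emph{both} $X$ and $Y$ have visited $s$ (not necessarily simultaneously); if they have not met by then, we argue that conditionally they meet with probability bounded below. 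Iterating this gives a geometric number of rounds.

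The key steps, in order: (i) Starting from stationarity, bound $\E_\pi(T_s) \leq t_{\rm hit}(s)$ trivially, and more usefully bound the time for a single chain started anywhere to hit $s$ by $\E_x(T_s) \leq t_{\rm hit}(s)$; hence the time $\sigma$ for \emph{both} independent chains (in any initial configuration) to have visited $s$ satisfies $\E(\sigma) \leq 2\, t_{\rm hit}(s)$ by a union-type/max bound — or one can simply use $\E_{x}(T_s)+\E_y(T_s) \le 2 t_{\rm hit}(s)$. (ii) Show that when $X$ visits $s$ at some time and $Y$ visits $s$ at some (generally different) time, there is a uniformly positive probability $p_0 > 0$ that the two chains are at $s$ simultaneously at some bounded time — the cleanest way is: by reversibility and the strong stationary time, at time $\cS$ both are in $\pi$; then $\p_\pi(X_t = s) = \pi(s)$ for all $t$, so over a window $[\cS, \cS + M]$ with $M$ a few multiples of $t_{\rm hit}(s)$ the expected ``overlap time'' $\int \p(X_t = s)\p(Y_t=s)\,dt = \pi(s)^2 M$; combine with a bound on $\E[(\text{overlap time})^2]$ (second moment controlled again by $t_{\rm hit}(s)$-type estimates) and Paley–Zygmund to get $\p(\text{they meet at } s \text{ within } M) \geq c\,\pi(s)^2 M / (\text{something})$, tuned so this probability is $\gtrsim$ constant when $M \asymp t_{\rm hit}(s)/\pi(s)$. (iii) Each failed round costs $O(t_{\rm hit}(s)/\pi(s))$ in expectation and succeeds with constant probability, so summing the geometric series gives $t_{\rm meet} \leq \E_{x,y}(\tau_{\rm meet}) \leq 188\, t_{\rm hit}(s) + O(1)\cdot t_{\rm hit}(s)/\pi(s)$, and absorbing the lower-order term yields the stated constant $189$.

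The main obstacle is step (ii): turning ``each chain individually spends a $\pi(s)$-fraction of its time near $s$ in stationarity'' into ``the two chains are simultaneously at $s$ with the right probability'' requires controlling the second moment (or the correlations) of the occupation time of $s$, and doing so with clean enough constants to land exactly on $189$. I expect the actual argument is slicker than Paley–Zygmund: since after $\cS$ the chains are independent and stationary, one likely uses the renewal/return structure at $s$ — the expected time between successive simultaneous visits to $s$ for the \emph{product} chain started from $\pi \otimes \pi$ is exactly $1/(\pi(s)^2 \cdot(\text{product jump rate at }(s,s)))$, while the expected \emph{number} of $X$-returns to $s$ before that simultaneous visit is $O(1/\pi(s))$, each $X$-excursion away from $s$ having mean length $O(t_{\rm hit}(s))$ — giving the $t_{\rm hit}(s)/\pi(s)$ scaling directly without Paley–Zygmund. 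The remaining care is just bookkeeping the additive $188\,t_{\rm hit}(s)$ from $\cS$ against the $t_{\rm hit}(s)/\pi(s)$ main term using $\pi(s) \leq 1$.
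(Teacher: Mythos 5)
Your overall framework --- strong stationary time for the product chain, followed by repeated ``attempts'' to meet at $s$ that form a geometric number of rounds --- is exactly the paper's approach. But your step (ii), the heart of the matter, is overcomplicated in a way that also risks losing the clean constant, and in fact it is not quite correct as a strategy: there is no need to wait for \emph{both} $X$ and $Y$ to have visited $s$, and no need to control an overlap time via Paley--Zygmund or an excursion/renewal calculation.

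The paper's observation that makes everything trivial is this: after the strong stationary time $\cS$ from Proposition~\ref{corr:strong_stationary_time}, $(X_{\cS+t},Y_{\cS+t})_{t\ge 0}$ is distributed $\pi\otimes\pi$, the two coordinates are \emph{independent}, and both are independent of $\cS$. Now let $X$ run until it hits $s$; call this hitting time $T_s$, which costs $\E_\pi(T_s)\le t_{\rm hit}(s)$ in expectation. Crucially, $T_s$ is a function of the $X$-path alone, so it is independent of $Y$. Since $Y$ is stationary, for any fixed $t$ we have $\p(Y_{\cS+t}=s)=\pi(s)$; averaging over the independent random time $T_s$ gives $\p(Y_{\cS+T_s}=s)=\pi(s)$ \emph{exactly}. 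Thus at the instant $X$ first hits $s$, the two walkers coincide with probability exactly $\pi(s)$ --- a clean Bernoulli($\pi(s)$) trial, at a per-round expected cost of $\E(\cS)+\E(T_s)\le 188\,t_{\rm hit}(s)+t_{\rm hit}(s)=189\,t_{\rm hit}(s)$. If the trial fails, restart from the current configuration (re-stationarize and hit $s$ again). Summing the geometric series gives $t_{\rm meet}\le 189\,t_{\rm hit}(s)/\pi(s)$. Your speculated renewal/excursion argument is in the right spirit but is much heavier machinery, and your Paley--Zygmund idea would both require a second-moment estimate you do not have and would not produce the stated constant.
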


\begin{proof}
From any configuration of two walkers, we can apply Proposition \ref{corr:strong_stationary_time} to construct a strong stationary time $\cS$ with $\mathbb{E}(\cS)\leq 188  \,  t_{\rm hit}(s) $. Then, wait for $(X_t)_t$ to hit $s$, which in expectation takes an additional time period of length $t_{\rm hit}(s)$.

On $(X_t)_t$ hitting $s$, $(Y_t)_t$ is still in independent stationarity, and so we have exactly probability $\pi(s)$ to meet at that instant. Otherwise, we restart the argument with mixing and hitting periods to get another chance to meet at $s$.

Thus we have to repeat the attempt no more than $\operatorname{Geom} (1/\pi(s))$ times, and each attempt conditionally expects to take no longer than $188  \,  t_{\rm hit}(s)+ t_{\rm hit}(s)$.
\end{proof}

\begin{remark}
We find the following illustrative discrete-time bound in \cite{kanade2016coalescence}

\begin{equation*}
t_{\text{\textnormal{meet}}}^{\pi} = O\left(\frac{t_{\text{\textnormal{mix}}}}{||\pi||_2^2}\right)
\end{equation*}

which, while appearing a better bound, is commonly not so for Markov chains on trees. The mixing time for a Markov chain on a tree (which must always be a reversible chain) is always the hitting time of a \emph{central} vertex, i.e. one with
\[
\mathbb{E}_{\pi}T_c={\operatorname{min}}_{v \in [n]}\mathbb{E}_{\pi}T_v. 
\]
Then,
\[
t_{\text{\textnormal{mix}}} = \Theta \left( t_{\text{\textnormal{hit}}}(c) \right)
\]
and so because $\|\pi\|_2^2\leq\|\pi\|_\infty$, Proposition \ref{tree_meeting_theorem} will often give a tighter bound.
\end{remark}

In the following, we will need
the following large deviations result  given in \cite{saloff1997lectures}.

\begin{theorem}\label{thm:markov_chain_large_deviations}
For any finite, irreducible continuous-time Markov chain $(X_t)_t$ with initial stationary distribution $\pi$, and any function on the state space $f$ with
\[
\langle f, \pi \rangle =0, \qquad ||f||_\infty \leq 1,
\]
we have for $x>0$ 
\[
\mathbb{P}_\mu \left(
\frac{1}{t} \int_0^t f(X_s) {\rm d}s > x
\right)
\leq
||\mu/\pi||_2 \exp \left(
-
\frac{x^2 t}{10 \,  t_{\text{\textnormal{rel}}}}
\right),
\]
where $\mu$ is an arbitrary distribution on the state space.
\end{theorem}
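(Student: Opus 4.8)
The plan is to run the Cramér--Chernoff method on the additive functional, represent the exponential moment via Feynman--Kac, and control the resulting top eigenvalue by a variational argument that exploits the spectral gap $1/t_{\mathrm{rel}}$. Fix $\lambda>0$; the exponential Markov inequality gives
\[
\mathbb{P}_\mu\!\left(\tfrac{1}{t}\int_0^t f(X_s)\,\mathrm{d}s > x\right)\leq e^{-\lambda x t}\,\mathbb{E}_\mu\!\left[\exp\!\Big(\lambda\int_0^t f(X_s)\,\mathrm{d}s\Big)\right].
\]
By the Feynman--Kac formula, $\mathbb{E}_x[\exp(\lambda\int_0^t f(X_s)\,\mathrm{d}s)] = (e^{t(Q+\lambda M_f)}\mathbbm{1})(x)$, where $M_f$ is multiplication by $f$. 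Rewriting the $\mu$-average as an inner product in $L^2(\pi)$ and applying Cauchy--Schwarz,
\[
\mathbb{E}_\mu\!\left[\exp\!\Big(\lambda\int_0^t f(X_s)\,\mathrm{d}s\Big)\right] = \big\langle \mu/\pi,\, e^{t(Q+\lambda M_f)}\mathbbm{1}\big\rangle_\pi \leq \|\mu/\pi\|_2\,\big\|e^{t(Q+\lambda M_f)}\mathbbm{1}\big\|_\pi \leq \|\mu/\pi\|_2\, e^{t\Lambda(\lambda)},
\]
where $\Lambda(\lambda)$ is the largest eigenvalue of $Q+\lambda M_f$, self-adjoint on $L^2(\pi)$ since the chain is reversible (in the general case one replaces $Q$ by its additive reversibilization $\tfrac12(Q+Q^*)$, which does not change the quadratic form $\langle g,Qg\rangle_\pi$ and is the object behind $t_{\mathrm{rel}}$), and I used $\|\mathbbm{1}\|_\pi=1$.

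The core step is a quadratic bound on $\Lambda(\lambda)$ for small $\lambda$. By Rayleigh--Ritz, $\Lambda(\lambda) = \sup\{\langle g,Qg\rangle_\pi + \lambda\langle fg,g\rangle_\pi : \|g\|_\pi = 1\}$. Write $g = a\mathbbm{1}+h$ with $h\perp\mathbbm{1}$ and $a^2+\|h\|_\pi^2 = 1$; since $Q\mathbbm{1}=0$, the Dirichlet form only sees $h$, so $\langle g,Qg\rangle_\pi = \langle h,Qh\rangle_\pi \leq -\|h\|_\pi^2/t_{\mathrm{rel}}$ by the spectral gap. Using $\langle f,\pi\rangle = 0$, $\|f\|_\infty\leq 1$, $|a|\leq 1$ and $\|h\|_{L^1(\pi)}\leq\|h\|_\pi$, one gets $\langle fg,g\rangle_\pi = 2a\langle fh,\pi\rangle + \langle fh^2,\pi\rangle \leq 2\|h\|_\pi + \|h\|_\pi^2$. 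Hence, with $u=\|h\|_\pi\in[0,1]$, $\Lambda(\lambda)\leq\sup_{u\in[0,1]}\big[(\lambda-1/t_{\mathrm{rel}})u^2 + 2\lambda u\big]$, and for $\lambda\leq 1/(2t_{\mathrm{rel}})$ the maximiser is interior and the value is at most $2\lambda^2 t_{\mathrm{rel}}$.

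Plugging this in shows the probability in question is at most $\|\mu/\pi\|_2\exp(-\lambda x t + 2\lambda^2 t_{\mathrm{rel}} t)$; since $\|f\|_\infty\leq 1$ the time-average lies in $[-1,1]$, so we may assume $x\leq 1$, and the choice $\lambda = x/(4t_{\mathrm{rel}})\leq 1/(2t_{\mathrm{rel}})$ gives exponent $-x^2 t/(8t_{\mathrm{rel}})$ --- comfortably stronger than the claimed $-x^2t/(10t_{\mathrm{rel}})$, which leaves slack for any constant loss incurred in making the perturbation estimate fully rigorous. I expect the main obstacle to be exactly this last point: justifying the Feynman--Kac identity for the operator $Q+\lambda M_f$ and pinning down $\Lambda(\lambda)\leq 2\lambda^2 t_{\mathrm{rel}}$ with careful attention to the constraint $u\in[0,1]$ in the optimisation; the Chernoff inequality and the Cauchy--Schwarz step are routine.
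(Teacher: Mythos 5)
The paper does not prove this statement; it is quoted from Saloff-Coste's Saint-Flour lecture notes, so there is no in-paper proof to compare yours against. Your blind derivation is correct and self-contained, and it follows the standard route for concentration of additive functionals of Markov chains (Gillman, Lezaud, Saloff-Coste): exponential Chernoff bound, Feynman--Kac representation $\mathbb{E}_x[e^{\lambda\int_0^t f(X_s)\,\mathrm ds}]=(e^{t(Q+\lambda M_f)}\mathbbm 1)(x)$, Cauchy--Schwarz in $L^2(\pi)$ to extract the $\|\mu/\pi\|_2$ factor, and then control of the top eigenvalue $\Lambda(\lambda)$ of the perturbed generator. The one genuinely clean idea is that instead of Kato's perturbation series (as in Lezaud) you bound $\Lambda(\lambda)$ by a direct Rayleigh-quotient argument: writing $g=a\mathbbm 1+h$ with $h\perp\mathbbm 1$, the Dirichlet form gives $-\|h\|_\pi^2/t_{\mathrm{rel}}$, and $\langle f,\pi\rangle=0$, $\|f\|_\infty\le1$ give $\langle fg,g\rangle_\pi\le 2\|h\|_\pi+\|h\|_\pi^2$; optimising over $\|h\|_\pi\in[0,1]$ with $\lambda\le 1/(2t_{\mathrm{rel}})$ yields $\Lambda(\lambda)\le 2\lambda^2 t_{\mathrm{rel}}$, and the final Chernoff optimisation at $\lambda=x/(4t_{\mathrm{rel}})$ (valid since $x\le 1$ is WLOG because $\|f\|_\infty\le 1$) gives exponent $-x^2t/(8t_{\mathrm{rel}})$, which is even slightly sharper than the stated $-x^2t/(10t_{\mathrm{rel}})$. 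Your parenthetical about replacing $Q$ by its additive reversibilisation $\tfrac12(Q+Q^*)$ in the non-self-adjoint case is also the right remark: the logarithmic-norm bound $\tfrac{\mathrm d}{\mathrm dt}\|e^{tA}\mathbbm 1\|_\pi^2=2\langle e^{tA}\mathbbm 1,\tfrac12(A+A^*)e^{tA}\mathbbm 1\rangle_\pi$ only sees the symmetric part, so the same Rayleigh estimate applies. (In the paper's application $\mu=\pi\otimes\pi$ and the chain is reversible, so these caveats are moot.) I see no gap.
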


We now use the concept of the chain $(X_t)_{t \geq 0}$ \emph{observed on a subset} $V \subset N$ described in Section 2.7.1 in~\cite{aldous-fill-2014}: define a  clock process
\[
U(t):=\int_0^t \mathbbm{1}_{V} \left( X_s \right) {\rm d}s, 
\]
with generalised right-continuous inverse $U^{-1}$. Then the partially observed chain $(P_t)_{t \geq 0}$ is defined for any  $t \geq 0$ via
\[
P_t:=X_{U^{-1}(t)}.
\]

This corresponds to the deletion of states in $V^c$ from the history of $(X_t)_{t\geq 0}$, and so it can be shown that  $(P_t)_{t \geq 0}$ is Markovian and has the natural stationary distribution
\[
\frac{\pi(\cdot) \mathbbm{1}_{V} (\cdot)}{\pi(V)}.
\]

Then, we can define the random \emph{subset meeting time} $\tau^\pi_{\text{\textnormal{meet}}}(A)$ analogously to $\tau^\pi_{\text{\textnormal{meet}}}$ except for the partially observed product chain on $A \times A$ rather than the full chain. Similarly,  $t^\pi_{\text{\textnormal{meet}}}(A)=\mathbb{E}(\tau^\pi_{\text{\textnormal{meet}}}(A))$.

\begin{theorem}\label{partial_meeting} For any $A \subset [n]$,
\[
t_{\rm meet}
\leq
188  t_{\rm hit}(s)
+
\frac{2 \, t^\pi_{\text{\textnormal{meet}}}(A)}{\pi(A)^2}
+
\frac{1568 \,  t_{\rm hit}(s)}{\pi(A)^4}
.
\]
\end{theorem}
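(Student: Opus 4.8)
The plan is to combine three ingredients: the strong stationary time of Proposition~\ref{corr:strong_stationary_time}, the observation that the \emph{product} chain ``observed on'' $A\times A$ is automatically in stationarity as soon as its two coordinates are, and the concentration estimate of Theorem~\ref{thm:markov_chain_large_deviations}, which lets us convert \emph{observed} time back into real time. Throughout we work with two independent copies $(X_t)_{t\ge0}$, $(Y_t)_{t\ge0}$ of the chain, i.e.\ with the product chain $(X_t,Y_t)_{t\ge0}$ on $[n]^2$, started from an arbitrary pair $(x,y)$; this product chain is again finite, reversible and irreducible, so Theorem~\ref{thm:markov_chain_large_deviations} applies to it, and its relaxation time equals $t_{\rm rel}$.

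First I would apply Proposition~\ref{corr:strong_stationary_time} to the product chain to obtain a time $\cS$ with $\E_{x,y}(\cS)\le 188\,t_{\rm hit}(s)$ such that $(X_{\cS+\cdot},Y_{\cS+\cdot})$ has law $\pi\otimes\pi$ and is independent of $\cS$; this yields the first term. From time $\cS$ on, pass to the chain observed on $V=A\times A$, with clock $U(t)=\int_{\cS}^{\cS+t}\1_{A\times A}(X_r,Y_r)\,\de r$. Because the product chain is now in $\pi\otimes\pi$ and the normalised restriction of $\pi\otimes\pi$ to $A\times A$ is precisely the stationary law of the observed chain, the observed product chain starts in \emph{its} stationarity; hence its meeting time, measured in observed time, is distributed as $\tau^\pi_{\rm meet}(A)$ with mean $t^\pi_{\rm meet}(A)$. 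Since a meeting inside $A$ is in particular a meeting, we get the pathwise bound
\[
\tau_{\rm meet}\ \le\ \cS + U^{-1}\big(\tau^\pi_{\rm meet}(A)\big).
\]

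The heart of the matter is to bound $\E\big(U^{-1}(\tau^\pi_{\rm meet}(A))\big)$. In stationarity the clock $U$ advances at average rate $\pi(A)^2$, so $\tau$ units of observed time correspond to roughly $\tau/\pi(A)^2$ units of real time; by Markov's inequality $\tau^\pi_{\rm meet}(A)\le 2\,t^\pi_{\rm meet}(A)$ with probability at least $\tfrac12$, which is the source of the main term $2\,t^\pi_{\rm meet}(A)/\pi(A)^2$. To make the time change rigorous I would apply Theorem~\ref{thm:markov_chain_large_deviations} to the product chain with $f=\pi(A)^2-\1_{A\times A}$ (so $\langle f,\pi\otimes\pi\rangle=0$ and $\|f\|_\infty\le1$) and $\mu=\pi\otimes\pi$, so that $\|\mu/\pi\|_2=1$; combined with $t_{\rm rel}\le(1+\tfrac1{\log2})\,t_{\rm mix}\le 16(1+\tfrac1{\log2})\,t_{\rm hit}(s)$ from Lemma~\ref{mixing_and_relaxation} and Proposition~\ref{corr:strong_stationary_time}, this shows that, outside an event of small probability, already within a real-time window of order $t^\pi_{\rm meet}(A)/\pi(A)^2 + t_{\rm hit}(s)/\pi(A)^4$ the clock $U$ exceeds $2\,t^\pi_{\rm meet}(A)$, i.e.\ the walkers have met. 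On the complementary small-probability event — either $\tau^\pi_{\rm meet}(A)$ overshoots its Markov bound, or the clock lags behind its typical value — I would re-mix via a further application of Proposition~\ref{corr:strong_stationary_time} (cost $188\,t_{\rm hit}(s)$ each time, and after which the observed chain re-enters its stationarity) and repeat. Each round succeeds with probability bounded away from $0$ and, thanks to the strong stationary property, the rounds are independent attempts, so their number is stochastically dominated by a geometric variable; summing the re-mixing and clock-lag costs over the rounds contributes only to the error term $1568\,t_{\rm hit}(s)/\pi(A)^4$. Collecting the three contributions and tracking constants gives the claimed inequality.

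The main obstacle is exactly this last conversion: $\tau^\pi_{\rm meet}(A)$ and the clock $U$ are functionals of the same trajectory and hence dependent, so one cannot simply multiply ``observed time'' by $\pi(A)^{-2}$; and after a failed round the product chain is no longer in a known law, which is why the argument must re-mix before it may reuse the stationary concentration bound of Theorem~\ref{thm:markov_chain_large_deviations}. Packing all of these auxiliary costs — including the relaxation-time factor, via $t_{\rm rel}=O(t_{\rm hit}(s))$ — into the single term $1568\,t_{\rm hit}(s)/\pi(A)^4$ is the part that requires careful bookkeeping but no further idea.
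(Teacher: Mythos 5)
Your overall framework is right---strong stationary time to enter stationarity, time change to the chain observed on $A\times A$, and Theorem~\ref{thm:markov_chain_large_deviations} to control the clock---and you correctly identify the crux: $\tau^\pi_{\rm meet}(A)$ and the clock $U$ are built from the same trajectory, so one cannot naively divide observed time by $\pi(A)^2$. However, the iterative re-mixing machinery you propose to resolve this does not yield the stated constants, and the paper resolves the dependency by a much lighter means.

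Concretely, if each ``round'' consists of re-mixing (cost $\approx 188\,t_{\rm hit}(s)$) plus a window of real time of order $2\,t^\pi_{\rm meet}(A)/\pi(A)^2 + C\,t_{\rm rel}/\pi(A)^4$, and each round succeeds with probability $p$ bounded away from $1$, then summing over a geometric number of rounds multiplies \emph{every} term---in particular the central term $2\,t^\pi_{\rm meet}(A)/\pi(A)^2$---by $1/p$. Using Markov at level $1/2$ forces $p\le 1/2$, and optimizing the Markov threshold over $\varepsilon$ gives a coefficient $1/(\varepsilon(1-\varepsilon))\ge 4$ in front of $t^\pi_{\rm meet}(A)/\pi(A)^2$, never $2$. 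So you cannot recover the claimed inequality this way; the middle term would necessarily be inflated, and the accumulated re-mixing costs would also not pack neatly into the single term $1568\,t_{\rm hit}(s)/\pi(A)^4$.

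The paper avoids iteration entirely. Starting both walkers from $\pi\otimes\pi$, it bounds $\E(\tau^\pi_{\rm meet})=\int_0^\infty\p(\tau^\pi_{\rm meet}>t)\,\de t$ by splitting each event $\{\tau^\pi_{\rm meet}>t\}$ according to whether $U(t)\ge\tfrac12\pi(A)^2 t$ or not. On the good half, monotonicity of $U$ together with the pathwise inequality $U(\tau^\pi_{\rm meet})\le\tau^\pi_{\rm meet}(A)$ forces $\tau^\pi_{\rm meet}(A)\ge\tfrac12\pi(A)^2 t$, and integrating in $t$ gives the clean $\tfrac{2}{\pi(A)^2}t^\pi_{\rm meet}(A)$; on the bad half one integrates the large-deviations tail of $U(t)$ to obtain $O(t_{\rm rel}/\pi(A)^4)$. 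This ``split the integrand, not the trajectory'' argument handles the dependency by a one-shot union bound rather than by re-mixing, and is the missing idea: the first term $188\,t_{\rm hit}(s)$ is then added once via $t_{\rm meet}\le\max_{x,y}\E_{x,y}(\cS)+t^\pi_{\rm meet}$, and $1568=98\times 16$ comes from substituting $t_{\rm rel}\le(1+\tfrac1{\log 2})t_{\rm mix}$ and $t_{\rm mix}\le 16\,t_{\rm hit}(s)$.
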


\begin{proof}
We first prove the claim
\begin{equation}\label{eq:claim1}
t^\pi_{\text{\textnormal{meet}}}
\leq
\frac{2 \, t^\pi_{\text{\textnormal{meet}}}(A)}{\pi(A)^2}
+
\frac{98 \,  t_{\text{\textnormal{mix}}}}{\pi(A)^4}.
\end{equation}

Consider two independent copies $(X_t, Y_t)_t$ of the stationary chain, such that in particular, for any 
$ t \geq 0$ we have that $\mathcal{L}(X_t, Y_t)= \pi \otimes \pi$.
Define the time-change
\[
U(t):=\int_0^t \mathbbm{1}_{A \times A} \left( X_s,Y_s \right) {\rm d}s .
\]
Then, the product chain $(\tilde X_t, \tilde Y_t)_{t \geq 0}$ observed on $A \times A$ satisfies $(\tilde X_t, \tilde Y_t) = (X_{U^{-1}(t)}, Y_{U^{-1}(t)})$ for any $t \geq 0$. 
Therefore, we have that for any $t \geq 0$,
\[ \p ( U(\tau_{\rm meet}^\pi) \geq t ) \leq \p( \tau_{\rm meet}^\pi(A) \geq t) ,\]
since a meeting might also happen outside $A$.
In particular, we can deduce that 
\begin{align}\label{eq:2605-1} 
\E(\tau_{\rm meet}^\pi)   & = 
\int_0^\infty \p( \tau_{\rm meet}^\pi >t) \,{\rm d} t \notag\\
& \leq \int_0^\infty \p \Big( U( \tau_{\rm meet}^\pi) > U(t); U(t) \geq \frac{\pi(A)^2}{2} t\Big){\rm d }t  + \int_0^\infty\p  \Big(U(t) \leq \frac{\pi(A)^2}{2} t \Big) \,{\rm d} t\notag \\
&  \leq\int_0^\infty \p \Big( \tau^\pi_{\rm meet}(A) \geq  \frac{\pi(A)^2}{2} t\Big){\rm d }t  + \int_0^\infty\p  \Big(U(t) \leq \frac{\pi(A)^2}{2} t \Big) \,{\rm d} t \notag\\
& \leq \frac{2}{\pi(A)^2} t_{\rm meet}^\pi(A) + \int_0^\infty\p  \Big(U(t) \leq \frac{\pi(A)^2}{2} t \Big) \,{\rm d} t .
\end{align}
It remains to estimate the second integral on the right hand side.

 For this purpose, we apply Theorem \ref{thm:markov_chain_large_deviations}
 to the function
$ f :=
\mathbbm{1}_{ A \times A }
-\pi(A)^2$
to obtain
\[
\mathbb{P}_\pi \left(
\frac{1}{t}\int_0^t \mathbbm{1}_{A \otimes A} \left( X_s, Y_s \right) {\rm d}s
-\pi(A)^2
<-x
\right)
\leq
\exp \left(
-
\frac{x^2 t}{10  \, t_{\text{\textnormal{rel}}}}
\right)
\]
and hence
\[
\begin{split}
\mathbb{P}\left(U(t) \leq \frac{\pi(A)^2}{2}t\right) 
&= \mathbb{P}\left( \frac{U(t)}{t}-\pi(A)^2 \leq -\frac{\pi(A)^2}{2} \right) 
\leq \exp \left( - \frac{t \pi(A)^4}{40 \,  t_{\text{rel}}}\right) . 
\end{split}
\]
We deduce that
\[ \int_0^t  \p \left( U(t) \leq \frac{\pi(A)^2}{2}t \right)
\leq \frac{40 \,  t_{\rm rel}}{\pi(A)^4}. \]
Moreover, 
Lemma \ref{mixing_and_relaxation} gives 
\[
40 \,  t_{\rm rel}
\leq
40 \left(1+\frac{1}{\log 2}\right) t_{\rm mix}
< 98 \,  t_{\rm mix} . 
\]
Combining these estimates with~\eqref{eq:2605-1} gives the claim~\eqref{eq:claim1}.

To obtain the  statement of the theorem, recall from Proposition \ref{corr:strong_stationary_time} that there exists a strong stationary time $\cS$ such that 
\[
t_{\rm mix} \leq 16  \, t_{\text{\textnormal{hit}}}(s)
\quad \text{and} \quad
\mathbb{E}_{x,y}(\cS)\leq 188  \,  t_{\rm hit}(s).
\]
Using the stationary time constructed in this corollary gives the  bound
\[
t_{\rm meet}\leq
\max_{x,y \in [n]}
\mathbb{E}_{x,y}(\cS)
+
t^\pi_{\rm meet},
\]
which together with~\eqref{eq:claim1} proves the theorem.
\end{proof}

\section{Structural results for subcritical random graphs}\label{sec:structure}

In this section, we collect some of the structural results on subcritical inhomogeneous 
random graphs that we will need later on in Section~\ref{voter_models_section}. 
Some of these results are known, but as the literature on subcritical inhomogeneous random 
graphs is less developed than for supercritical random graphs, we have to prove
the more specialised ones.

Let $G_N \in \mathcal{G}_{\beta,\gamma}$.
Denote by $\comps$  the set of (connected) components of $G_N$. 
For any $\sC \in \comps$ we write the graph as $(V(\sC),E(\sC))$ and denote by $|\sC|:=|V(\sC)|$ the number of vertices in $\sC$. Moreover, we let $\sC(i)$ denote the component containing vertex $i$.
Throughout this section, we will use the notation 
\[ K_\gamma:= N^\frac{1-2\gamma}{2-2\gamma} \log N , \]
and  call a component $\sC \in \comps$ \emph{big} if 
$\sC = \sC(i)$ for some $i \leq K_\gamma$. Otherwise, the component is called \emph{small}. 
Moreover, we define the collection of all vertices lying in big components as
\[ V_{\rm big} := \bigcup_{i \leq K_\gamma} V(\sC(i)) . \]

The first proposition is a standard result on the (componentwise) diameter. 

\begin{proposition}\label{prop:diameter}
For $G_N \in \cG_{\beta,\gamma}$ with $\beta + 2 \gamma < 1$, we have that 
	\[ \diam (G_N) := \sup_{\sC \in \comps}  \diam (\sC) = O_{\p} ( \log N ). \]
\end{proposition}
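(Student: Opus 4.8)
The plan is to bound the diameter of each component by a standard first‑ and second‑moment comparison with a subcritical branching process, exploiting the fact that the parameter regime $\beta+2\gamma<1$ is exactly the subcritical phase (Remark~\ref{rem:IRG}(d)). By the asymptotic equivalence of all models in $\cG_{\beta,\gamma}$ (Remark~\ref{rem:IRG}(c)), I may work with whichever representative is most convenient; the Norros--Reittu model (or the Chung--Lu model directly) is natural here, since its exploration is dominated by a multi‑type Poisson branching process whose mean offspring operator has norm strictly less than $1$ in the subcritical regime.

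First I would set up the breadth‑first exploration from a fixed vertex $v$: let $Z_k$ denote the number of vertices at graph distance exactly $k$ from $v$. The key estimate is that $\E(Z_k) \leq \rho^k$ for some $\rho = \rho(\beta,\gamma) < 1$, uniformly in $v$ and $N$; this follows because the expected number of neighbours of a vertex $i$, summed appropriately, is controlled by the operator norm of the kernel $\kappa(x,y) = \beta x^{-\gamma} y^{-\gamma}$ integrated against the vertex‑weight measure, which is finite and gives spectral radius $<1$ precisely when $\beta + 2\gamma < 1$. (One has to be a little careful at the low‑index vertices where $p_{ij}$ can be truncated at $1$, but these contribute only boundedly.) Given the geometric decay of $\E(Z_k)$, a union bound over all $N$ starting vertices gives
\[
\p\big( \exists\, v : Z_{K\log N}(v) \geq 1 \big) \leq N \cdot \rho^{K \log N} = N^{1 + K \log \rho} \to 0
\]
once $K > -1/\log\rho$. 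Hence with high probability every component has radius $O(\log N)$ from any of its vertices, and therefore diameter $O(\log N)$.

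The main obstacle I expect is making the first‑moment bound $\E(Z_k) \leq \rho^k$ genuinely uniform over all starting vertices, including the ``heavy'' vertices $i$ of small index whose expected degree is large (of order $N^{\gamma}/i^{\gamma}$ times a constant). For such a vertex the first generation $Z_1$ is not $O(1)$, so the clean bound $\E(Z_k)\leq\rho^k$ fails at $k=1$; instead I would argue $\E(Z_k) \leq C \rho^{k}$ with a vertex‑independent constant $C$ by noting that after one step the exploration has moved to a ``typical'' mixture of vertices weighted by $p_{ij}$, and the relevant size‑biased measure still has operator norm $<1$. Concretely, one shows $\E(Z_{k+1}\mid Z_k) \leq \rho\, Z_k$ for all $k\geq 1$ (a one‑step mean‑contraction that holds for every vertex because it only uses the column sums $\sum_j p_{ij} \kappa$‑type bounds), and separately controls $\E(Z_1)$; since $\sum_i \E(Z_1(i))$‑type quantities are at most polynomial, the union bound over $v$ still closes. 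This reduction of everything to a uniform one‑step contraction estimate for the mean offspring operator, together with the crude union bound over $[N]$, is the heart of the argument; the rest is routine and I would not expand it further here.
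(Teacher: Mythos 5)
Your strategy -- dominate the breadth-first exploration by a subcritical branching process, exploit the geometric decay of generation sizes, and close with a polynomial union bound -- is the same as the paper's, which runs the argument through the Norros--Reittu coupling of Section~\ref{Branching_section} and Proposition~\ref{prop:tree_coupling}. The only organisational difference is that the paper peels the root off each $\mathcal{T}^k$ and observes that the resulting subtrees form a collection of $Y\sim\operatorname{Pois}(w([N]))=O_{\p}(N)$ i.i.d.\ subcritical Galton--Watson trees, union-bounding over those, whereas you union-bound over the $N$ starting vertices and use that $\sum_v \E\,d(v)=O(N)$; these counts are the same order.

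One imprecision is worth flagging, because as written it would not survive a referee. The conditional contraction you assert, ``$\E(Z_{k+1}\mid Z_k)\leq \rho\,Z_k$ for all $k\geq 1$ \ldots\ because it only uses the column sums $\sum_j p_{ij}$,'' is not correct: the column sum from a heavy vertex $i$ is $\sum_j p_{ij}\approx w(i)=\Theta\big((N/i)^\gamma\big)$, which is unbounded in $N$, so if generation $k$ happens to contain a low-index vertex the conditional mean of $Z_{k+1}$ is much larger than $\rho\,Z_k$. What actually holds, and is what your union bound needs, is the \emph{unconditional} contraction $\E(Z_{k+1})\leq\rho\,\E(Z_k)$ for $k\geq 1$ (equivalently $\E(Z_k)\leq\E(Z_1)\,\rho^{k-1}$), and this comes from the size-biased structure of the exploration, not from column-sum bounds: after the first generation, every newly discovered vertex has mark distributed as $M$, whose weight $w(M)\stackrel{d}{=}W_N^*$ has mean $\E(W_N^*)\to\beta/(1-2\gamma)<1$. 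This is precisely what the Galton--Watson coupling in Proposition~\ref{prop:tree_coupling} delivers cleanly, and is also why the Norros--Reittu representative of $\cG_{\beta,\gamma}$ is genuinely more convenient here than working with Chung--Lu edge probabilities directly, where one would need to handle the without-replacement dependence in the exploration separately.
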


As we will see later on, for the classical voter model, the invariant measure of the associated random walk is normalized by $\sum_{z \in \sC(k)} \de(z)^{\theta-1}$, so  that in the following we collect various bounds on
$\sum_{z \in \sC(k)} \de(z)$.

\begin{proposition} 
For $G_N \in \cG$ with $\beta+2\gamma<1$, with high probability,
\begin{enumerate}[label={(\alph*)},ref={\theproposition~(\alph*)}]
\item \label{prop:big_sum_of_degrees}
\[
\max_{k \leq K_\gamma}
\frac{\sum_{z \in \sC(k)} \de(z)}{(N/k)^\gamma} \leq \log N.
\]
\item \label{prop:small_sum_of_degrees}
\[
\max_{i\notin V_{\rm big}} 
\sum_{v \in \mathscr{C}(i)} \operatorname{d}(v)
=O_{\mathbb{P}}^{\log N}
\left(N^{\frac{\gamma}{2-2\gamma}}\right).
\]
\end{enumerate}
\end{proposition}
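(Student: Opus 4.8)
The plan is to estimate the two sums by first understanding the maximal degree across the graph, then combining this with structural facts about components.

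\textbf{Part (a).} For a big component $\sC(k)$ with $k \leq K_\gamma = N^{\frac{1-2\gamma}{2-2\gamma}}\log N$, I would argue as follows. The expected degree of vertex $i$ in the Chung-Lu model is $\sum_j p_{i,j} \approx \beta N^{2\gamma-1} i^{-\gamma} \sum_{j} j^{-\gamma} \asymp \beta \frac{N^\gamma}{1-\gamma} i^{-\gamma}$, so typically $\de(i) \asymp (N/i)^\gamma$ (up to the point where this is $O(1)$). By a concentration/union bound argument (e.g.\ Chernoff for sums of independent indicators, or a first-moment bound on the number of vertices of unexpectedly high degree), with high probability $\de(i) \leq (N/i)^\gamma \log N$ for all $i$ simultaneously, and more crudely $\de(v) \leq N^\gamma \log N$ for all $v$. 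The key structural input is then a bound on $|\sC(k)|$ for $k \leq K_\gamma$: in the subcritical regime the component of a vertex of expected degree $\lambda_i \asymp (N/i)^\gamma$ is dominated by a subcritical (or near-critical) branching process, so $|\sC(k)|$ is with high probability at most $(N/k)^\gamma \cdot \mathrm{polylog}(N)$ — essentially the component size is of the order of the degree of its hub. Combining, $\sum_{z\in\sC(k)}\de(z) \leq |\sC(k)| \cdot \max_v \de(v)$ is too lossy; instead I would use that the sum of degrees in $\sC(k)$ is, up to constants, twice the number of edges in $\sC(k)$, and that this in turn is bounded (via the branching process exploration) by the total progeny weighted by offspring, which concentrates around $(N/k)^\gamma$ times a polylog factor. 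So $\sum_{z\in\sC(k)}\de(z) = O_\p^{\log N}((N/k)^\gamma)$, which is the claim (with the $\log N$ absorbing the polylog, after possibly strengthening the exponent on $\log N$ in the statement — or re-examining whether a single $\log N$ suffices via sharper exploration estimates).

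\textbf{Part (b).} For small components, i.e.\ $i \notin V_{\rm big}$ so that $\sC(i)$ contains no vertex of index $\leq K_\gamma$, every vertex $v \in \sC(i)$ has index $> K_\gamma$ and hence expected degree $\lambda_v \lesssim (N/K_\gamma)^\gamma \asymp N^{\gamma \cdot \frac{1}{2-2\gamma}}/(\log N)^\gamma = N^{\frac{\gamma}{2-2\gamma}}/(\log N)^\gamma$. With high probability the actual maximal degree among these vertices is at most $N^{\frac{\gamma}{2-2\gamma}}$ (a polylog correction to the mean is absorbed, or one shows the maximum over $v$ with $\lambda_v \le \mu$ of $\de(v)$ is $O^{\log N}_\p(\mu + \log N)$, and here $\mu = N^{\frac{\gamma}{2-2\gamma}}/(\log N)^\gamma$ dominates). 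Separately, the components not containing a low-index vertex are all "small" in size: by an exploration/branching-process argument (again conditioning away the high-degree hubs), the largest small component has size $O_\p^{\log N}(1)$ in the sense of being polylogarithmic, or at most $N^{o(1)}$. Actually to get the stated bound it suffices to note $\sum_{v\in\sC(i)}\de(v)$ is bounded by (number of edges in $\sC(i)$)$\times 2$; since $\sC(i)$ is small its edge count is polylogarithmic, and each degree is at most $N^{\frac{\gamma}{2-2\gamma}}$, but that product would overshoot — so the cleaner route is: among all small components, their sizes are polylog, and the single largest degree appearing in any of them is $O_\p^{\log N}(N^{\frac{\gamma}{2-2\gamma}})$, and since a small component is essentially a tree (subcritical components have $O^{\log N}_\p(1)$ surplus edges) with polylog many vertices, $\sum_{v\in\sC(i)}\de(v) = 2|E(\sC(i))| = O^{\log N}_\p(N^{\frac{\gamma}{2-2\gamma}})$, where the dominant contribution comes from the single highest-degree vertex in the component.

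\textbf{Main obstacle.} The crux of both parts is the structural control on component sizes and on the sum of degrees within a component — specifically, that the component of a hub of expected degree $\lambda$ has its total degree-sum of order $\lambda$ (not $\lambda$ times the component size), which requires a careful subcritical exploration argument on the inhomogeneous random graph, tracking how the weighted offspring of the branching-process approximation accumulates. This is exactly the "fine analysis of the structure of subcritical inhomogeneous random graphs not readily available in the literature" the introduction alludes to; I expect it to rely on coupling the component exploration to a multi-type branching process with mean-offspring operator of norm $\beta + 2\gamma < 1$ (subcritical), using the spectral gap $1 - (\beta+2\gamma) > 0$ to get geometric decay, and then handling the extreme vertices (where $p_{i,j}$ would exceed $1$, i.e.\ small indices) separately by a direct union bound. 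I would also need to be slightly careful that the $\log N$ powers claimed are genuinely achievable with the $O^{\log N}_\p$ notation, i.e.\ that polylogarithmic (not just subpolynomial) corrections suffice — which should follow from standard Chernoff bounds giving $\log N$-type deviations for the degrees and from diameter being $O_\p(\log N)$ (Proposition~\ref{prop:diameter}) controlling the component exploration depth.
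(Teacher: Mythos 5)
Your overall strategy (branching-process coupling, then size control of components via hub degrees) matches the paper's approach, but both parts have gaps, and the gap in part~(b) is fatal as written.

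\textbf{Part (a).} The mechanism you need is more than ``concentration'': once you decompose $\sC(k)$ as a root of degree $\sim (N/k)^\gamma$ with independent subtrees hanging off each edge, the subtree sizes are heavy-tailed (power law with exponent $1/\gamma - 1$, coming from Proposition~\ref{le:poisson_powerlaw}), so their sum does \emph{not} concentrate in the usual Gaussian sense. The paper instead uses the subexponential property of power-law sums (the sum of $d$ i.i.d.\ heavy-tailed variables is dominated by the single largest summand): this is what lets a single $\log N$ factor survive uniformly over $k \leq K_\gamma$, after a union bound whose success crucially uses the specific choice $K_\gamma = N^{\frac{1-2\gamma}{2-2\gamma}}\log N$. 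Your hedging (``possibly strengthening the exponent on $\log N$'') is a sign the key lemma is missing; the subexponential estimate closes it cleanly, so a sharper $\log N$ is not needed.

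\textbf{Part (b).} Here the argument is internally inconsistent. You claim the small components have polylogarithmic size, and then claim $\sum_{v\in\sC(i)}\de(v) = 2|E(\sC(i))| = O^{\log N}_\p(N^{\gamma/(2-2\gamma)})$. These cannot both hold: for a tree (or near-tree) with polylog many vertices, $2|E| = 2(|V|-1)$ is polylog, not $N^{\gamma/(2-2\gamma)}$. In fact the premise is false: a vertex with index just above $K_\gamma$ has expected degree $\asymp (N/K_\gamma)^\gamma \asymp N^{\gamma/(2-2\gamma)}/(\log N)^\gamma$, which is polynomially large, and generically such a vertex lies in a small component, so small components can have size of order $N^{\gamma/(2-2\gamma)}$ up to polylog. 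The correct route is the one the paper takes: couple $\sC(i)$, $i \notin V_{\rm big}$, to a depleted GW tree $\mathcal{T}_z^z$ whose non-root offspring law is $\operatorname{Pois}(W^*_{N,z})$ with $z > K_\gamma$ (so weights truncated below $w(K_\gamma)$), convert to the exploration random walk, and apply a high-moment Rosenthal-type bound (Lemma~\ref{walk_moments}) to show $|\mathcal{T}_z^z| = O(N^{\gamma/(2-2\gamma)})$ uniformly over $z > K_\gamma$ with high probability; the degree sum then follows since the tree identity gives $\sum_v \de(v) = 2(|V|-1)$. The maximum degree bound on small vertices (Proposition~\ref{prop:small_degrees_arent_big}) is used, but only to control the root's degree in the exploration, not to finish the degree sum directly as in your sketch.
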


For the largest component this result is not optimal as we lose a $\log$ factor, 
see also~\cite[Theorem 1.1]{janson2008largest}, but the latter result does not cover the other components.

As a next result, we need that the large degrees $\de(i)$ are well approximated by their means.
Also, we need to know that for each of the vertices with large degree, a positive proportion of its neighbours has degree $1$. 
One of the challenges in the proof is that we need these bounds uniformly over all big components.

\begin{proposition}\label{prop:stars_and_leaves}
For $G_N \in \cG_{\beta, \gamma}$ with $\beta+2\gamma<1$ 
the following statements hold: 
\begin{enumerate}[label={(\alph*)},ref={\theproposition~(\alph*)}]
\item \label{prop:star_degrees}
\[
\min_{k \leq K_\gamma}
\frac{\operatorname{d}(k)}{(N/k)^\gamma}=\Omega_{\mathbb{P}}(1),
\quad \quad
\max_{k \leq K_\gamma}
\frac{\operatorname{d}(k)}{(N/k)^\gamma}=O_{\mathbb{P}}(1).
\] 
\item \label{prop:leaf_counts} For any $k \in [N]$, let $L_k$ be the number of neighbours of $k$ of degree $1$, then we have
\[
\min_{k \leq K_\gamma}
\frac{|L_k|}{\operatorname{d}(k)}=\Omega_{\mathbb{P}}(1) .
\]
\end{enumerate}
\end{proposition}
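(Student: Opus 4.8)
The plan is to work with the Chung--Lu representative of $\cG_{\beta,\gamma}$, which we may do by the asymptotic equivalence recalled in Remark~\ref{rem:IRG}(c), so that edges $\{i,j\}$ are present independently with probability $p_{i,j} = \beta N^{2\gamma-1}(ij)^{-\gamma}\wedge 1$. For part (a), fix $k \le K_\gamma$ and note that $\de(k) = \sum_{j \neq k}\1_{\{k\sim j\}}$ is a sum of independent Bernoulli variables with mean $\lambda_k := \sum_{j\neq k} p_{k,j}$. A direct computation gives $\lambda_k = \Theta((N/k)^\gamma)$: the sum $\sum_j (kj)^{-\gamma}\beta N^{2\gamma-1}$ over the range where $p_{k,j}<1$ behaves like $\beta N^{2\gamma-1}k^{-\gamma}\sum_{j} j^{-\gamma} \asymp N^{2\gamma-1}k^{-\gamma} N^{1-\gamma} = (N/k)^\gamma$ since $\gamma<1$, while the correction from indices $j$ with $p_{k,j}=1$ (i.e. $j \lesssim N^{2-1/\gamma}k^{-1}$) is of lower order because $k \le K_\gamma$ and $\beta+2\gamma<1$. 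Then apply a multiplicative Chernoff bound: $\p(\de(k) \le \tfrac12\lambda_k) \le e^{-c\lambda_k}$ and $\p(\de(k) \ge 2\lambda_k)\le e^{-c\lambda_k}$. Since $\lambda_k \gtrsim (N/K_\gamma)^\gamma = (N^{1/(2-2\gamma)}/\log N)^\gamma$ grows polynomially in $N$ for every $k\le K_\gamma$, a union bound over the at most $N$ relevant values of $k$ still gives a probability tending to $0$, establishing both the $\Omega_{\p}(1)$ lower bound and the $O_{\p}(1)$ upper bound simultaneously.

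For part (b), condition on the neighbourhood of $k$: given the set $\cN(k)$ of neighbours of $k$, each $j \in \cN(k)$ with $j \not\sim k'$ for... more carefully, I would argue as follows. Restrict attention to neighbours $j$ of $k$ with $j$ large, say $j \ge \varepsilon N$ for a small constant $\varepsilon$; part (a) (applied with the same degree computation) shows that a positive proportion of the neighbours of $k$ lie in this range with high probability, since $\sum_{j \ge \varepsilon N} p_{k,j} \asymp (N/k)^\gamma \asymp \lambda_k$. For such a vertex $j$, the probability that $\de(j)=1$ given that $j\sim k$ equals $\prod_{\ell \neq j,k}(1-p_{j,\ell})$, which is bounded below by a constant: $\sum_{\ell} p_{j,\ell} \asymp (N/j)^\gamma \le \varepsilon^{-\gamma}$ is bounded when $j \ge \varepsilon N$, so the product is at least some $c(\varepsilon)>0$. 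The events $\{\de(j)=1\}$ for distinct large neighbours $j$ are not independent, but they depend on disjoint-enough edge sets that I can control the conditional second moment: conditionally on $\cN(k)$, $L_k \ge \sum_{j \in \cN(k), j\ge \varepsilon N}\1_{\{\de(j)=1\}}$ has conditional mean $\gtrsim \lambda_k$ and the covariance between two such indicators is $O(p_{j,j'}) = O(N^{2\gamma-1})$, which is negligible against the diagonal. A conditional Chebyshev (or Paley--Zygmund) bound then gives $L_k \ge c'\de(k)$ with conditional probability $1 - o(N^{-1})$ uniformly in the (polynomially large) value of $\de(k)$, and a union bound over $k \le K_\gamma$ finishes the proof.

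The main obstacle I anticipate is the uniformity over all $k \le K_\gamma$ in part (b): the number of big components is polynomially large, so the per-$k$ failure probabilities must beat $N^{-1}$, which forces quantitative concentration bounds (Chernoff/Bernstein rather than Chebyshev) for $L_k$. The cleanest route is probably to first expose the edges from $k$ (determining $\cN(k)$ and hence $\de(k)$, which is $\Theta((N/k)^\gamma)$ and in particular $\gg \log N$), and then observe that conditionally the indicators $\{\de(j)=1\}_{j\in\cN(k),\,j\ge\varepsilon N}$ are, up to the negligible mutual dependence through the $O(N^{2\gamma-1})$ edge probabilities among them, a collection of $\Theta(\de(k))$ independent-ish Bernoulli($\ge c(\varepsilon)$) variables; a Chernoff bound then yields failure probability $e^{-c\de(k)} = e^{-c(N/k)^\gamma}$, which is summable over $k\le K_\gamma$. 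One must also make sure the low-index neighbours $j$ (those with $p_{k,j}$ close to $1$, which have large expected degree and so are unlikely to be leaves) are a genuinely negligible fraction of $\cN(k)$ — this again uses $k \le K_\gamma$ together with $\beta+2\gamma<1$, exactly as in the degree computation of part (a).
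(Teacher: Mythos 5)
Your part (a) is correct and, I think, genuinely simpler than the paper's. You work with the Chung--Lu representative, note that $\de(k)$ is a sum of independent Bernoulli$(p_{k,j})$ variables with mean $\lambda_k=\Theta((N/k)^\gamma)$ (the truncation $\wedge 1$ is inactive since $\beta+2\gamma<1$ forces $\gamma<\tfrac12$), and apply a multiplicative Chernoff bound plus a union bound over $k\le K_\gamma$. This bypasses the machinery the paper uses: the paper proves~(a) through the MNR branching-process coupling (Poisson concentration for the root offspring of $\mathcal{T}^k$), and then, for the lower bound, has to separately argue that the total number $Z^{\rm thin}$ of thinned vertices across all big components is only $O((\log N)^3)$. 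Your route avoids thinning entirely and is self-contained; the paper's route reuses the coupling it needs elsewhere anyway.

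For part (b) you have the right structure, but there is a concrete gap in the concentration step, which you yourself flag. Restricting to neighbours $j\ge\varepsilon N$ and bounding $\mathbb{P}(\de(j)=1 \mid j\sim k)\ge c(\varepsilon)>0$ are both fine. The problem is the sentence ``the indicators $\{\de(j)=1\}$ are \ldots a collection of independent-ish Bernoulli variables; a Chernoff bound then yields failure probability $e^{-c\de(k)}$.'' Chernoff does not automatically survive ``independent-ish'' dependence, and as you correctly note, plain Chebyshev on $L_k$ does \emph{not} beat the union bound over $k\le K_\gamma$ when $\gamma$ is small (the per-$k$ failure probability $\sim 1/\de(k)\asymp(k/N)^\gamma$ sums to something of order $N^{(1-3\gamma)/(2-2\gamma)}$, which diverges for $\gamma<1/3$). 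So the exponential tail is genuinely needed, and it must be proved. The missing step is a decoupling: after exposing $\cN(k)$, consider for each $j\in\cN(k)\cap[\varepsilon N,N]$ the event that $j$ has no edge to $[N]\setminus(\cN(k)\cup\{k\})$. These events depend on disjoint edge sets, hence are conditionally independent, and each has probability $\ge c(\varepsilon)$, so Chernoff gives $e^{-c\de(k)}$ failure there. One then separately shows that the expected number of edges internal to $\cN(k)\cap[\varepsilon N,N]$ is $O(\de(k)^2/N)=O(N^{2\gamma-1}k^{-2\gamma})$, which after a union bound over $k\le K_\gamma$ is $o(1)$, so internal edges destroy at most $O_{\p}(1)$ of the candidate leaves. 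Without this decoupling the argument as written does not close. (Incidentally, for $j,j'\ge\varepsilon N$ the covariance is $O(p_{j,j'})=O(N^{-1})$, not $O(N^{2\gamma-1})$; this is in your favour but worth getting right.) The paper avoids all of this by staying in the branching-process picture: the children of the root of $\mathcal{T}^k$ have genuinely i.i.d.\ $\operatorname{Pois}(W_N^*)$ offspring counts, $\mathbb{P}(\operatorname{Pois}(W_N^*)=0)\ge e^{-\E W_N^*}>0$ by Jensen, and the binomial Chernoff bound applies without further ado, with the dependence issues absorbed once and for all into the thinning control already established for part~(a).
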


\begin{definition}\label{branch_defn} For $G_N \in \cG_{\beta,\gamma}$ and any component  $\sC \in \comps$, 
we define the set of \emph{branches} $\cB(\sC)$ of $\sC$ as the set of connected components of the subgraph of $\sC$ induced by the vertex set $V(\sC) \setminus \{ i\}$, where $i = \min V(\sC)$.
\end{definition}

We will use this definition specifically in the context when $\sC$ is a tree, so 
that this terminology makes sense.
The next lemma states that big components are trees and have branches that are small (at least when compared to the largest components of order $N^\gamma$).

\begin{lemma}\label{le:subcritical_branch_control}
For $G_N \in \cG_{\beta, \gamma}$ with $\beta+2\gamma<1$, with high probability 
every \emph{big} component is a tree. On this event we have that
\[
\max_{k \leq K_\gamma }
\max_{B \in \cB(\sC(k))}
\sum_{v \in B} \de(v)
=
\max_{k \leq K_\gamma}
\max_{B \in \cB(\sC(k))}
(2|B|-1)
=
O_{\mathbb{P}}^{\log N}\left( N^{\frac{\gamma}{2-2\gamma}} \right).
\]
\end{lemma}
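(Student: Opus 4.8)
The plan is to prove the lemma in two stages: first establish that big components are trees with high probability, and then control the branch sizes. For the first stage, note that a component $\sC(k)$ with $k \le K_\gamma$ that is \emph{not} a tree contains a cycle, hence has at least $|\sC(k)|$ edges on $|\sC(k)|$ vertices. The standard approach is a first-moment/exploration bound: since $\beta + 2\gamma < 1$ is strictly subcritical, the expected number of vertices $k \le K_\gamma$ whose component contains a cycle tends to $0$. Concretely, I would run the breadth-first exploration from vertex $k$ and compare it with a subcritical branching process (as set up in the branching-process section referenced in Remark~\ref{rem:IRG}); the probability that the exploration from $k$ ever closes a cycle while the component stays of size at most $\diam(G_N) = O_{\p}(\log N)$ generations deep (Proposition~\ref{prop:diameter}) is $O(\operatorname{polylog}(N)/(N/k)^{\text{const}})$ uniformly, and summing over $k \le K_\gamma = N^{\frac{1-2\gamma}{2-2\gamma}}\log N$ still gives $o(1)$ because $K_\gamma$ is a sufficiently small power of $N$. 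On the complementary event, every big component is a tree, and then $\sum_{v \in B}\de(v) = 2|E(B)| = 2(|B|-1) + 1 = 2|B|-1$ for each branch $B$ (the extra $+1$ coming from the single edge joining $B$ to the root $\min V(\sC(k))$), which gives the stated identity.

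For the second stage, conditionally on big components being trees, a branch $B \in \cB(\sC(k))$ is exactly a subtree hanging off a neighbour of the root $k' := \min V(\sC(k))$. The key point is that each such branch is explored starting from a single vertex $w \sim k'$ using only edges \emph{not} incident to $k'$, so it is dominated by a subcritical branching process started from one individual, independently of how large $\de(k')$ is. Hence $|B|$ is stochastically dominated by the total progeny of a subcritical branching process, which has exponential tails: $\p(|B| \ge m) \le e^{-cm}$ for some $c>0$ (again using the branching-process comparison from the cited section, valid uniformly in the starting vertex because the subcriticality parameter $\beta+2\gamma<1$ is uniform). The number of branches across all big components is at most $N$, so a union bound gives
\[
\p\Big( \max_{k \le K_\gamma} \max_{B \in \cB(\sC(k))} |B| \ge C \log N \Big) \le N e^{-cC\log N} \to 0
\]
for $C$ large, which already yields an $O_{\p}^{\log N}(1)$ bound — in fact far stronger than the claimed $O_{\p}^{\log N}(N^{\frac{\gamma}{2-2\gamma}})$. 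The reason the lemma states the weaker polynomial bound is presumably robustness: one only needs the crude union bound over the $O(N)$ branches together with a stretched-exponential (rather than exponential) tail, which is what a careless domination argument delivers. I would state it with the honest exponential tail and note the weaker bound suffices.

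The main obstacle is obtaining the exploration/branching-process domination \emph{uniformly over all $k \le K_\gamma$ and all branches simultaneously}, rather than for a fixed vertex. The subtlety is that the comparison with a Galton--Watson process is cleanest for the component of a fixed vertex, but here we want a statement quantified over a growing family of vertices and over the (random number of) branches within each explored component. I would handle this by first exposing the components of all vertices $k \le K_\gamma$ via a sequential exploration and showing that, with high probability, each such exploration can be coupled from below and above with independent subcritical branching processes whose offspring law is essentially a mixed-Poisson with the Chung--Lu weights (this is where the asymptotic-equivalence freedom of Remark~\ref{rem:IRG}(c) — picking the Norros--Reittu representative — is genuinely useful, since there the dominating branching process is literally a multi-type Poisson process and depth-$O(\log N)$ truncation errors are easy to control). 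Once the coupling is in place, the branch-size bound is an immediate consequence of the exponential progeny tails plus the union bound as above; the tree property follows from the vanishing expected number of cycle-closing steps during the first $O(\log N)$ generations of these explorations.
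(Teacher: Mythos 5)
Your second-stage argument contains a genuine error, and it points at the exact place where the polynomial bound in the lemma is sharp rather than wasteful. You assert that each branch $B$ is ``dominated by the total progeny of a subcritical branching process, which has exponential tails: $\p(|B|\ge m)\le e^{-cm}$,'' and you conclude that the true bound should be $O_{\p}^{\log N}(1)$. That is false. The dominating branching process has offspring distribution $D\sim \operatorname{Pois}(W_N^*)$, where $W_N^*$ is the size-biased weight, and $W_N^*$ (equivalently its limit $W^*$ in~\eqref{eq:W_density}) has a power-law tail of exponent $1/\gamma-1$. Consequently $\p(D=k)=\Theta(k^{-1/\gamma})$, and by Proposition~\ref{le:poisson_powerlaw} (invoking Jonsson--Stef\'ansson), the total progeny also satisfies $\p(|T|=k)=\Theta(k^{-1/\gamma})$ — a power law, not an exponential. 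Subcritical Galton--Watson progeny has exponential tails only when the offspring law has an exponential moment, which fails here precisely because the graph is scale-free. Your claim would imply that every branch of every big component has polylogarithmic size, but Proposition~\ref{prop:existence_simple_double_star} (and its use in the lower bounds of Theorems~\ref{class_subcrit} and~\ref{obl_subcrit}) exhibits, with high probability, adjacent vertices $x,y\le K_\gamma$ with $\de(x),\de(y)=\Theta_{\p}^{\log N}(N^{\gamma/(2-2\gamma)})$; then the branch of $\sC(x)$ containing $y$ has size at least $\de(y)$, which is polynomially large. So the $N^{\gamma/(2-2\gamma)}$ bound is tight up to polylogarithms, not a ``careless'' artefact.

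The correct second stage is exactly the union bound you propose, but carried out with the honest power-law tail: the paper bounds the number of branches over all big components by $J=O_{\p}(\sqrt{N}\log N)$ (via $\sum_{k\le K_\gamma}\de(\emptyset_k)$), dominates each by an independent $\operatorname{Pois}(\alpha W^*)$-GW tree via Lemma~\ref{le:domination}, and then applies the $k^{-1/\gamma}$ tail from Proposition~\ref{le:poisson_powerlaw} to get $J\cdot\p(|T_1|>N^{\gamma/(2-2\gamma)}\log N)=O(\sqrt{N}\log N \cdot (N^{\gamma/(2-2\gamma)}\log N)^{-(1/\gamma-1)})=o(1)$. The exponent $\gamma/(2-2\gamma)$ is precisely the threshold that makes this $o(1)$ given $J\asymp\sqrt{N}$. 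Your first stage (forest property via a first-moment bound on cycle creation, plus the $2|B|-1$ edge count) is in the right spirit and close to the paper's argument, which bounds the expected total surplus across big components using $\p(M=M')=\Theta(1/N)$; but you should replace the exponential tail claim with the power-law one and reinstate the union bound over $O(\sqrt{N}\log N)$ branches rather than $N$ to land on the stated exponent.
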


The following claim for the empirical moment of the degree distribution of $\sC(1)$ will allow us to demonstrate a lower bound on this component for certain parameters of both the classical and discursive models.

\begin{lemma}\label{le:empirical_moment}
For $G_N \in \cG_{\beta,\gamma}$ with $\beta+2\gamma<1$, then for any $\eta \geq 1$
\[
\sum_{v \in \mathscr{C}(1)}\operatorname{d}(v)^\eta =\Theta^{\log N}_{\mathbb{P}}\left( N^{\gamma\eta} \right).
\]
\end{lemma}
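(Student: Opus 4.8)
The claim is that $\sum_{v \in \sC(1)} \de(v)^\eta = \Theta^{\log N}_{\p}(N^{\gamma\eta})$ for every $\eta \geq 1$. The plan is to establish the two bounds separately, and the key intuition is that the component $\sC(1)$ is with high probability a tree (by Lemma~\ref{le:subcritical_branch_control}), so $|\sC(1)|$ and $\sum_{v} \de(v)$ are within a constant factor of each other, and the degree sequence is dominated by the single hub vertex $1$, whose degree is $\Theta_\p(N^\gamma)$ by Proposition~\ref{prop:star_degrees}.

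\emph{Lower bound.} This is immediate: since $1 \in \sC(1)$, we have $\sum_{v \in \sC(1)} \de(v)^\eta \geq \de(1)^\eta = \Omega_\p(N^{\gamma\eta})$ by the lower bound in Proposition~\ref{prop:star_degrees} (applied with $k=1$), and $\Omega_\p$ is stronger than $\Omega^{\log N}_\p$.

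\emph{Upper bound.} Here I would split the sum as $\de(1)^\eta + \sum_{v \in \sC(1), v \neq 1} \de(v)^\eta$. The first term is $O_\p(N^{\gamma\eta})$, again by Proposition~\ref{prop:star_degrees}. For the second term, I would first use the event from Lemma~\ref{le:subcritical_branch_control} that $\sC(1)$ is a tree (as $1 \leq K_\gamma$ trivially), so $V(\sC(1)) \setminus \{1\}$ decomposes into the branches $B \in \cB(\sC(1))$, and within each branch the degrees (computed in $\sC(1)$, hence in $G_N$ since the component is induced) differ from the in-branch degrees by at most one extra edge to the hub — only the unique vertex of the branch adjacent to $1$ gets $+1$. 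Then for a vertex $v \neq 1$ in a branch $B$ I would bound $\de(v) \leq 2|B|$ crudely (a tree on $|B|$ vertices, plus possibly one edge to the hub), so $\sum_{v \in B} \de(v)^\eta \leq |B| \cdot (2|B|)^\eta = 2^\eta |B|^{\eta+1}$. By Lemma~\ref{le:subcritical_branch_control}, $\max_{B \in \cB(\sC(1))} |B| = O^{\log N}_\p(N^{\gamma/(2-2\gamma)})$, and $\sum_{B} |B| \leq |\sC(1)| \leq \sum_{v \in \sC(1)} \de(v) = O^{\log N}_\p(N^\gamma)$ by Proposition~\ref{prop:big_sum_of_degrees} with $k=1$ (or directly since the tree has $\le \de(1)+\ldots$ vertices). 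Combining,
\[
\sum_{v \in \sC(1), v \neq 1} \de(v)^\eta \leq \sum_{B \in \cB(\sC(1))} 2^\eta |B|^{\eta+1} \leq 2^\eta \Big(\max_B |B|\Big)^\eta \sum_B |B| = O^{\log N}_\p\big( N^{\eta\gamma/(2-2\gamma)} \cdot N^\gamma \big).
\]
Since $\beta + 2\gamma < 1$ forces $\gamma < 1/2$, we have $\gamma/(2-2\gamma) < \gamma$, hence the exponent $\eta\gamma/(2-2\gamma) + \gamma < \eta\gamma + \gamma \le 2\eta\gamma$; more to the point, for $\eta \ge 1$ one checks $\eta\gamma/(2-2\gamma) + \gamma \le \eta\gamma$ iff $\gamma/(2-2\gamma) \le \gamma(\eta-1)/\eta \cdot \frac{\eta}{\eta}$... so I would instead simply note $\eta\gamma/(2-2\gamma)+\gamma \le \eta\gamma + \gamma$ and, since we only need a $\Theta^{\log N}_\p$ statement, it suffices that this exponent does not exceed $\eta\gamma$; this holds precisely because $\frac{1}{2-2\gamma} < 1$ gives $\eta\gamma/(2-2\gamma) \le \eta\gamma - \gamma$ whenever $\eta\gamma(1 - \frac{1}{2-2\gamma}) \ge \gamma$, i.e. $\eta(1-2\gamma) \ge 2-2\gamma$, which may fail for $\eta$ close to $1$. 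The clean fix: bound $\de(v) \le |B| + 1 \le 2|B|$ as above but then also use $|B| \le C := \max_B|B|$ and $\sum_B |B|^{\eta+1} \le C^{\eta} \sum_B |B|$ — this is what I wrote — and accept the resulting bound $N^{\gamma + \eta\gamma/(2-2\gamma)}$, then observe this is $o(N^{\eta\gamma})$ up to polylog exactly when $\gamma + \eta\gamma/(2-2\gamma) < \eta\gamma$, which rearranges to $\eta > \frac{2-2\gamma}{1-2\gamma}$. For smaller $\eta$ the hub term $\de(1)^\eta = \Theta_\p(N^{\eta\gamma})$ may not dominate the branch contribution, so a sharper branch bound is needed.

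\emph{The main obstacle} is therefore making the branch contribution genuinely negligible for \emph{all} $\eta \geq 1$, including $\eta$ near $1$. The fix is to avoid the wasteful bound $\de(v) \le 2|B|$ and instead use that branches are trees, so $\sum_{v \in B}\de(v) = 2|B| - 2$ (edges within $B$) plus at most one edge to the hub, giving $\sum_{v\in B}\de(v) \le 2|B|-1$; then by the power-mean / convexity inequality applied within a branch, $\sum_{v\in B}\de(v)^\eta \le \big(\max_{v \in B}\de(v)\big)^{\eta-1} \sum_{v\in B}\de(v) \le (2|B|-1)^{\eta-1}(2|B|-1) = (2|B|-1)^\eta$, and summing over branches, $\sum_{v\ne 1}\de(v)^\eta \le \sum_B (2|B|-1)^\eta \le (\max_B(2|B|-1))^{\eta-1}\sum_B(2|B|-1) = O^{\log N}_\p\big(N^{(\eta-1)\gamma/(2-2\gamma)} \cdot N^\gamma\big)$, using $\sum_B(2|B|-1) \le \sum_{v \in \sC(1)}\de(v) = O^{\log N}_\p(N^\gamma)$. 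Now the exponent is $\gamma + (\eta-1)\gamma/(2-2\gamma)$, and since $\frac{1}{2-2\gamma} < 1$ we get $\gamma + (\eta-1)\gamma/(2-2\gamma) < \gamma + (\eta-1)\gamma = \eta\gamma$ for every $\eta \ge 1$ (strict for $\eta > 1$, equality route closed for $\eta = 1$ where the sum is just $\sum\de(v) = \Theta^{\log N}_\p(N^\gamma)$ directly). Hence in all cases $\sum_{v \in \sC(1)}\de(v)^\eta = \de(1)^\eta + O^{\log N}_\p(N^{\gamma + (\eta-1)\gamma/(2-2\gamma)}) = \Theta^{\log N}_\p(N^{\eta\gamma})$, completing the proof. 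Throughout one should remember that all these estimates hold on a single high-probability event (intersecting the events of Propositions~\ref{prop:big_sum_of_degrees}, \ref{prop:star_degrees} and Lemma~\ref{le:subcritical_branch_control}), which is what underlies the $\Theta^{\log N}_\p$ conclusion.
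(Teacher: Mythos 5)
Your final argument (the ``clean fix'') is correct, but it takes a genuinely different route from the paper's. The paper proves the upper bound by going back to the Galton--Watson coupling: on the high-probability event that $|\mathcal{T}^1| = O^{\log N}_{\mathbb{P}}(N^\gamma \log N)$, it dominates $\sum_{v\ne \emptyset_1}\de(v)^\eta$ by a sum of i.i.d.\ copies of $D^\eta$ with $D\sim 1+\operatorname{Pois}(W_N^*)$, explicitly computes $\mathbb{E}((W_N^*)^\eta)$ in the three regimes relative to $1/\gamma-1$, and closes with Markov's inequality. You instead stay entirely inside the already-proved structural lemmas: you use that $\sC(1)$ is a tree and decompose it into branches (Lemma~\ref{le:subcritical_branch_control}), then use the H\"older-type step $\sum_{v\in B}\de(v)^\eta\le(\max_{v\in B}\de(v))^{\eta-1}\sum_{v\in B}\de(v)\le(2|B|-1)^\eta$ together with $\max_B(2|B|-1)=O^{\log N}_{\mathbb{P}}(N^{\gamma/(2-2\gamma)})$ and $\sum_B(2|B|-1)\le\sum_{v\in\sC(1)}\de(v)=O^{\log N}_{\mathbb{P}}(N^\gamma)$ from Proposition~\ref{prop:big_sum_of_degrees}, and finish by noting $\gamma+(\eta-1)\gamma/(2-2\gamma)\le\eta\gamma$ because $\gamma<1/2$. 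Your route has the virtue of being purely combinatorial given the structural lemmas and avoids re-deriving a moment estimate for $W_N^*$; the paper's route is self-contained at the branching-process level and would adapt more directly to variants where the branch bound is unavailable. Two small presentation points: the first attempt with the crude bound $\de(v)\le 2|B|$ indeed fails for $\eta$ near $1$ (as you noticed), so in a final write-up you should present only the corrected argument; and it is worth stating explicitly that $\max_{v\in B}\de(v)\le|B|$ (tree degree plus one hub edge), which makes the chain $\max_{v\in B}\de(v)\le 2|B|-1$ transparent.
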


Two of our lower bounds require the existence of a  `double star' component together with a suitable bound on the empirical moment.

\begin{proposition}\label{prop:existence_simple_double_star}
For $G_N \in \cG_{\beta,\gamma}$ with $\beta+2\gamma<1$ there exists with high probability a tree component containing two adjacent vertices $x,y \in K_\gamma$ such that
\[
\de(x) \mbox{ and }  \de(y) \mbox{ are }\Theta_{\mathbb{P}}^{\log N} \left(
N^\frac{\gamma}{2-2\gamma}
\right)
\]
and further for any $\eta\geq 1$ 
\[
\sum_{v \in \sC(x)} \de(v)^\eta = \Theta_{\mathbb{P}}\left(
N^\frac{\gamma \eta}{2-2\gamma}
\right).
\]
\end{proposition}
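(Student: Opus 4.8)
The plan is to carry out a second-moment / Poisson approximation argument to show that the required double-star configuration appears, and then to read off the degree and empirical-moment asymptotics from the structural results already established. Concretely, set $m := N^{\frac{1-2\gamma}{2-2\gamma}}$ (so that $K_\gamma = m\log N$ up to the logarithmic factor, and the target degree scale is $(N/m)^\gamma = N^{\frac{\gamma}{2-2\gamma}}$). First I would fix a window of "heavy" labels, say $I := \{i : \tfrac12 m \le i \le m\}$, and for $x \ne y \in I$ let $A_{x,y}$ be the event that $\{x,y\}\in E(G_N)$. Since for a representative of $\mathcal G_{\beta,\gamma}$ we may take $q_{i,j}$ comparable to $p_{i,j} = \beta N^{2\gamma-1}/(i^\gamma j^\gamma)\wedge 1$ (using Remark~\ref{rem:IRG}(c) to choose a convenient model), we have $\p(A_{x,y}) \asymp N^{2\gamma-1} m^{-2\gamma} = N^{-\frac{1-2\gamma}{2-2\gamma}\cdot\frac{1}{?}}$; the point is only that $\sum_{x<y\in I}\p(A_{x,y}) \to \infty$ while pairwise correlations are negligible, so by a standard second-moment argument with high probability there is at least one adjacent pair $x,y\in I$. (Alternatively one can appeal directly to the Poisson limit for the number of such edges, as in Chapter 6 of \cite{van2016random}.)

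Next, conditionally on such a pair $(x,y)$ existing, I would control the degrees and the component. The degree bounds $\de(x),\de(y) = \Theta_{\p}^{\log N}(N^{\frac{\gamma}{2-2\gamma}})$ follow from Proposition~\ref{prop:star_degrees} applied to the labels $x,y \le m \le K_\gamma$, since $(N/x)^\gamma \asymp (N/y)^\gamma \asymp N^{\frac{\gamma}{2-2\gamma}}$ for $x,y\in I$; I must only note that conditioning on the single edge $\{x,y\}$ being present changes the degrees by at most $1$ and is harmless. That $\sC(x)=\sC(y)$ is a tree component follows from Lemma~\ref{le:subcritical_branch_control}, which says all big components are trees w.h.p., and $x,y\le K_\gamma$ means $\sC(x)$ is big. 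For the empirical moment $\sum_{v\in\sC(x)}\de(v)^\eta$, the leading contribution is exactly $\de(x)^\eta + \de(y)^\eta = \Theta_{\p}^{\log N}(N^{\frac{\gamma\eta}{2-2\gamma}})$; the remaining vertices in $\sC(x)$ are distributed among the branches hanging off $x$ and off $y$, and by Lemma~\ref{le:subcritical_branch_control} each branch $B$ satisfies $\sum_{v\in B}\de(v) = 2|B|-1 = O_{\p}^{\log N}(N^{\frac{\gamma}{2-2\gamma}})$, so in particular every vertex $v\ne x,y$ in $\sC(x)$ has $\de(v) = O_{\p}^{\log N}(N^{\frac{\gamma}{2-2\gamma}})$ and $\sum_{v\ne x,y}\de(v) = O_{\p}^{\log N}(N^{\frac{\gamma}{2-2\gamma}})$ as well (the total size of $\sC(x)$ is itself $O_{\p}^{\log N}(N^{\frac{\gamma}{2-2\gamma}})$ by Proposition~\ref{prop:big_sum_of_degrees}). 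Hence $\sum_{v\ne x,y}\de(v)^\eta \le (\max_{v\ne x,y}\de(v))^{\eta-1}\sum_{v\ne x,y}\de(v) = O_{\p}^{\log N}(N^{\frac{\gamma\eta}{2-2\gamma}})$, which is of the same polynomial order, giving the stated $\Theta_{\p}$ (absorbing logarithmic factors requires a little care, but the lower bound is immediate from the $\de(x)^\eta$ term alone and the upper bound is the display above).

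The main obstacle I anticipate is not any single estimate but the interaction between the \emph{conditioning} on the double-star pair and the \emph{uniform} structural statements of Section~\ref{sec:structure}: Proposition~\ref{prop:star_degrees} and Lemma~\ref{le:subcritical_branch_control} are ``with high probability'' statements about the unconditioned graph, and I need them to remain valid after conditioning on the low-probability event $\{\exists\, x,y\in I : x\sim y\}$. The clean way to handle this is to avoid conditioning altogether: work on the intersection of (i) the high-probability event $\mathcal E$ on which all of Propositions~\ref{prop:diameter}, \ref{prop:big_sum_of_degrees}, \ref{prop:star_degrees} and Lemma~\ref{le:subcritical_branch_control} hold, and (ii) the high-probability event that a pair $x,y\in I$ with $x\sim y$ exists — then on $\mathcal E$ the degree and branch bounds apply to \emph{this particular} pair with no conditioning needed, and the two events have probability tending to $1$ by a union bound. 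The only genuinely new probabilistic input is therefore the existence statement (i.e. the first-moment lower bound $\sum_{x<y\in I}\p(A_{x,y})\to\infty$ together with a second-moment bound), and checking that the asymptotic equivalence of models in $\mathcal G_{\beta,\gamma}$ lets us compute this with $q_{i,j}$ replaced by $p_{i,j}$.
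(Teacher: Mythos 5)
Your overall strategy — exhibit a dense pair of high-index labels connected by an edge, use Remark~\ref{rem:IRG}(c) to pass to a convenient model, then invoke the already-established uniform structural statements on a single high-probability event rather than conditioning — is the same as the paper's, and your observation about avoiding conditioning is exactly the right way to make the argument clean. The treatment of the empirical moment via the branch bound of Lemma~\ref{le:subcritical_branch_control} together with Proposition~\ref{prop:big_sum_of_degrees} is also a correct variant (the paper instead re-runs the argument of Lemma~\ref{le:empirical_moment} against the explicit Galton--Watson construction).

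However, there is a genuine gap in the existence step, and it is precisely at the point where you left a ``$?$'' in your calculation. With $m := N^{\frac{1-2\gamma}{2-2\gamma}}$ and your window $I = \{i : m/2 \le i \le m\}$ of size $\Theta(m)$, one has $p_{i,j} \asymp N^{2\gamma-1} m^{-2\gamma}$ for $i,j \in I$, and hence
\[
\sum_{x<y \in I} \p(A_{x,y}) \;\asymp\; m^2 \cdot N^{2\gamma-1} m^{-2\gamma}
\;=\; m^{2-2\gamma} N^{2\gamma-1}
\;=\; N^{1-2\gamma}\cdot N^{2\gamma-1}
\;=\; \Theta(1),
\]
which does \emph{not} diverge. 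A second-moment (or Poisson) argument on your window therefore yields only that an edge inside $I$ exists with probability bounded away from $0$ and from $1$, not with high probability. This is why the paper instead takes the window $I = \left[N^{\frac{1-2\gamma}{2-2\gamma}},\, N^{\frac{1-2\gamma}{2-2\gamma}}\log N\right]$: inflating the width by a factor of $\log N$ keeps all labels in $I$ at the same polynomial degree scale (so Propositions~\ref{prop:star_degrees} and~\ref{prop:big_sum_of_degrees} still give $\Theta^{\log N}_{\p}(N^{\gamma/(2-2\gamma)})$ for $\de$ and for $\sum \de$), while the expected number of internal edges becomes $\Theta(\log^{2-2\gamma} N)\to\infty$. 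Since in the MNR multigraph this count is exactly Poisson (and in the simple models one can use independence of edges), the existence with high probability then follows; with your window it does not. Widening $I$ by any function tending to infinity would also work, but widening by a constant factor does not — the cancellation $m^{2-2\gamma} N^{2\gamma-1} = 1$ is exact at the scale $m$.

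Two further minor remarks. First, once the edges are independent (as in any of the simple models of $\cG_{\beta,\gamma}$) or Poisson (in MNR), there is no need for a second-moment argument at all: you may read off the probability that the edge count is zero directly. Second, be aware that the estimates you assemble for $\sum_{v\in\sC(x)}\de(v)^\eta$ give $\Theta^{\log N}_{\p}$, not the sharper $\Theta_{\p}$ that appears in the statement; your parenthetical acknowledgment of this is appropriate, and the paper's own proof (via Lemma~\ref{le:empirical_moment}) in fact also only yields $\Theta^{\log N}_{\p}$.
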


The final proposition of this section states that we can always find a ``long double star'' in $G_N$, i.e. two vertices with degree of order at least $N^{\gamma/(2-2 \gamma)}$ that are connected via a short path with two intermediate vertices of degree $2$ each. The path having length at least $3$ is important for the discursive voter model dynamic. 

\begin{proposition}\label{prop:existence_long_double_star}
With high probability any $G_N \in \cG_{\beta,\gamma}$ with $\beta + 2\gamma <1$ contains a path
$\mathcal{P}=(v_1,v_2,v_3,v_4)$
such that:
\begin{enumerate}[label={(\alph*)},ref={\theproposition~(\alph*)}]
\item $\de(v_2)=\de(v_3)=2$. 
\item $\{ v_1, v_4 \} \subset [K_\gamma]$ (and hence the component is a tree)
\item $\de(v_1), \de(v_4) =\Theta_{\mathbb{P}}^{\log N} \left(
N^\frac{\gamma}{2-2\gamma}
\right).$
\end{enumerate}
\end{proposition}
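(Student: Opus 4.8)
\textbf{Proof proposal for Proposition~\ref{prop:existence_long_double_star}.}

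The plan is to produce the desired path by a first‑moment/second‑moment argument on a carefully chosen family of candidate configurations, then glue it to the high‑probability structural facts already established. Fix a small $\varepsilon>0$ and set $m := \lfloor N^{\gamma/(2-2\gamma)}\rfloor$. I would look for candidate index pairs $v_1, v_4$ of the form $v_1, v_4 \le N^{\frac{1-2\gamma}{2-2\gamma}(1+\varepsilon)}$ — deep enough in $[K_\gamma]$ that, by Proposition~\ref{prop:star_degrees}, both have degree $\Theta_{\mathbb P}^{\log N}(N^{\gamma/(2-2\gamma)})$, but with a whole polynomial window of such indices available. Among these, I then want a pair joined by a path through two freshly chosen ``private'' vertices $v_2, v_3$ of high index (say $v_2, v_3 \in (N/2, N]$), requiring the four edges $\{v_1,v_2\},\{v_2,v_3\},\{v_3,v_4\}$ present and, crucially, $v_2$ and $v_3$ having degree exactly $2$, i.e.\ no other incident edges. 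The point of taking $v_2,v_3$ of index of order $N$ is that their expected degree is $\Theta(1)$ (indeed, $\sum_j p_{v_2,j} = \Theta(1)$), so the event $\de(v_2)=\de(v_3)=2$ conditioned on the three path edges has probability bounded below by a positive constant, uniformly.

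Step one: compute the expected number $Z$ of such configurations. For a single candidate $4$‑tuple, $\mathbb P(\text{three path edges present}) = p_{v_1,v_2}p_{v_2,v_3}p_{v_3,v_4} \approx \beta^3 N^{6\gamma-3} (v_1 v_4)^{-\gamma}(v_2 v_3)^{-2\gamma}$, and the extra factor from ``$v_2,v_3$ have no other incident edges'' is $\Theta(1)$ by the previous paragraph (using $\prod_{j\ne v_1,v_3}(1-p_{v_2,j}) = \exp(-\Theta(1))$, and similarly for $v_3$). Summing over $v_2, v_3 \in (N/2,N]$ (that is $\Theta(N^2)$ choices, each contributing $\Theta(N^{-4\gamma})$ from the $(v_2v_3)^{-2\gamma}$ factor) and over $v_1,v_4$ in the window $[1, N^{\frac{1-2\gamma}{2-2\gamma}(1+\varepsilon)}]$ (that is $\Theta(N^{\frac{(1-2\gamma)(1+\varepsilon)}{2-2\gamma}})$ choices each, contributing $\Theta(N^{-\gamma})$ per index since $v_1^{-\gamma}$ is largest for small $v_1$... one must be careful, the sum $\sum_{v_1} v_1^{-\gamma}$ over the window is dominated by the small indices and is $\Theta(1)$) gives $\mathbb E(Z) = \Omega(N^{2 + 6\gamma - 3 - 4\gamma}) \cdot \Theta(1) = \Omega(N^{2\gamma -1})$. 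Since $2\gamma - 1 < 0$ this naive count is $o(1)$, which tells me I must \emph{not} insist $v_2,v_3$ be of index order $N$: instead I should let $v_2, v_3$ range over an interval $[aN, N]$ but also — to win the exponent — allow the private vertices to be of lower index. Recompute: taking $v_2, v_3 \in [1, M]$ for a threshold $M$ to be optimised, the edge $\{v_2,v_3\}$ has probability $\beta N^{2\gamma-1}(v_2 v_3)^{-\gamma}\wedge 1$; one needs the degree‑$2$ constraint, which costs a factor $\Theta(1)$ only while the expected degree of $v_2$ stays $O(1)$, i.e.\ while $v_2 \gtrsim N^{(2\gamma-1)/\gamma +1} = N^{\gamma'}$ for the appropriate exponent. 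Optimising $M$ against this constraint is exactly the computation that pins down why the threshold $N^{\gamma/(2-2\gamma)}$ and the index window $[K_\gamma]$ appear; I would carry it out so that $\mathbb E(Z) \to \infty$ polynomially.

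Step two: a second‑moment bound. Two candidate configurations that share no vertices are \emph{negatively} correlated for the "degree exactly $2$" constraints (edges are independent, and conditioning a far‑away vertex to have few edges only helps), and positively correlated configurations sharing one or more of the four vertices contribute a lower‑order sum provided $\mathbb E(Z)\to\infty$; this is the standard computation (cf.\ the proof of Proposition~\ref{prop:existence_simple_double_star}), and I expect $\mathrm{Var}(Z) = o(\mathbb E(Z)^2)$, hence $Z \ge 1$ whp by Chebyshev. Step three: on the whp event that $Z\ge1$, pick one such path $\mathcal P = (v_1,v_2,v_3,v_4)$; conditions (a) and (b) hold by construction (and $\{v_1,v_4\}\subset[K_\gamma]$ forces the component to be a tree by Lemma~\ref{le:subcritical_branch_control}), while condition (c), the two‑sided degree bound $\de(v_1),\de(v_4) = \Theta_{\mathbb P}^{\log N}(N^{\gamma/(2-2\gamma)})$, follows from Proposition~\ref{prop:star_degrees} applied uniformly over $[K_\gamma]$, intersecting the two whp events.

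The main obstacle is the second‑moment estimate in the presence of the ``degree exactly $2$'' constraints on $v_2$ and $v_3$: one must show that requiring two disjoint private‑vertex pairs to each have no extra incident edges does not destroy the near‑independence, and — more delicately — that configurations sharing a single high‑index endpoint $v_1$ (of which there are many, since $v_1$ has polynomially large degree) do not blow up the second moment. I would handle the latter by noting that the extra edges at $v_1$ needed for a second configuration are a fresh, independent set of edges, so the conditional expected number of such ``sibling'' configurations is again controlled by the same $\mathbb E(Z)$‑type sum restricted to one fewer free endpoint, which is $o(\mathbb E(Z))$ by the exponent count; making this rigorous and uniform is where the real work lies. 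A cleaner alternative, which I would fall back on if the variance bound gets messy, is to expose the graph in rounds: first condition on the high‑index vertices $v_1,v_4$ and their degrees (via Proposition~\ref{prop:star_degrees}), then among the $\Theta(N^{\gamma/(2-2\gamma)})$ leaves of $v_1$ guaranteed by Proposition~\ref{prop:leaf_counts} and similarly for $v_4$, show that with probability bounded away from $0$ some leaf $v_2$ of $v_1$ and some leaf $v_3$ of $v_4$ become joined by an edge $\{v_2,v_3\}$ while retaining degree $2$ — and boost this to a whp statement by the polynomial number of essentially independent $(v_1,v_4)$ candidates in $[K_\gamma]$.
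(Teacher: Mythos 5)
Your primary first/second-moment route does not go through as written, and neither issue is cosmetic. First, the expected-count calculation contains an arithmetic slip: $\sum_{v_1\leq M} v_1^{-\gamma}$ is $\Theta(M^{1-\gamma})$ for $\gamma<1$, not $\Theta(1)$. Plugging in $M=K_\gamma$ (your window with $\varepsilon=0$) together with your original restriction $v_2,v_3\in(N/2,N]$ actually gives $\mathbb{E}(Z)=\Theta((\log N)^{2-2\gamma})$, which diverges — so the ``optimise $M$'' detour is unnecessary — but it diverges only polylogarithmically, and there is no room to enlarge the endpoint window by a fixed $\varepsilon>0$ without leaving $[K_\gamma]$ and breaking conditions (b)--(c). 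Second, and decisively, the second-moment bound is genuinely required and you have not established it; with only $\Theta(\log^{2-2\gamma}N)$ expected configurations, overlapping configurations could plausibly ruin $\mathrm{Var}(Z)=o(\mathbb{E}(Z)^2)$, and you have flagged this as ``where the real work lies'' without doing it.

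The paper bypasses the variance estimate entirely by a different device. It works in the Norros--Reittu multigraph, where edge counts between distinct pairs are independent Poissons, and splits both the high-degree index set $I=[N^{\frac{1-2\gamma}{2-2\gamma}},K_\gamma]$ and the constant-weight set $V=\{v:w(v)<1\}$ by parity into $I^{\rm even},I^{\rm odd},V^{\rm even},V^{\rm odd}$. The three relevant edge exposures, $I^{\rm even}$--$V^{\rm even}$, $I^{\rm odd}$--$V^{\rm odd}$, and $V^{\rm even}$--$V^{\rm odd}$, run over disjoint vertex pairs and are therefore mutually independent. One can then identify $\Theta_{\mathbb{P}}(\sqrt{N}\log^{1-\gamma}N)$ vertices $\cE\subset V^{\rm even}$ and $\cO\subset V^{\rm odd}$ receiving a \emph{unique} edge into the matching parity of $I$ (using a union bound to kill multi-edges), and finally find an $\cO$--$\cE$ edge whose endpoints have no further edges by a binomial lower bound over $\omega_{\mathbb{P}}(\sqrt{N})$ independent trials. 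Your ``expose in rounds'' fallback is in the right spirit — replace a second moment by a constructive sprinkling argument — but it lacks this parity split (or an equivalent decoupling device): conditioning on the leaf/edge structure around $v_1$ leaks information about the candidate leaves of $v_4$ and about the degree-$2$ constraints at $v_2,v_3$, and invoking ``essentially independent $(v_1,v_4)$ candidates'' is precisely the uniformity you would need to prove. As written, the argument has a genuine gap.
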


In the remaining part of this section, we will prove these results. An essential tool will be a coupling 
with a branching process that we set up in Section~\ref{Branching_section}. Then in Section~\ref{subcrit_cpts_section} we will prove the structural results stated above.

\subsection{Coupling with a branching process}\label{Branching_section}

By Remark~\ref{rem:IRG}(c), we have some flexibility for which model in the class $\mathcal{G}_{\beta,\gamma}$ to show our results. For most of our proofs, we will prove the statements for the simple Norros-Reittu (SNR) model, i.e.\ where
edges are present independently with probabilities
\begin{equation}\label{nr_edge_probabilities}
q_{i,j}=1-e^{-p_{i,j}}. 
\end{equation}
The reason for this choice is the close relation with the standard multigraph Norros-Reittu  (MNR) model. 

In the MNR multigraph $G_N^{\rm NR}$ each vertex $i \in [N]$ has weight $w(i)>0$ and independently for each pair $\{i,j\}$ with $i,j \in [N]$, the number of edges between $i$ and $j$ has the distribution
\[
\operatorname{
Pois}\left(\frac{w(i) w(j)}{w([N])}\right),
\]
where $w([N])=\sum_{i=1}^N w(i)$ is the total weight and where we write ${\rm Pois}(\mu)$ for a Poisson distribution with parameter $\mu > 0$. Note this graph model not only has multiple edges, but also allows for self-loops.

The SNR model with edge probability as in~\eqref{nr_edge_probabilities} is then obtained by first choosing
\begin{equation}\label{weight_defn}
w(i):=\sum_{j=1}^N \beta N^{2\gamma-1} i^{-\gamma} j^{-\gamma}
\sim \frac{\beta}{1-\gamma}\left( \frac{N}{i} \right)^\gamma ,
\end{equation}
and then collapsing all multi-edges to simple edges and deleting the loops.

The MNR model is particularly nice, because it allows for an exact coupling with a 
two-stage Galton-Watson process with thinning and cycle creation.
Our construction here extends the coupling introduced in~\cite{norros2006conditionally} (see also  \cite{van2016random}) by also keeping track of the number of edges, so that we can also control when we create cycles. 

Define  the \emph{mark distribution} to be the random variable $M$ on $[N]$ which chooses a vertex biased proportional to its weight
\[
\mathbb{P}(M=m)\propto w(m) \mathbbm{1}_{m\in [N]}
\propto m^{-\gamma} \mathbbm{1}_{m\in [N]}
\]
so that if $W_N$ is the empirical weight distribution in the network, the weight of a typical neighbour in our local picture will be simply the size-biased version of $W_N$, denoted $W_N^*$
\[
w(M)\stackrel{({\rm d})}{=}W_N^*.
\]

Fix $k \in [N]$, we now describe the (marked) branching process that describes the cluster exploration when started from a vertex $k$. To describe the branching process, we label the tree vertices using the standard Ulam-Harris notation, in 
particular we denote by $\emptyset$ the root of the tree, by $1$ the first offspring of the root, by $11$ the first offspring of tree vertex $1$ etc. We will write $v < w$ if $v$ comes first in the breadth-first ordering of the tree, i.e.\ 
vertices are first sorted according to length and then according to lexicographical ordering if the lengths are the same.

For the root of the branching process, we define
\[
M_{\emptyset}=k , \ X_{\emptyset} \sim \operatorname{Pois}\left(w(k)\right).
\]
Next, we define independent random variables  $\left(X_v\right)_{v \neq \emptyset}$ in two stages:
we first choose marks $\left(M_v\right)_{v \neq \emptyset}$ which are i.i.d.\ with the same distribution as  $M$.
Then, conditionally on $M_v$, let $X_v \sim \operatorname{Pois}\left(w(M_v)\right)$.
where we write  
${\rm Pois}(Y)$ for the mixed Poisson law with  random mixing parameter $Y$. 

Moreover, if we take $X_v$ to be the number of children of vertex $v$ (if it exists in the tree), this construction can be used 
to define  a (marked) Galton-Watson tree $\mathcal{T}^k$ (where only the root has a different offspring distribution).

To obtain the cluster at $k$ in $G_N$ from $\mathcal{T}^k$, we introduce a thinning procedure. 
We set $\emptyset$ to be unthinned and then explore the tree in the breadth-first order described above  and thin a tree vertex $w$ if either one of the tree vertices in the unique path between $\emptyset$ and $w$ has been thinned or if 
there exists an unthinned $v < w$ with $M_v = M_w$. 

Now, denote for $i \in [N]$, $X_v(i)$ to be the number of children of $v$ with mark $i$. 
If $v$ and $w$ are unthinned tree vertices, then we define
\begin{equation}\label{eq:edges} E(M_v, M_w) = \left\{ \begin{array}{ll} X_v(M_w) &\mbox{if } v < w ,\\
X_w(M_v) &\mbox{if } w \leq v .\end{array} \right. \end{equation}
We can  define the multigraph $\mathcal{T}^k_{\rm thin}$ by specifying that the vertex set is 
$\{ M_v \, : \, v \mbox{ unthinned}\}$ and the number of edges are given by
$( E(M_v, M_w) \, : \, v, w \mbox{ unthinned} )$.

Similarly, we can define a forest $(\mathcal{T}^1, \mathcal{T}^2, \ldots, \mathcal{T}^n)$ of independent trees constructed as above, where the root of the $k$th tree has mark $k$. Then, we can define the same thinning operation as above, starting in the tree $\mathcal{T}^1$ and going to the next when the algorithm terminates, where now also the roots of the trees may be thinned if their label has appeared in a previous tree.
If we define the edges as in~\eqref{eq:edges}, then we obtain a mulitgraph $(\mathcal{T}^1, \mathcal{T}^2, \ldots, \mathcal{T}^n)_{\rm thin}$ with vertex set $\{ M_v \, : \, v \mbox{ unthinned} \} = [N]$ and the number of edges between $i$ and $j$ given as $E(M_v, M_w)$, where $v$ and $w$ are the unique unthinned vertices $v, w$ with $M_v = i$ and $M_w =j$.

With this construction, we have the following proposition. 

\begin{proposition}\label{prop:tree_coupling}
Let $G_N^{\rm NR}$ be a realization of a Norros-Reittu mulitgraph.
For any fixed vertex $k \in [N]$, we have for the component $\sC(k)$ in $G_N^{\rm NR}$ containing $k$, 
\[ \sC(k) \stackrel{d}{=} \mathcal{T}^k_{\rm thin} . \]
Moreover, 
\[ G_N^{\rm NR} \stackrel{d}{=}(\mathcal{T}^1, \mathcal{T}^2, \ldots, \mathcal{T}^n)_{\rm thin} . \]
\end{proposition}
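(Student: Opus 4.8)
The plan is to construct $G_N^{\rm NR}$ via a breadth-first exploration and to check that the randomness generated along the way has exactly the law of the thinned marked Galton--Watson construction. I will first prove the single-component identity $\sC(k)\stackrel{d}{=}\mathcal{T}^k_{\rm thin}$ and then deduce the forest statement by running the same exploration from the roots $1,2,\dots,n$ in turn. The key combinatorial tool is the Poisson splitting (colouring) theorem, and the key probabilistic fact is that the edge-multiplicities of the MNR model are independent, so that revealing some of them leaves the rest untouched.

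To prove $\sC(k)\stackrel{d}{=}\mathcal{T}^k_{\rm thin}$: explore $\sC(k)$ in $G_N^{\rm NR}$ in breadth-first order, maintaining the set $\mathcal{E}$ of already processed vertices. When a vertex $i$ is processed, reveal the multiplicities $\xi_{\{i,j\}}\sim\operatorname{Pois}\!\big(w(i)w(j)/w([N])\big)$ for all $j\in[N]\setminus\mathcal{E}$ --- those towards $\mathcal{E}$ having been revealed when those vertices were processed, so that each pair is revealed exactly once --- and enqueue as newly discovered every $j$ with $\xi_{\{i,j\}}>0$ not seen before. Since only multiplicities incident to $\mathcal{E}$ have been revealed so far, conditionally on the exploration the family $(\xi_{\{i,j\}})_{j\notin\mathcal{E}}$ is still independent with the stated Poisson parameters. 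By Poisson splitting according to the other endpoint, this family has the same law as the occurrence counts obtained from a $\operatorname{Pois}\!\big(w(i)\big(1-\sum_{j\in\mathcal{E}}w(j)/w([N])\big)\big)$ number of i.i.d.\ marks distributed as $M$ conditioned on $\{M\notin\mathcal{E}\}$.

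I would then match this, step by step, to the branching side. Let $v$, with $M_v=i$, be the unthinned tree vertex corresponding to the currently processed vertex; it has $X_v\sim\operatorname{Pois}(w(i))$ children with i.i.d.\ marks $\sim M$. By Poisson splitting, the children with marks outside $\mathcal{E}^{(v)}:=\{M_u:u<v,\ u\text{ unthinned}\}$ form a $\operatorname{Pois}\!\big(w(i)\big(1-\sum_{j\in\mathcal{E}^{(v)}}w(j)/w([N])\big)\big)$ number of i.i.d.\ marks distributed as $M$ conditioned on $\{M\notin\mathcal{E}^{(v)}\}$, while the children with marks in $\mathcal{E}^{(v)}$ contribute no edge by the convention~\eqref{eq:edges} (the edge towards such a mark $M_u$, $u<v$, being recorded as $X_u(M_v)$ from the earlier endpoint). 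Since breadth-first order forces $\mathcal{E}^{(v)}$ to coincide with the processed set $\mathcal{E}$ at the corresponding step of the graph exploration, the two descriptions match, so the procedures can be coupled on one probability space: a fresh mark starts a new vertex, a repeated fresh mark or the mark $i$ itself produces a multi-edge or a self-loop, and in every case $\xi_{\{i,j\}}=X_v(j)$ for whichever endpoint $v$ of $\{i,j\}$ is explored first. Hence the explored edge-multiset of $\sC(k)$ agrees with $\mathcal{T}^k_{\rm thin}$, and the vertex sets agree because breadth-first search discovers exactly the vertices reachable from $k$.

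For the forest statement, run the exploration from vertex $1$ until $\sC(1)$ is exhausted, then restart from the smallest unexplored vertex, and so on. Each phase reveals only multiplicities incident to previously explored vertices, so among the remaining vertices the multiplicities are still independent $\operatorname{Pois}(w(i)w(j)/w([N]))$ with the \emph{same} normalisation $w([N])$; thus each phase is governed by a fresh tree $\mathcal{T}^m$ as above, with the extra thinning of a root whose mark already appeared accounting exactly for the phases that are skipped --- which is the definition of $(\mathcal{T}^1,\dots,\mathcal{T}^n)_{\rm thin}$. The main obstacle is the bookkeeping of the coupling of the thinning: one must verify that every edge of a component is produced with the correct multiplicity and counted exactly once, from the first-explored endpoint; that the ``wasted'' children whose marks lie in $\mathcal{E}^{(v)}$ correspond precisely to edges already revealed from the other side; and that the conditional Poisson parameters and mark-laws agree on the two sides at every step --- this is precisely where tracking the refined counts $X_v(j)$, rather than only the offspring numbers $X_v$ as in~\cite{norros2006conditionally}, is needed.
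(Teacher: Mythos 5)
Your proof is correct and takes essentially the same approach as the paper: the paper simply defers to Proposition~3.1 of \cite{norros2006conditionally}, noting that the only change is to keep track of edge multiplicities via the refined counts $X_v(j)$, which is exactly what your breadth-first Poisson-splitting coupling carries out in detail. The identification of the thinned-mark set $\mathcal{E}^{(v)}$ with the processed set $\mathcal{E}$, the convention that each edge (including self-loops and multi-edges) is counted once from its first-explored endpoint, and the phase-by-phase extension to the forest statement are precisely the ingredients the paper intends.
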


This proposition can be proved in the same way as Prop.\ 3.1 in \cite{norros2006conditionally}. The only difference is that we explicitly keep track of the number of edges.

\begin{figure}
\begin{center}
\includegraphics[width=13cm]{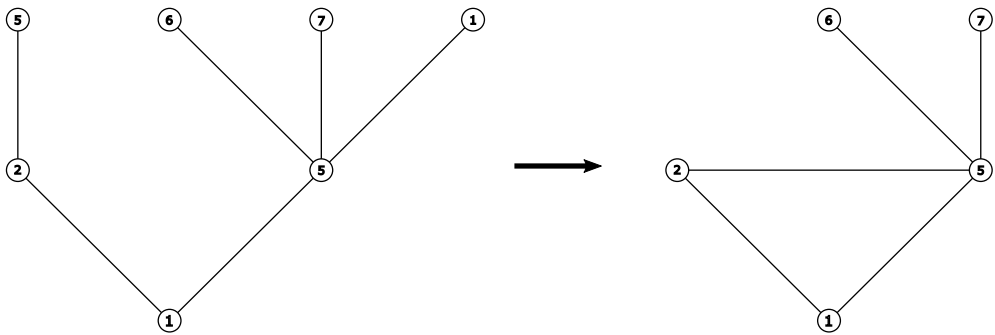}
\caption{On the left a realisation of the labelled Galton-Watson tree $\mathcal{T}^1$ and on the right the resulting  random graph. Note that the number of edges between $2$ and $5$ are determined by the number of children of type $5$ of the first child of the root.}\label{fig:thinning}
\end{center}
\end{figure}

\begin{remark}\label{rem:cycles} \emph{Thinning and creation of cycles.} 
Note that by construction,  we only create edges that lead to cycles  if there are tree vertices $v, v', w$ such that $v'$ is a child of $v$, but in the breadth-first order $v < w < v'$, $v$ and $w$ are unthinned  and such that $M_w =  M_{v'}$,
see Figure~\ref{fig:thinning} for an example.  The reason for this that the number of edges between $M_w$ and $M_v$ is determined by looking at the types of children of the first vertex in breadth-first order.
As a consequence, by this procedure we do not create edges between different components of $\mathcal{T}^i$.
Moreover, if an unthinned tree vertex $v$ has  children $v_1',\ldots,v_\ell'$ with $M_v = M_{v'_i}$, then this 
leads to $\ell$ self-loops. 
\end{remark}

\begin{remark}\label{rem:depletion}
Note that for the second construction, if the root of the $k$th tree $\mathcal{T}^k$ is not thinned, then any vertex in the tree with root $k$ that receives mark $j \leq k$ will be thinned. 
So to get a  stochastic upper bound on  the number of vertices and their degrees in the component $\sC(k)$, we can replace $\mathcal{T}^k$ by $\mathcal{T}^k_k$, where the marks are chosen independently with
distribution 
\[
M_k \stackrel{({\rm d})}{=} 
\begin{cases*}
M & if $M>k$, \\
\dagger & otherwise.
\end{cases*} \]
Then, the offspring distribution is ${\rm Pois}(W_{N,k}^*)$ with
$W_{N,k}^* \stackrel{d}{=} w(M_k)$, where we set $w(\dagger)=0$.
The error in this upper bound comes from thinning within $\mathcal{T}^k$ and also that thinned vertices are included as leaves of zero weight rather than simply being removed.
\end{remark}

\subsection{Proofs for the simple Norros-Reittu network}\label{subcrit_cpts_section}

In this section we will prove the statements made at the beginning of Section~\ref{sec:structure} using the coupling with a branching process as outlined in Section~\ref{Branching_section}.
A standard strategy will be to use that 
 the SNR model can  obtained from the MNR model by collapsing multi edges and then an  upper bound on the degrees in the  MNR model can be shown in terms of the Galton-Watson trees as described in Proposition~\ref{prop:tree_coupling}.

\begin{proof}[Proof of Proposition \ref{prop:diameter}]
By the construction of Proposition~\ref{prop:tree_coupling}, for an upper bound it suffices to 
bound the diameter in each component of $\mathcal{T}^1, \ldots, \mathcal{T}^n$ as extra edges are only created within components and so only make the diameters shorter, see also Remark~\ref{rem:cycles}.

Recall that in $\mathcal{T}^i$ the root has a ${\rm Pos}(w(i))$-distributed number of offspring, while the offspring distribution for any other vertex has the same distribution as 
$D \sim \operatorname{Pois}\left( W_N^* \right)$, where
the offspring mean satisfies
\begin{equation}\label{eq:2204-1}
\mathbb{E}\left( D \right) \rightarrow \frac{\beta}{1-2\gamma}<1.
\end{equation}
Let $(Z_k)_{k \geq 0}$ be a Galton-Watson tree with offspring distribution $D$ 
and $Z_0 = 1$. Then, for any 
$\rho \in \left(\frac{\beta}{1-2\gamma},1\right)$ and $N$ large enough we have, by Markov's inequality for all $k \geq 0$,
\[
\p(Z_k \neq 0) \leq  \mathbb{E}(Z_k)\leq \rho^k.
\]
By construction, it order to show the required bound on the diameter it suffices to bound the maximal depth of $Y$ independent Galton-Watson trees $\mathcal{T}_i^*, i =1,\ldots, Y$, with offspring distribution $D$ and where $Y$ is Poisson-distributed random variable with parameter
\begin{equation}\label{eq:total_weight}  w([N]) = \sum_{i,j \in [N]} \beta N^{2\gamma-1}i^{-\gamma}j^{-\gamma}
\sim \frac{\beta N}{(1-\gamma)^2},
\end{equation}
where here and in the following, we write $w(A) = \sum_{i \in [A]} w(i)$ for any $A \subset [N]$.

Since with high probability $Y \leq 2w([N]) \leq K N $ for a suitable constant $K \in \mathbb{N}$, we can get the required bound  by noting that for any $C >0$,
\[\begin{aligned}  \p \Big(\max_{i =1,\ldots, K N} \diam ( \mathcal{T}_i^*)  \geq C \log N \Big) 
& \leq \sum_{i=1}^{ K N} \p( \diam (\mathcal{T}_1^* ) \geq C \log N) \\ & \leq 
K N \p( Z_{\lfloor C \log N \rfloor } \neq 0 ) \leq K N \rho^{C \log N - 1} ,
\end{aligned}  \]
which converges to $0$ if we choose $C$ large enough such that $C \log \rho < 1$.
\end{proof}

In the following, we will develop a stochastic upper bound on the sizes of the trees  $\mathcal{T}^i$ in Proposition~\ref{prop:tree_coupling}
that no longer depends on $N$.
Throughout we will write $X \preceq Y$ if the random variable $Y$ stochastically dominates the random variable $X$.

\begin{lemma}\label{le:domination}
For any $\alpha > 1$, $\gamma<\frac{1}{2}$ and $N$ sufficiently large, we have
\[
 W^*_N \preceq \alpha W^* ,
\]
where $W^*$ is the weak limit of $(W_N^*)_N$ with density
\begin{equation}\label{eq:W_density}
\frac{\mathbb{P}(W^* \in {\rm d}x)}{{\rm d}x}= \mathbbm{1}_{x>\frac{\beta}{1-\gamma}} \frac{1-\gamma}{\gamma}\left( \frac{\beta}{1-\gamma}\right)^{\frac{1}{\gamma}-1} x^{-\frac{1}{\gamma}}.
\end{equation}
\end{lemma}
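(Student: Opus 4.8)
The plan is to prove stochastic domination by comparing distribution functions directly. Recall from \eqref{weight_defn} that $w(i) \sim \frac{\beta}{1-\gamma}(N/i)^\gamma$, and that the mark $M$ chooses vertex $m \in [N]$ with probability proportional to $m^{-\gamma}$, so $W_N^* \stackrel{d}{=} w(M)$. Since $w$ is decreasing in $i$, the event $\{w(M) \leq x\}$ corresponds to $\{M \geq m(x)\}$ for an appropriate threshold $m(x)$, and hence the tail of $W_N^*$ can be computed exactly as a ratio of partial sums of $j^{-\gamma}$ over $[N]$. The first step is therefore to write down $\mathbb{P}(W_N^* > x)$ explicitly in terms of these sums and to identify, using $\sum_{j=m}^N j^{-\gamma} \sim \frac{1}{1-\gamma}(N^{1-\gamma} - m^{1-\gamma})$ (valid since $\gamma < 1$), the limiting tail, which should match the Pareto-type density in \eqref{eq:W_density}: one checks that $W^*$ has tail $\mathbb{P}(W^* > x) = \big(\frac{\beta}{1-\gamma}\big)^{1/\gamma - 1} x^{1 - 1/\gamma}$ on the relevant range, after integrating \eqref{eq:W_density}.

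Next I would leverage the explicit forms to prove domination. To show $W_N^* \preceq \alpha W^*$ it suffices to show $\mathbb{P}(W_N^* > x) \leq \mathbb{P}(\alpha W^* > x) = \mathbb{P}(W^* > x/\alpha)$ for every $x$. For small $x$ (below $\frac{\beta}{1-\gamma}$, roughly), the right-hand side is $1$ and there is nothing to prove; the content is in the regime $x$ of order the lower endpoint of the support and above. There the strategy is to bound the finite-$N$ partial sums $\sum_{j=m}^N j^{-\gamma}$ above and below by their integral approximations $\int j^{-\gamma}\,dj$ with uniform error control, turning the ratio defining $\mathbb{P}(W_N^* > x)$ into the limiting expression times a factor $1 + o(1)$ uniformly in $x$. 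Since $\alpha > 1$ is fixed and the limiting tail $x \mapsto x^{1-1/\gamma}$ is continuous and strictly decreasing, the factor $\alpha$ in $\mathbb{P}(W^* > x/\alpha) = \alpha^{1/\gamma - 1}\mathbb{P}(W^* > x)$ gives a constant $\alpha^{1/\gamma-1} > 1$ of slack, which absorbs the $1+o(1)$ error once $N$ is large enough. One has to be slightly careful near the left endpoint of the support of $W^*$, where $\mathbb{P}(W^* > x/\alpha)$ jumps; but since dividing by $\alpha$ moves the endpoint strictly to the left, for $N$ large the (converging) support of $W_N^*$ lies to the right of $\frac{\beta}{\alpha(1-\gamma)}$, so the jump causes no problem.

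The main obstacle I anticipate is making the error estimates genuinely \emph{uniform in $x$}, rather than merely pointwise: the ratio of partial sums defining the tail of $W_N^*$ degenerates as $x$ approaches either end of the support (numerator or denominator tending to a small quantity), so the naive $\int$ versus $\sum$ comparison with additive $O(1)$ error is not immediately good enough near the upper end where $m(x)$ is small. The fix is to treat the regime $m(x) = O(1)$ separately — there $W_N^*$ is essentially bounded by $w(1) = \Theta(N^\gamma)$ while $W^*$ has unbounded support, so domination is trivial for $x$ that large — and to use the uniform multiplicative control only in the bulk range where $m(x) \to \infty$ and $m(x)/N \to 0$. Splitting into these three ranges (near lower endpoint, bulk, near upper endpoint) and handling each with the appropriate crude or refined estimate should complete the argument.
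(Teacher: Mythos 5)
Your overall strategy is the same as the paper's -- compare the tail of $W_N^*$ against the tail of $\alpha W^*$ and show the fixed slack $\alpha^{1/\gamma-1}>1$ absorbs the error for $N$ large -- but the execution differs in a way that creates exactly the difficulty you then spend the last paragraph worrying about. The paper avoids the uniformity problem entirely by never computing the tail of $W_N^*$ exactly: it uses the one-sided deterministic bound $w(i)\leq \frac{\beta}{1-\gamma}(N/i)^\gamma=:\lambda(i)$ together with the elementary comparison $\sum_{\ell=1}^{k}\ell^{-\gamma}\leq \frac{k^{1-\gamma}}{1-\gamma}$, giving
\[
\mathbb{P}(W_N^*\geq x)\;\leq\;\mathbb{P}\bigl(M\leq \lambda^{-1}(x)\bigr)\;\leq\;\frac{\bigl(\lambda^{-1}(x)\bigr)^{1-\gamma}}{(1-\gamma)\sum_{i=1}^N i^{-\gamma}}
\]
for every $x$, while $\mathbb{P}(\alpha W^*\geq x)=\alpha^{1/\gamma-1}\bigl(\lambda^{-1}(x)/N\bigr)^{1-\gamma}$ whenever the latter is below~$1$. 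The $x$-dependence cancels exactly in the ratio, so the infimum over $x$ is a single number $\alpha^{1/\gamma-1}(1-\gamma)\sum_i i^{-\gamma}/N^{1-\gamma}\to \alpha^{1/\gamma-1}>1$, with no case-splitting and no ``bulk versus endpoint'' analysis. This is the key simplification your proposal misses: one-sided bounds that preserve the exact power $(\lambda^{-1}(x))^{1-\gamma}$ make the comparison uniform by construction.

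On top of being more complicated, your proposal has a soft spot precisely where you say ``domination is trivial for $x$ that large.'' It is only literally trivial for $x>w(1)$, where $\mathbb{P}(W_N^*>x)=0$. For $x$ just below $w(1)$ (i.e.\ $m(x)=O(1)$ but $m(x)\geq 1$), $\mathbb{P}(W_N^*>x)$ is a small positive number of order $N^{\gamma-1}$, and $\mathbb{P}(\alpha W^*>x)$ is of the same polynomial order, so these two small quantities genuinely have to be compared. Your sketch does not supply that comparison, whereas the paper's bound on $\mathbb{P}(M\leq k)$ holds uniformly down to $k=1$ and handles this regime automatically. So either adopt the one-sided-bound route, or if you insist on exact partial sums you must actually carry out the comparison in the $m(x)=O(1)$ window rather than dismissing it.
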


\begin{proof}
Note that the  MNR weights satisfy for each $i \in [N]$,
\begin{equation}\label{eq:up_bound_w}
\begin{split}
w(i)&= \beta N^{2\gamma-1} i^{-\gamma} \sum_{j = 1}^N j^{-\gamma} 
\leq\frac{\beta}{1-\gamma}\left( \frac{N}{i} \right)^\gamma =: \lambda(i) .
\end{split}
\end{equation}
Moreover, by the definition of the distribution of the marks, we have that
\[
\mathbb{P}(M \leq  k) = \sum_{\ell =1}^k \frac{\ell^{-\gamma}}{\sum_{i=1}^N i^{-\gamma}} \leq \frac{1}{1-\gamma} \frac{k^{1-\gamma}}{\sum_{i=1}^N i^{-\gamma}}. 
\]

Now we can consider  the tail of  distribution function of $W_N^* 
\stackrel{{\rm d}}{=} w(M)$ and
estimate for any $x \geq 0$,
\[
\mathbb{P}\left( W_N^* \geq x  \right)
=
\mathbb{P}\left( w(M)  \leq x  \right)
\leq \mathbb{P} (\lambda(M) \leq x )  =\mathbb{P}\left( M \leq \lambda^{-1}(x)  \right)
\leq
\frac{\left( \lambda^{-1}(x)  \right)^{1-\gamma}}{(1-\gamma)\sum_{i=1}^N i^{-\gamma}},
\]
where we write $\lambda(x) := \frac{\beta}{1-\gamma} N^\gamma x^{-\gamma}$.
Furthermore, we can compare this expression to  
\[
\mathbb{P}(\alpha W^* \geq x)
=\left( \frac{1-\gamma}{\beta}\frac{x}{\alpha}\right)^{1-1/\gamma}
=\alpha^{1/\gamma-1}\left(\frac{\lambda^{-1}(x)}{N} \right)^{1-\gamma}
\]
Therefore, we can conclude that
\[
\inf_{x\geq 0} \frac{\mathbb{P}(\alpha W^* \geq x)}{\mathbb{P}\left( W_N^* \geq x  \right)}
\geq \alpha^{1/\gamma-1} \frac{(1-\gamma)\sum_{i=1}^N i^{-\gamma}}{N^{1-\gamma}}
\rightarrow \alpha^{1/\gamma-1}>1,
\]
which gives the claimed stochastic domination.
\end{proof}

\begin{proposition}\label{le:poisson_powerlaw}
If $\alpha \in (1, \frac{1-2\gamma}{\beta})$,
then $D_\alpha \sim \operatorname{Pois}\left( \alpha W^* \right)$ satisfies
\[
p_k:=\mathbb{P}(D_\alpha=k)=\Theta(k^{-1/\gamma}).
\]
In particular, if $T$ is a Galton-Watson tree with offspring distribution $(p_k)_{k\geq 0}$, then the total size $|T|$ satisfies
\[ \mathbb{P}(|T|=k)=\Theta\big(k^{-1/\gamma} \big).\] 
\end{proposition}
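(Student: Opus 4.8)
The plan is to prove the two claims separately but with the second following easily from the first. For the tail of $p_k = \mathbb{P}(D_\alpha = k)$, I would write $D_\alpha$ as a mixed Poisson, so that
\[
p_k = \int_{\beta/(1-\gamma)}^\infty e^{-\alpha x} \frac{(\alpha x)^k}{k!} \, f_{W^*}(x)\, {\rm d}x,
\]
where $f_{W^*}$ is the density in~\eqref{eq:W_density}, which behaves like $c\, x^{-1/\gamma}$ on its support. Substituting $y = \alpha x$ turns this into (a constant times) $\alpha^{1/\gamma-1}\int_{\alpha\beta/(1-\gamma)}^\infty e^{-y} \frac{y^k}{k!} y^{-1/\gamma}\, {\rm d}y$. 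The key observation is that $\frac{y^k}{k!}e^{-y}$, as a function of $y$, is the density of a $\operatorname{Gamma}(k+1,1)$ random variable, which concentrates around $y = k$ with fluctuations of order $\sqrt{k}$. Therefore $\int_0^\infty e^{-y}\frac{y^k}{k!} y^{-1/\gamma}\, {\rm d}y = \frac{\Gamma(k+1-1/\gamma)}{\Gamma(k+1)} = \Theta(k^{-1/\gamma})$ by standard asymptotics of the Gamma function (Stirling). The only subtlety is that the integral is truncated to $y \geq \alpha\beta/(1-\gamma)$ rather than $y \geq 0$; but for $k$ large the missing piece $\int_0^{\alpha\beta/(1-\gamma)}$ is exponentially small in $k$ and hence negligible compared to the polynomial main term. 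This gives $p_k = \Theta(k^{-1/\gamma})$, and in particular $\sum_k k\, p_k < \infty$ with mean strictly less than $1$ since $\alpha < (1-2\gamma)/\beta$ ensures $\mathbb{E}(D_\alpha) = \alpha \mathbb{E}(W^*) = \frac{\alpha\beta}{1-2\gamma} < 1$; the offspring law is genuinely subcritical.

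For the second claim, I would invoke the classical formula for the total progeny of a Galton-Watson tree via the hitting time / cycle lemma (Dwass / Otter): if $T$ has offspring distribution $(p_k)$ and $S_n = \xi_1 + \dots + \xi_n$ is a random walk with i.i.d.\ increments distributed as the offspring law, then
\[
\mathbb{P}(|T| = k) = \frac{1}{k}\, \mathbb{P}(S_k = k-1).
\]
So everything reduces to a local central limit estimate for $\mathbb{P}(S_k = k-1)$. Since the offspring distribution has a power-law tail with exponent $1/\gamma > 2$ (as $\gamma < 1/2$), it has finite variance $\sigma^2 \in (0,\infty)$, is aperiodic (the support contains $0$ and $1$), and has mean $m := \mathbb{E}(D_\alpha) < 1$. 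By the local CLT for lattice random walks, $\mathbb{P}(S_k = j) \sim \frac{1}{\sqrt{2\pi k \sigma^2}} e^{-(j-mk)^2/(2k\sigma^2)}$ uniformly over $j$; but here $j = k-1$ corresponds to a deviation $(1-m)k$ from the mean $mk$, which is of order $k$, i.e.\ a moderate/large deviation. So instead I would use the standard large-deviation (Cramér / Bahadur–Rao) sharp asymptotics for lattice walks: $\mathbb{P}(S_k = k-1) \sim \frac{C}{\sqrt{k}} e^{-k I(1)}$ where $I$ is the rate function of the offspring law and $I(1) > 0$. Plugging into Dwass's formula would give $\mathbb{P}(|T|=k) = \Theta(k^{-3/2} e^{-kI(1)})$, which is exponentially small — this is \emph{not} $\Theta(k^{-1/\gamma})$, so this naive route cannot be what is intended.

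**The real mechanism and the main obstacle.** The point I was missing is that the claimed polynomial decay of $\mathbb{P}(|T| = k)$ comes not from typical behaviour of the walk but from the heavy tail of a single offspring: the dominant way for the total progeny to be large is for \emph{one} individual to have $\approx k$ children, an event of probability $\Theta(k^{-1/\gamma})$, after which the rest of the tree is subcritical and small. This is the ``one big jump'' / subexponential principle for the total progeny of heavy-tailed Galton-Watson trees (as in work of Borovkov, or the Dwass-formula-plus-subexponential-LLT analysis). So the correct argument is: (i) a lower bound, by conditioning on the root having $j \in [k/2, k]$ children and the resulting $j$ subtrees having total size $k-1-j$ — since $\mathbb{E}|T| < \infty$, the probability that $j$ independent subtrees have total size in a window around a constant is $\Theta(1/j)\cdot\Theta(1)$ wait, more carefully $\Theta(1)$ — summing $\sum_{j} p_j \cdot \mathbb{P}(\text{$j$ subtrees have size } k-1-j)$ and extracting the term where $k-1-j$ is $O(1)$, giving $\gtrsim p_k \asymp k^{-1/\gamma}$; and (ii) an upper bound via the subexponentiality of the offspring law: using Dwass $\mathbb{P}(|T|=k) = \frac1k \mathbb{P}(S_k = k-1)$, and the fact that for subexponential increments with mean $m<1$, $\mathbb{P}(S_k = k-1) = \mathbb{P}(S_k - mk = (1-m)k - 1) \sim k\, \mathbb{P}(\xi_1 = \lceil (1-m)k\rceil) \asymp k \cdot k^{-1/\gamma}$ by the single-big-jump local limit theorem, whence $\mathbb{P}(|T|=k) \asymp k^{-1/\gamma}$. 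The main obstacle is therefore making the single-big-jump heuristic rigorous at the level of \emph{local} (not just tail) probabilities for the stopped walk; I would either cite a local subexponential limit theorem directly or, more self-containedly, prove the two-sided bound by the conditioning argument above for the lower bound and a union bound over ``which individual had many children'' for the upper bound, controlling the remaining subcritical mass via $\mathbb{E}|T|<\infty$ and a second-moment or Markov estimate. Matching the constants exactly is not needed since only $\Theta$ is claimed.
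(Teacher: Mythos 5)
Your treatment of the first claim is essentially the same as the paper's: write $p_k$ as a mixed Poisson, substitute to reduce to an incomplete Gamma integral, observe the truncated piece is exponentially small, and apply Stirling to the ratio $\Gamma(k-1/\gamma)/\Gamma(k)$. This matches.

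For the second claim you take a genuinely different route. The paper disposes of it in one line by citing Jonsson and Stef\'ansson (\cite{jonsson2011condensation}, Theorem~4.1), which is exactly the ``total progeny of a heavy-tailed subcritical Galton--Watson tree inherits the offspring tail'' result. You instead try to reconstruct that result. Your diagnostic reasoning is good: you correctly discard the naive local CLT (which would give an exponential decay, contradicting the claim) and identify the single-big-jump mechanism. Your Dwass-plus-subexponential-local-LLT route, $\mathbb{P}(|T|=k)=\tfrac1k\mathbb{P}(S_k=k-1)$ together with $\mathbb{P}(S_k=k-1)\sim k\,\mathbb{P}(\xi_1=\lceil(1-m)k\rceil)$, does give $\Theta(k^{-1/\gamma})$ and is morally the proof of the cited theorem; the cost is you need a local (not integrated) large-deviation result for regularly varying lattice walks, which you would have to import anyway (e.g.\ Doney, or Denisov--Dieker--Shneer), so in the end this is a different citation rather than a more elementary argument.

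One concrete gap in your ``self-contained'' fallback. The lower bound you sketch by conditioning only on the \emph{root} degree does not produce $\Theta(k^{-1/\gamma})$. If the root has $j\approx(1-m)k$ children (probability $\asymp k^{-1/\gamma}$), the $j$ subtree sizes have mean $1/(1-m)$, so their sum concentrates around $k$, and a local CLT pinning the sum at the exact integer $k-1-j$ contributes an extra factor $\asymp k^{-1/2}$; multiplying gives only $k^{-1/\gamma-1/2}$. The missing factor of $k^{1/2}$ is really a factor of $k$ in $\mathbb{P}(S_k=k-1)$ divided by the $1/k$ prefactor of Dwass: the big jump can sit at \emph{any} of the $\asymp k$ vertices of the tree, not just the root, and Dwass's formula aggregates this automatically. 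Your ``union bound over which individual had many children'' remark gestures at this, but the version you wrote conditions on the root only and would underestimate. Also note in passing that your ``extracting the term where $k-1-j$ is $O(1)$'' can never happen: with $j$ children the tree has size at least $j+1$, and forcing almost all children to be leaves costs an exponentially small factor $p_0^{j}$. So keep the Dwass-plus-local-LLT route and drop the conditioning sketch, or replace it with the correct ``pick the big-jump vertex anywhere in the tree'' version.
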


\begin{proof}
The second statement follows from the first one by \cite[Thm.\ 4.1]{jonsson2011condensation} (which holds in general for trees with power law offspring distribution with an power law exponent greater than~$3$).

To prove the first statement, we use that
 by~\eqref{eq:W_density} we know that $\alpha W^*$ has density $\operatorname{f}$ where
$
\operatorname{f}(x)= C x^{-\frac{1}{\gamma}} \mathbbm{1}_{x>\frac{\alpha \beta}{1-\gamma}},
$
for some $C>0$ that makes this a probability measure.
Therefore,
\[
p_{k-1} = \frac{C}{(k-1)!}\int_{\alpha \beta/(1-\gamma)}^\infty x^{k-1-1/\gamma} e^{-x} {\rm d}x = C \frac{\Gamma(k-1/\gamma)-E_k}{\Gamma(k)}, 
\]
where the error term $E_k$ is defined as 
\[
E_k= \int_0^{\alpha \beta/(1-\gamma)} x^{k-1-1/\gamma} e^{-x} {\rm d}x
\leq \left( \frac{\alpha \beta}{1-\gamma} \right)^{k-1/\gamma}\leq 1, 
\]
and where the bound holds for $k \geq \frac{1}{\gamma} + 1$, using the assumption that $\alpha$ is small. 
We deduce that 
 $E_k=\Theta(1)$. Now we use $\Gamma (k)= \Theta \left( k^{k-{{1}/{2}}}e^{-k} \right)$ to rearrange
\[
p_{k-1}= \Theta \left( \frac{(k-1/\gamma)^{k-1/\gamma-{{1}/{2}}}e^{1/\gamma-k}}{k^{k-{{1}/{2}}}e^{-k}} \right)
= \Theta \left( \left(k-\frac{1}{\gamma}\right)^{-1/\gamma} \right)
\]
by using the classical limit
$
\left(1-\frac{1}{k\gamma}\right)^{k-1/2}  \rightarrow e^{-1/\gamma}
$.
\end{proof}

The statement of \emph{Proposition \ref{prop:big_sum_of_degrees}} follows immediately from the following  lemma, but we will also need the upper bound on the unthinned Galton-Watson forest.

\begin{lemma} \label{le:unthinned_sum_of_degrees}
For the Galton-Watson forest $(\mathcal{T}^1,\ldots, \mathcal{T}^N)$ as defined in the paragraph before Proposition~\ref{prop:tree_coupling}, we have that with high probability
\[
\max_{k \leq K_\gamma}
\frac{\sum_{z \in \mathcal{T}^k} \de(z)}{(N/k)^\gamma} \leq \log N.
\]
\end{lemma}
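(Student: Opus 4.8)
The plan is to exploit the coupling of Section~\ref{Branching_section}: in the Galton-Watson forest $(\mathcal{T}^1,\ldots,\mathcal{T}^N)$, the tree $\mathcal{T}^k$ has a root with $\operatorname{Pois}(w(k))$ offspring and all other vertices have $\operatorname{Pois}(W_N^*)$-distributed offspring, so that $\sum_{z \in \mathcal{T}^k}\de(z) = 2|\mathcal{T}^k|-1$ since $\mathcal{T}^k$ is a tree. Thus it suffices to show that with high probability $\max_{k\le K_\gamma}|\mathcal{T}^k| \le \tfrac12 (N/k)^\gamma \log N$, and since for $k\le K_\gamma$ we have $(N/k)^\gamma \ge (N/K_\gamma)^\gamma = N^{\gamma/(2-2\gamma)}(\log N)^{-\gamma}$, which is polynomial in $N$, it is enough to get a bound that absorbs polylog factors. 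The first step is to dominate the size $|\mathcal{T}^k|$: conditionally on the root having $X_\emptyset = m$ children, $|\mathcal{T}^k|$ is stochastically below $1 + \sum_{i=1}^m |T_i|$ where the $T_i$ are i.i.d.\ copies of a Galton-Watson tree $T$ with offspring law $\operatorname{Pois}(W_N^*)$, and by Lemma~\ref{le:domination} we may further dominate each $T_i$ by a Galton-Watson tree with offspring law $\operatorname{Pois}(\alpha W^*)$ for a fixed $\alpha \in (1,\tfrac{1-2\gamma}{\beta})$; call such a tree $T^{(\alpha)}$, noting the mean offspring is $\alpha\mathbb{E}(W^*)<1$ so it is subcritical. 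By Proposition~\ref{le:poisson_powerlaw}, $\mathbb{P}(|T^{(\alpha)}| = k) = \Theta(k^{-1/\gamma})$, hence $\mathbb{P}(|T^{(\alpha)}| \ge k) = \Theta(k^{1-1/\gamma})$.

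The second step is a union bound over $k \le K_\gamma$ combined with a tail estimate for $|\mathcal{T}^k|$. We have $w(k) \le \lambda(k) = \tfrac{\beta}{1-\gamma}(N/k)^\gamma$ by~\eqref{eq:up_bound_w}, so the root offspring count $X_\emptyset$ is, with high probability (uniformly in $k\le K_\gamma$, again by a union bound using Poisson concentration), at most $C(N/k)^\gamma$ for a constant $C$. On that event $|\mathcal{T}^k| \preceq 1 + \sum_{i=1}^{\lceil C(N/k)^\gamma\rceil}|T_i^{(\alpha)}|$. Now I would estimate the probability that this sum exceeds $t := \tfrac12(N/k)^\gamma\log N$. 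Writing $m = \lceil C(N/k)^\gamma\rceil$, one route is a truncation/Chernoff argument: since each $|T_i^{(\alpha)}|$ has a power-law tail with exponent $1/\gamma - 1 \in (1,\infty)$ (because $\gamma<1/2$ gives $1/\gamma-1>1$), the sum of $m$ such variables has mean $\Theta(m)$ and the probability it exceeds $t = \Theta(m\log N)$ decays faster than any polynomial in $\log N$ — but one must be careful because the individual tails are only polynomially light. The cleanest approach is: with probability $1 - O(m \cdot t^{1-1/\gamma}) = 1 - O(m (N/k)^{\gamma(1-1/\gamma)}(\log N)^{1-1/\gamma})$ none of the $m$ summands individually exceeds $(N/k)^{\gamma}$; on that event each summand is bounded and has mean $O(1)$, so Bernstein/Bennett for the truncated sum gives that $\sum_i |T_i^{(\alpha)}| > t$ with probability at most $\exp(-c\log N (N/k)^{\gamma})$ or so, which is super-polynomially small. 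Then sum the failure probabilities over $k \le K_\gamma \le N$.

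The main obstacle I anticipate is getting the union bound over all $k \le K_\gamma$ to close with the right power of $\log N$: one must track carefully that (i) the event "$X_\emptyset \le C(N/k)^\gamma$ for all $k$" holds whp, which needs a Poisson deviation bound summed over $k$, costing at most a polylog; (ii) the per-$k$ deviation probability for $\sum_i|T_i^{(\alpha)}|$ beats $1/N$ with room to spare, which requires the threshold $\tfrac12(N/k)^\gamma\log N$ to dominate the mean $\Theta((N/k)^\gamma)$ by a genuine $\log N$ factor — this is exactly why the statement has the $\log N$ rather than being sharp, and matches the remark after Proposition~\ref{prop:small_sum_of_degrees} that the bound is not optimal. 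A subtlety is that the polynomial tail of $|T^{(\alpha)}|$ is too heavy for a naive single-variable union bound to give anything below a polynomial in $N$; the truncation step is what converts the heavy-tailed sum into a light-tailed one and is the technical heart of the argument. I would also double-check that the "include thinned vertices as zero-weight leaves" convention of Remark~\ref{rem:depletion} only inflates $|\mathcal{T}^k|$, so the stochastic upper bound is legitimate, and that for $k \le K_\gamma$ the quantity $(N/k)^\gamma$ is indeed at least a fixed positive power of $N$ so that "super-polynomial in $\log N$" really is "super-polynomial in $N$" after raising to that power in the exponent.
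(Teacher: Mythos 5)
Your setup mirrors the paper's: reduce $\sum_z\de(z)$ to $2|\mathcal{T}^k|-2$, control the root degree by $C(N/k)^\gamma$ on a high-probability event, dominate each subtree by an i.i.d.\ $\operatorname{Pois}(\alpha W^*)$-Galton--Watson tree via Lemma~\ref{le:domination}, and use $\mathbb{P}(|T|>k)=\Theta(k^{1-1/\gamma})$ from Proposition~\ref{le:poisson_powerlaw}. After that, the routes diverge, and your version has a genuine gap.

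The paper invokes the uniform large-deviation (``big-jump'') estimate for sums of subexponential random variables directly at the target level:
\[
\mathbb{P}\Big(\sum_{i=1}^{d}|T_i|>x\Big)\leq (1+\epsilon)\,d\,\mathbb{P}\big(|T_1|>x-d\,\mathbb{E}|T_1|\big),
\qquad x\geq (\gamma+\mathbb{E}|T_1|)d ,
\]
applied with $d\asymp a:=(N/k)^\gamma$ and $x=a\log N$. The crucial point is that the $\log N$ sits \emph{inside} the single-variable tail, contributing a factor $(\log N)^{1-1/\gamma}$ to each term in the union bound over $k$. Without that factor, $N^\gamma\sum_{k\le K_\gamma}k^{-\gamma}((N/k)^\gamma)^{(\gamma-1)/\gamma}$ evaluates to $\Theta((\log N)^{2-2\gamma})$, which diverges for every $\gamma\in(0,1/2)$; with it, the sum is $\Theta((\log N)^{3-2\gamma-1/\gamma})=o(1)$. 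The entire lemma hinges on this $(\log N)^{1-1/\gamma}$.

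Your truncation-plus-Bernstein/Bennett replacement loses exactly this factor. You truncate at $L_T=(N/k)^\gamma$, so the per-tree truncation failure probability is $\Theta(m\,L_T^{1-1/\gamma})=\Theta((N/k)^{2\gamma-1})$ with \emph{no} $\log N$ inside the tail, and summing over $k\le K_\gamma$ gives $\Theta((\log N)^{2-2\gamma})$ --- divergent. (Your written formula $O(m\cdot t^{1-1/\gamma})$, with $t=\tfrac12(N/k)^\gamma\log N$, is internally inconsistent with the stated truncation level $(N/k)^\gamma$; the tail at level $(N/k)^\gamma$ is $\Theta(((N/k)^\gamma)^{1-1/\gamma})$ and does not acquire the $(\log N)^{1-1/\gamma}$ factor.) Pushing the truncation all the way up to $t$ repairs the truncation union bound, but then $L_T\approx s$ and Bennett degenerates to $\exp(-\Theta(\log(N/k)))$, whose union bound over $k$ again diverges. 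The route can be saved with an \emph{intermediate} truncation $L_T=(N/k)^\gamma(\log N)^\alpha$ for any fixed $\alpha\in(2\gamma,1)$ (nonempty since $2\gamma<1$): the truncation failure sum becomes $O((\log N)^{2-2\gamma+\alpha(1-1/\gamma)})=o(1)$ while Bennett (not Bernstein --- you need the Poissonian tail) gives $\exp(-\Theta((\log N)^{2-\alpha}))$ per $k$, super-polynomial. This is the step you flagged as the ``technical heart'' but did not resolve; the subexponential big-jump lemma is precisely the packaged form of this optimization, and that is what the paper uses. Two smaller issues: your claimed Bennett rate $\exp(-c\log N\,(N/k)^\gamma)$ is not what the inequality yields (with $L_T=(N/k)^\gamma$ the exponent is $\Theta(\log^2 N)$, already super-polynomial, so the Bennett side is not where the problem lies); and your worry about the zero-weight-leaf convention of Remark~\ref{rem:depletion} is vacuous here, since this lemma is about the \emph{unthinned} forest --- thinning only enters in the corollary Proposition~\ref{prop:big_sum_of_degrees}.
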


\begin{proof}[Proof of Lemma \ref{le:unthinned_sum_of_degrees}]
First, we consider the degrees of the roots, where we write $\emptyset_k$ for the root of
the tree $\mathcal{T}^k$. Then, since $\de(\emptyset_k) \sim 
{\rm Pois}(w(k))$, standard large deviation estimates for Poisson distributions
give a $C_1>0$ such that the event 
\[
E_1 := \Big\{ \max_{k \leq K_\gamma}
\frac{\de(\emptyset_k)}{(N/k)^\gamma}\leq C_1 \Big\}, 
\]
satisfies $\lim_{N \rightarrow \infty} \p(E_1) = 1$.	
Then, in distribution, each $\mathcal{T}^k$ consists of the root $\emptyset_k$ with 
$\de(\emptyset_K)$ edges to which we attach independent Galton-Watson trees, where the number of offspring has the same distribution as ${\rm Pois}(W_N^*)$. 
In particular, by Lemma~\ref{le:domination}, we can dominate the size of $\mathcal{T}^k$ by the size of $\mathcal{T}^{k,\alpha}$, a tree where the root $\emptyset_k$ has $\de(\emptyset_k)$ children, which are each connected to an independent Galton-Watson tree with offspring distribution $D_\alpha \sim\operatorname{Pois} ( \alpha W^*)$, for some $\alpha \in (1,\frac{1-2\gamma}{\beta}).
$ 

Now, if $T_1, \ldots, T_n$ denote independent copies of $D_\alpha$-Galton-Watson trees, then by Proposition~\ref{le:poisson_powerlaw} the total sizes $|T_i|$ of these trees satisfy $\p(|T_i| =k) = \Theta(k^{-\frac{1}{\gamma}})$, 
so that they are subexponential, see~\cite{embrechts2013modelling}, 
in the sense that
\[
\lim_{n \rightarrow \infty} \sup_{x \geq \gamma n}\left|1- \frac{\mathbb{P}\left(\sum_{k=1}^n \left(|T_k| -\mathbb{E}(|T_k|) \right)>x\right)}{n\mathbb{P}\left(|T_1|>x\right)}\right|=0.
\]
In particular, we have that for  any $\epsilon > 0$, for all $d$ sufficiently large and for $x \geq (\gamma  + \E(|T_1|))d$
\begin{equation}\label{eq:subexp}
\mathbb{P}\Big(\sum_{k=1}^d |T_k| >x  \Big) 
\leq(1+\epsilon)d \, \mathbb{P}(|T_1|>x- d \E(|T_1|) ).
\end{equation}
Since $\E(D_\alpha) =  \frac{\alpha\beta}{1-2\gamma}$, we have that
\[ \E(|T_1|) = \sum_{k=0}^\infty \Big(   \frac{\alpha\beta}{1-2\gamma}\Big)^k  = \frac{1}{1- \frac{\alpha \beta}{1- 2 \gamma}}. \]
Therefore, if we define $C_2 = 1 + C_1 \frac{\alpha\beta}{1 -2 \gamma}$, we have
for any $k \leq K_\gamma$,
\[
\begin{split}
\mathbb{P}\Big( |\mathcal{T}^k|>\Big( & \frac{N}{k} \Big)^\gamma \log N ; E_1 \Big) 
\leq \mathbb{P}\Big(|\mathcal{T}^{k,\alpha}|>\Big( \frac{N}{k} \Big)^\gamma \log N  ; E_1\Big) \\
& \leq \mathbb{P}\left( 1+ \sum_{i=1}^{\lfloor C_1 (\frac{N}{k})^\gamma\rfloor } |T_i| >\left( \frac{N}{k} \right)^\gamma \log N  \right) \\
&\leq (1+\epsilon) C_1 \left( \frac{N}{k} \right)^\gamma \mathbb{P}\left(|T_1|>\left( \frac{N}{k} \right)^\gamma \log N - C_2\left( \frac{N}{k} \right)^\gamma\right)\\
&\leq (1+2\epsilon) C_1 \left( \frac{N}{k} \right)^\gamma \mathbb{P}\left(
|T_1|>\left( \frac{N}{k} \right)^\gamma \log N 
\right),
	\end{split}
\]
where the last inequality holds for $N$ sufficiently large because we know $|T_1|$ has a power law tail. Hence, by a union bound
\[\begin{aligned} 
\mathbb{P}\bigg( \exists k \leq K_\gamma : & |\mathcal{T}^k|> \left( \frac{N}{k} \right)^\gamma \log N \bigg)\\
& \qquad \leq
(1+2\epsilon) C_1 N^\gamma \sum_{k \leq K_\gamma} k^{-\gamma}\, \mathbb{P}\left(
|T_1|>\left( \frac{N}{k} \right)^\gamma \log N\right), 
\end{aligned} \]
which for $N$ large enough we can bound for some suitable constant $C_3>0$ by
\[
\begin{split}
&
(1+2\epsilon) C_3 N^\gamma \sum_{k \leq K_\gamma} k^{-\gamma} 
\left(\left( \frac{N}{k} \right)^\gamma \log N\right)^\frac{\gamma-1}{\gamma}\\
&=
(1+2\epsilon)  C_3 N^{2\gamma-1} \log^{\frac{\gamma-1}{\gamma}}  N  \sum_{k \leq K_\gamma} k^{1-2\gamma}\\
&\leq 
\frac{(1+ 2\epsilon)  C_3}{2-2\gamma} \log^{3-\frac{1}{\gamma}-2\gamma} N = o(1),
\end{split}
\]
where we used $K_\gamma = N^{\frac{1-2\gamma}{2-2\gamma}} \log N$.
From this bound on the number of vertices, we can immediately deduce 
the claimed bound for the sum of the degrees, since
\[
\sum_{v \in \mathcal{T}^k}\de(v)=2\big|\mathcal{T}^k\big|-2 , 
\]
as each $\mathcal{T}^k$ is a tree.
\end{proof}

\begin{proposition}\label{prop:small_degrees_arent_big}
On $G_N \in \cG_{\beta,\gamma}$ with $\beta+2\gamma<1$ we have the following uniform bound on the degrees of vertices with larger index
\[
\max_{k > K_\gamma} \de(k) = O_{\mathbb{P}} \left( N^\frac{\gamma}{2-2\gamma} \right).
\]
\end{proposition}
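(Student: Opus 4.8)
The plan is to prove the statement for the SNR representative of $\cG_{\beta,\gamma}$, which is legitimate by Remark~\ref{rem:IRG}(c), and to bound the degree of each vertex $k > K_\gamma$ by a Poisson variable of mean $o(N^{\gamma/(2-2\gamma)})$; a union bound over the at most $N$ such vertices then finishes the proof.

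First I would record the stochastic domination $\de(k) \preceq \operatorname{Pois}(\lambda(k))$, valid for every $k \in [N]$, where $\lambda(k) = \frac{\beta}{1-\gamma}(N/k)^\gamma$ is the bound on $w(k)$ from~\eqref{eq:up_bound_w}. Indeed, in the SNR model $\de(k) = \sum_{j \neq k}\mathbbm{1}_{\{k,j\}\in E(G_N)}$ is a sum of independent Bernoulli variables of parameters $q_{k,j} = 1-e^{-p_{k,j}}$, and since $-\log(1-q_{k,j}) = p_{k,j}$, each $\operatorname{Bernoulli}(q_{k,j})$ is stochastically dominated by $\operatorname{Pois}(p_{k,j})$; summing over $j$ and using $\sum_{j \neq k} p_{k,j} \le w(k) \le \lambda(k)$ gives the claim. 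Equivalently one may invoke the MNR coupling of Proposition~\ref{prop:tree_coupling}, since the simple degree of $k$ is at most the number of edges incident to $k$ in the multigraph, which is $\operatorname{Pois}(w(k))$-distributed.

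Next I would evaluate $\lambda$ at the threshold. Since $1-\tfrac{1-2\gamma}{2-2\gamma} = \tfrac{1}{2-2\gamma}$, we have $N/K_\gamma = N^{1/(2-2\gamma)}/\log N$, hence $\lambda(k) \le \lambda(K_\gamma) = \frac{\beta}{1-\gamma}\,N^{\gamma/(2-2\gamma)}(\log N)^{-\gamma} =: \mu_N$ for all $k \ge K_\gamma$, and crucially $\mu_N = o(N^{\gamma/(2-2\gamma)})$. Fix $C > 0$ and set $t_N := C\,N^{\gamma/(2-2\gamma)}$, so that $t_N/\mu_N \to \infty$; the Chernoff bound $\mathbb{P}(\operatorname{Pois}(\mu)\ge t) \le \exp\!\big(-t(\log(t/\mu)-1)\big)$ then gives $\mathbb{P}(\operatorname{Pois}(\lambda(k)) \ge t_N) \le e^{-t_N}$ for all $k > K_\gamma$ once $N$ is large. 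A union bound yields
\[
\mathbb{P}\Big(\max_{k > K_\gamma}\de(k) \ge C\,N^{\gamma/(2-2\gamma)}\Big) \;\le\; N\,e^{-C\,N^{\gamma/(2-2\gamma)}} \longrightarrow 0,
\]
since $N^{\gamma/(2-2\gamma)} \gg \log N$, which is precisely $\max_{k>K_\gamma}\de(k) = O_{\mathbb{P}}(N^{\gamma/(2-2\gamma)})$. There is no genuine obstacle here: the only points requiring care are the exponent arithmetic above and the observation that the $\log N$ factor in $K_\gamma$ leaves a $(\log N)^{-\gamma}$ gap, which is exactly what makes the deviation large-deviation scale and renders the union bound over $N$ vertices trivially convergent.
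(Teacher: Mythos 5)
Your proposal is correct and follows essentially the same route as the paper: dominate $\de(k)$ for the SNR/MNR representative by a $\operatorname{Pois}(w(k))$ variable, observe that $w(k)\le w(K_\gamma)=\Theta(N^{\gamma/(2-2\gamma)}(\log N)^{-\gamma})$ falls short of the target by a polylogarithmic factor, apply an exponential tail bound, and union bound over the at most $N$ indices. The only cosmetic difference is that you use the optimized Poisson Chernoff bound $\mathbb{P}(\operatorname{Pois}(\mu)\ge t)\le \exp(-t(\log(t/\mu)-1))$ whereas the paper uses the cruder exponential Markov inequality $\mathbb{P}(\operatorname{Pois}(\mu)\ge t)\le e^{\mu(e-1)-t}$; both succeed because the $(\log N)^{-\gamma}$ gap already makes the exponent diverge super-logarithmically.
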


\begin{proof}
As before we can use that the degrees are stochastically dominated by the degrees for the MNR model, where each $\de(k) \sim {\rm Pois}(w(k))$.
By  Chebyshev's inequality, we have that
\[
\begin{split}
\log \mathbb{P}\left(d(k) \geq N^\frac{\gamma}{2-2\gamma} \right) &\leq \log \mathbb{P}\left(\operatorname{Pois}( w(k) ) \geq N^\frac{\gamma}{2-2\gamma} \right)\\
&\leq \log \mathbb{E}\left(e^{\operatorname{Pois}( w(k) )} \right) - N^\frac{\gamma}{2-2\gamma} \\
&= w(k) (e-1) - N^\frac{\gamma}{2-2\gamma} .
\end{split}
\]
where by a slight abuse of notation we also write ${\rm Pois}(w(k))$ for a Poisson random variable with parameter $w(k)$.
Hence, by a union bound and using that $w(k)$  is decreasing, we obtain
\[
\begin{split}
\mathbb{P}\left( \exists k>K_\gamma : d(k)> N^\frac{\gamma}{2-2\gamma} \right)&\leq e^{- N^\frac{\gamma}{2-2\gamma}} \sum_{k=K_\gamma+1}^N e^{w(k) (e-1)} \\
&\leq (N-K_{\gamma}+1) e^{w(K_\gamma) (e-1)- N^\frac{\gamma}{2-2\gamma}} \\
&\leq N \exp \left( N^{\frac{\gamma}{2-2\gamma}} \left( -1 + \frac{\beta (e-1)}{1-\gamma} \frac{1}{\log^{\gamma} N} \right)  \right)=o(1),\\
\end{split}
\]
where in the last step we used that
$w(k) \leq \frac{\beta}{1-\gamma} (\frac{N}{k})^\gamma$, see also~\eqref{eq:up_bound_w}.
\end{proof}

\begin{proof}[Proof of Proposition \ref{prop:small_sum_of_degrees}]
As before it suffices to bound the degrees in the MNR graph.
By Remark~\ref{rem:depletion}, the tree construction yields the stochastic upper bound
\[ \max_{k \notin V_{\rm big} } \sum_{v \in \mathscr{C}(k)} \de (v)
\preceq \max_{k = K_\gamma +1,\ldots,N} \sum_{v \in \mathcal{T}^k_k} \de(v)
\leq \max_{k = K_\gamma + 1,\ldots,N} 2 |\mathcal{T}^k_k| , \]
where  $\mathcal{T}^k_k$ are independent Galton-Watson trees with the following law: 
the root of $\mathcal{T}^k_k$ has a ${\rm Pois}(w(k))$ number of offspring and all other offspring 
are independent and have a  ${\rm Pois}(W^*_{N,k})$ distribution.

	For any thinning level $z \in [N]$, we recall that $W^*_{N,z}$  is defined
as
\begin{equation}\label{truncated_weight}
W^*_{N,z}
\stackrel{({\rm d})}{=}
w (M)
\mathbbm{1}_{M>z},
\end{equation}
where $M$ is the usual mark distribution (which chooses $i \in [N]$  with probability proportional to $i^{-\gamma}$).

The same argument as in the proof of Proposition~\ref{prop:small_degrees_arent_big} also shows that there exists a constant $C_1 > 0$ such that if $\emptyset_k$ denotes the root of $\mathcal{T}_k^k$, then we have that the event
\begin{equation}\label{eq:event_E1} E_1 := \Big\{ \max_{k =K_\gamma +1 , \ldots, N} \de(\emptyset_k) \leq C_1 N^{\frac{\gamma}{2-2 \gamma}}  \Big\} \end{equation}
satisfies $\p(E_1) \rightarrow 1$ as $N \rightarrow \infty$.

To bound the size of the trees $\mathcal{T}_k^k$, we use the standard connection to random walks, see e.g.~\cite[Section 3.3]{van2016random}, where we consecutively record the number of offspring of each individual in the branching process. 

Define
\[
R:=\left\lfloor C_1 N^\frac{\gamma}{2-2\gamma} \right \rfloor, 
\]
for the same constant $C_1$ as in~\eqref{eq:event_E1}.

Then, define a random walk $(S_n)_{n\geq 0}$, where we set $S_0 = 0, S_1 = R$ and for $i \geq 1$ suppose that the increments  $S_{i+1} - S_i$
are independent and with the same distribution as $D-1$, where 
$D \sim {\rm Pois}(W^*_{N,z})$. The random walk connection then yields
that for any $z \in \{K_\gamma + 1, \ldots, N\}$ and any $L \geq 0$
\[ \p(|\mathcal{T}^z_z| > L; E_1) \leq \p(S_{L+1} \geq 0 ) . \]
Now, for large $N$
we define
\[ L := \frac{2 R}{1-\mathbb{E}(W^*_N)} , \]
which is well-defined as $\mathbb{E}(W^*_N) \rightarrow \E(W^*) < 1$.
Moreover, we define $(X_i^{(z)})_{i \geq 1}$ as a sequence of i.i.d.\ random variables with the same distribution as $D - \E(W_{N,z}^*)$, where  $D \sim {\rm Pois}(W_{N,z}^*)$. Then, we can estimate using  $\E(W_{N,z}^*) \leq \E(W_{N,z})$ 
that
 \begin{align}
\mathbb{P}\left(
|\mathcal{T}^z_z| > L; E_1\right)
& \leq \mathbb{P} \bigg( R + \sum_{i=1}^L (S_{i+1} - S_i) > L \bigg) 
\notag \\ & 
\leq
\mathbb{P} \bigg(
\sum_{i=1}^{L} X^{(z)}_i \geq L\Big(1-\mathbb{E}( W^*_{N,z} )\Big)-R
\bigg)
\notag \\ & \leq
\mathbb{P} \bigg(
\sum_{i=1}^{L} X^{(z)}_i \geq R
\bigg) . \notag
\end{align}
Then, by Markov's inequality for any $r>2 \vee \left(\frac{1}{\gamma}-1\right)$ and $N$ sufficiently large, we can deduce that 
\begin{equation}\label{post_markov}
\mathbb{P}\left(
|\mathcal{T}^z_z| > L; E_1\right)
 \leq \frac{\mathbb{E}|
\sum_{i=1}^{L}  X^{(z)}_i
|^r}{R^r}  \leq
C_2\frac{
L^{\frac{r}{2}} w(z)^{\frac{r}{2}\left( 3-\frac{1}{\gamma} \right)^+} + L w(z)^{r+1-\frac{1}{\gamma}}
}{R^r} , 
\end{equation}
where 
$C_2 > 0$ is a suitable constant, coming out of the estimate on the fractional moment, which  we defer to Lemma \ref{walk_moments}.

For  the remainder of the proof, 
we will need to fix an even  larger $r$ and assume that 
\[
r>\frac{4-4\gamma}{\gamma \wedge (1-2\gamma)}.
\]
By a union bound combined with the bound in~\eqref{post_markov} and the definitions of $L$ and $R$, we find that there exists a $C_3 >0$ such that
\begin{equation}\label{eq:4_2_union}\begin{aligned}
\p\Big( \max_{z \in \{K_\gamma+1, \ldots, N\}}  |\mathcal{T}_z^z| >L; E_1
\Big)&  \leq 
\sum_{z = K_\gamma + 1}^N
\mathbb{P}\left(
|\mathcal{T}_z^z| >L; E_1
\right)\\
& \leq
C_3 \sum_{z =K_\gamma + 1}^N
\frac{N^{\frac{r}{2}\left( 3\gamma-1 \right)^+ - \frac{r}{2}\frac{\gamma}{2-2\gamma}}}{z^{\frac{r}{2}\left( 3-\frac{1}{\gamma} \right)^+} \log^{\frac{r}{2}} N}
+
\frac{N^{\gamma r + \gamma -1 +(1-r)\frac{\gamma}{2-2\gamma}}}{z^{\gamma r + \gamma -1}\log^{r-1} N}\end{aligned} 
\end{equation}
and we require that this sum tends to $0$. For the first term, observe
\[
\frac{N^{\frac{r}{2}\left( 3\gamma-1 \right)^+ - \frac{r}{2}\frac{\gamma}{2-2\gamma}}}{z^{\frac{r}{2}\left( 3-\frac{1}{\gamma} \right)^+} }
\leq
\frac{N^{\frac{r}{2}\left( 3\gamma-1 \right)^+ - \frac{r}{2}\frac{\gamma}{2-2\gamma}}}{N^{\frac{1-2\gamma}{2-2\gamma}\frac{r}{2}\left( 3-\frac{1}{\gamma} \right)^+} }
=
\left(
N^{\frac{\left( 3\gamma-1 \right)^+-\gamma}{2-2\gamma}}
\right)^\frac{r}{2}, 
\]
and we note that the exponent of $N$ in this expression is less than $-1$ by our choice of $r$ and  since
\[
\left( 3\gamma-1 \right)^+-\gamma=
\begin{cases}
2\gamma-1,  &\mbox{if } \gamma>\frac{1}{3}, \\
-\gamma, &\mbox{if } \gamma\leq \frac{1}{3}.\\
\end{cases}
\]
In particular, the first term in~\eqref{eq:4_2_union} converges to $0$. 

For the second term, again by our choice of $r$, we have that $r > \frac{2-\gamma}{\gamma}$
so that we can deduce that
\[
\sum_{z = K_\gamma+1}^N
\frac{1}{z^{\gamma r + \gamma -1}}=O\left(
N^{\frac{1-2\gamma}{2-2\gamma}(2-\gamma r - \gamma)}
\right). 
\]
In particular, 
we have that
\[\begin{aligned}
\frac{N^{\gamma r + \gamma -1 +(1-r)\frac{\gamma}{2-2\gamma}}}{\log^{r-1} N}
\sum_{z>K_\gamma}\frac{1}{z^{\gamma r + \gamma -1}}
& =O\left(
\frac{N^{\gamma r + \gamma -1 +(1-r)\frac{\gamma}{2-2\gamma}}}{\log^{r-1} N}
N^{\frac{1-2\gamma}{2-2\gamma}(2-\gamma r - \gamma)}
\right)
\\
& =O\left(\log^{1-r} N\right)=o(1) , \end{aligned}
\]
which shows that also the second term in~\eqref{eq:4_2_union} tends to $0$. 

Thus, we can conclude from~\eqref{eq:4_2_union} that with high probability on the event $E_1$  every tree has size at most $L$. Recalling that $E_1$ occurs also with high probability and the asymptotics of $L$ then gives the required bound.
\end{proof}

The following lemma provides the moment estimate that was required in the  proof of Proposition \ref{prop:small_sum_of_degrees} above.

\begin{lemma}\label{walk_moments}
For $L \in \mathbb{N}, z \in [N]$, suppose $X^{(z)}_i, i\leq L$ are independent random variables with the same distribution as $D - \E(W_{N,z}^*)$, where $D \sim {\rm Pois} (W^*_{N,z})$ and where $W^*_{N,z}$ is defined in~\eqref{truncated_weight}. Then, 
for any $L, N \in \mathbb{N}$,  $ r>2 \vee \left(\frac{1}{\gamma}-1\right)$ and $z>1$
we have
\[
\mathbb{E}\left(
\Big|
\sum_{i=1}^{L} X^{(z)}_i
\Big|^r
\right)
\leq C \Big(
L^{\frac{r}{2}} w(z)^{\frac{r}{2}\left( 3-\frac{1}{\gamma} \right)^+} + L w(z)^{r+1-\frac{1}{\gamma}}
\Big) , 
\]
where  $C>0$ is a constant depending only on $r$ and $\gamma$
and $w(z)$ is defined in~\eqref{weight_defn}.
\end{lemma}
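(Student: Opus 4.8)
The plan is to apply the Marcinkiewicz--Zygmund (or von Bahr--Esseen / Rosenthal) inequality to the centred sum $\sum_{i=1}^L X_i^{(z)}$ and then estimate the resulting single-summand moments $\mathbb{E}|X_1^{(z)}|^r$ and $\mathbb{E}|X_1^{(z)}|^2$ explicitly in terms of $w(z)$. Since the $X_i^{(z)}$ are i.i.d.\ and centred, Rosenthal's inequality gives, for $r > 2$, a constant $C_r$ with
\[
\mathbb{E}\Big| \sum_{i=1}^L X_i^{(z)} \Big|^r
\leq C_r \Big( L\, \mathbb{E}\big|X_1^{(z)}\big|^r + \big(L\, \mathbb{E}\big|X_1^{(z)}\big|^2\big)^{r/2}\Big).
\]
This already produces the two-term structure of the claimed bound, with $L w(z)^{r+1-1/\gamma}$ coming from the first term and $L^{r/2} w(z)^{(r/2)(\cdots)}$ from the second, once we show $\mathbb{E}|X_1^{(z)}|^2 = O(w(z)^{(3-1/\gamma)^+})$ and $\mathbb{E}|X_1^{(z)}|^r = O(w(z)^{r+1-1/\gamma})$.

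The heart of the matter is therefore the moment computation for a single centred mixed-Poisson variable $X = D - \mathbb{E}(W^*_{N,z})$ with $D \sim \mathrm{Pois}(W^*_{N,z})$, where $W^*_{N,z} \stackrel{d}{=} w(M)\mathbbm{1}_{M>z}$. First I would record that $\mathbb{E}|X|^2 = \mathrm{Var}(D) = \mathbb{E}(W^*_{N,z}) + \mathrm{Var}(W^*_{N,z})$; since $\mathbb{E}(W^*_{N,z}) = O(1)$ the dominant piece is $\mathrm{Var}(W^*_{N,z}) \le \mathbb{E}((W^*_{N,z})^2)$, and here the power-law tail $\mathbb{P}(W^*_{N,z} > x) = \Theta(x^{-(1/\gamma -1)})$ truncated at the minimal value $\asymp 1$ and effective maximal value $\asymp w(z)$ gives $\mathbb{E}((W^*_{N,z})^2) = \Theta(1)$ when $2 - (1/\gamma - 1) < 0$, i.e.\ $\gamma < 1/3$, and $\Theta(w(z)^{2-(1/\gamma-1)}) = \Theta(w(z)^{3-1/\gamma})$ when $\gamma > 1/3$ — exactly the exponent $(3-1/\gamma)^+$. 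For the $r$-th moment, I would condition on $W^*_{N,z}$: a centred Poisson with parameter $\lambda$ has $\mathbb{E}|{\rm Pois}(\lambda)-\lambda|^r \le C_r(\lambda + \lambda^r)$ (standard, e.g.\ by the recursion for Poisson central moments or by treating $\lambda \le 1$ and $\lambda \ge 1$ separately), so $\mathbb{E}|X|^r \le C_r\big(\mathbb{E}(W^*_{N,z}) + \mathbb{E}((W^*_{N,z})^r)\big)$, and again the tail estimate yields $\mathbb{E}((W^*_{N,z})^r) = \Theta(w(z)^{r-(1/\gamma - 1)}) = \Theta(w(z)^{r+1-1/\gamma})$, using $r > 1/\gamma - 1$ to ensure the truncation at $w(z)$ dominates over the truncation at the lower cutoff $\asymp 1$. (One must also note $w(z)^{r+1-1/\gamma} \ge $ const so the $\mathbb{E}(W^*_{N,z})$ term is absorbed.)

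Assembling: plugging these two bounds into Rosenthal gives
\[
\mathbb{E}\Big|\sum_{i=1}^L X_i^{(z)}\Big|^r \le C\Big( L\, w(z)^{r+1-1/\gamma} + L^{r/2} w(z)^{(r/2)(3-1/\gamma)^+}\Big),
\]
which is the claim. The main obstacle I anticipate is purely bookkeeping in the tail-integral estimates for $\mathbb{E}((W^*_{N,z})^p)$: one needs the density of $W^*_{N,z}$, which is (up to the restriction to $M > z$) the density~\eqref{eq:W_density}, supported on $[\mathrm{const}, \asymp w(z)]$, and one must carefully track when the integral $\int x^{p} \cdot x^{-1/\gamma}\,\mathrm{d}x$ is dominated by its upper or lower endpoint — this is where the $(\cdot)^+$ and the hypothesis $r > 2 \vee (1/\gamma - 1)$ enter. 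A secondary (minor) point is justifying the Poisson central-moment bound $\mathbb{E}|{\rm Pois}(\lambda) - \lambda|^r \le C_r(\lambda + \lambda^r)$ uniformly, which can be done via the well-known fact that the $m$-th central moment of ${\rm Pois}(\lambda)$ is a polynomial in $\lambda$ of degree $\lfloor m/2\rfloor$ with no constant term, interpolated to non-integer $r$ by Lyapunov's inequality.
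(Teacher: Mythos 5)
Your proposal is correct and follows essentially the same route as the paper: apply Rosenthal's inequality to the centred i.i.d.\ sum, bound $\mathbb{E}|X_1^{(z)}|^2$ via $\mathrm{Var}(D)=\mathbb{E}(W^*_{N,z})+\mathrm{Var}(W^*_{N,z})$ together with the power-law tail of $W^*_{N,z}$ truncated at $w(z)$, and bound $\mathbb{E}|X_1^{(z)}|^r$ by conditioning on $W^*_{N,z}$ and using a uniform $r$-th-moment bound for a (centred) Poisson, reducing to $\mathbb{E}((W^*_{N,z})^r)=\Theta(w(z)^{r+1-1/\gamma})$. The only cosmetic differences from the paper are that the paper evaluates $\mathbb{E}((W^*_{N,z})^r)$ by an explicit sum over marks rather than a tail integral, and uses the raw-moment bound $\sup_{\lambda \ge \beta/(1-\gamma)}\mathbb{E}(\mathrm{Pois}(\lambda)^r)/\lambda^r<\infty$ (via a CLT argument) where you use a central-moment bound; both are minor technical variants. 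One small point worth being explicit about: since the centring is by the deterministic constant $\mathbb{E}(W^*_{N,z})$ rather than by the random $W^*_{N,z}$, the conditional law of $X$ given $W^*_{N,z}=\lambda$ is $\mathrm{Pois}(\lambda)-\mathbb{E}(W^*_{N,z})$, not $\mathrm{Pois}(\lambda)-\lambda$; the triangle inequality $|D-\mathbb{E}(W^*_{N,z})| \le |D-\lambda|+|\lambda-\mathbb{E}(W^*_{N,z})|$ plus $c_r$-inequality closes this gap and only changes constants.
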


\begin{proof}
These calculations use a similar strategy to the proof of~\cite[Theorem 1.1]{janson2008largest}. We write $X^{(z)}$ for a random variable with the same distribution as $X^{(z)}_i$. 
We start by estimating the second and the $r$th moment of $X^{(z)}$.
First, note that as
\[
\mathbb{E} \left(
W^*_{N,z}
\right)\leq
\mathbb{E} \left(
W^*_{N}
\right)
\rightarrow
\frac{\beta}{1-2\gamma},
\]
we deduce for $N$ sufficiently large
\[ \begin{aligned}
\mathbb{E}\big( (X^{(z)})^2 \big)
& =
\operatorname{Var} \left( \operatorname{Pois} \left( X^{(z)} \right) \right)
=
\operatorname{Var} \left(
\mathbb{E} \left(
X^{(z)} | W^*_{N,z}
\right)
\right)
+
\mathbb{E} \left(
\operatorname{Var} \left(
X^{(z)} | W^*_{N,z}
\right)
\right)
\\
& =
\operatorname{Var} \left(
W^*_{N,z}
\right)
+
\mathbb{E} \left(
W^*_{N,z}
\right)
\leq
\mathbb{E} \left( \left(
W^*_{N,z}
\right)^2 \right)
+
\frac{\beta}{1-2\gamma}. 
\end{aligned} \]

Using Lemma \ref{le:domination}, we find a constant $C_1>0$ independent of $N$ for any $\alpha>1$ such that
\[
\begin{split}
\mathbb{E} \left( (W^*_{N,z})^2 \right)
&= \int_0^\infty 2x \, \mathbb{P}(W^*_{N,z}>x)\, {\rm d}x
= \left( \frac{\beta}{1-\gamma} \right)^2 + \int_{\frac{\beta}{1-\gamma}}^{w(z)} 2x \, \mathbb{P}(W^*_{N}>x)\, {\rm d} x\\
&\leq \left( \frac{\beta}{1-\gamma} \right)^2 +  \int_{\frac{\beta}{1-\gamma}}^{w(z)} 2x \, \mathbb{P}(\alpha W^*>x)\, {\rm d} x
\leq C_1 \int_{\frac{\beta}{1-\gamma}}^{w(z)} x^{2-\frac{1}{\gamma}} \, {\rm d}x\\
&\leq C_1 w(z)^{\left( 3-\frac{1}{\gamma} \right)^+},\\
\end{split}
\]
for $N$ sufficiently large, where we used the explicit density of $W^*$ identified in~\eqref{eq:W_density}.
We now have to estimate $\mathbb{E}\big((X^{(z)})^r\big)$ for  $r$ as above. We claim that
\begin{equation}\label{eq:claim_0604}
\sup_{\lambda\geq \frac{\beta}{1-\gamma}} \frac{\mathbb{E}(\operatorname{Pois}(\lambda)^r)}{\lambda^r}<\infty.
\end{equation}
Indeed,
since the Poisson distribution has an exponential moment, we know
$
\lambda \mapsto {\mathbb{E}(\operatorname{Pois}(\lambda)^r)}/{\lambda^r}
$
is finite and continuous on $\left[ \frac{\beta}{1-\gamma},\infty \right)$. Further
\[
\left\| \frac{\operatorname{Pois}(\lambda)}{\lambda} \right\|_r
\leq
\frac{ \lambda + \left\| \operatorname{Pois}(\lambda) - \lambda \right\|_r }{\lambda}
=
1+\frac{ \left\| \frac{\operatorname{Pois}(\lambda) - \lambda}{\sqrt{\lambda}} \right\|_r }{\sqrt{\lambda}}
\rightarrow
1
\]
as $\lambda \rightarrow \infty$, by the central limit theorem.
This proves the claim~\eqref{eq:claim_0604}.
Hence we can say because $X^{(z)} \preceq \operatorname{Pois} \left( W^*_{N,z} \right)$ we have some $C_2>0$ such that
\[
\mathbb{E}\left(\left(X^{(z)}\right)^r\right) \leq C_2 \mathbb{E}( (W^*_{N,z})^r)
\]
for $N$ sufficiently large. We have, given $r>\frac{1}{\gamma}-1$,
\[ \begin{aligned} 
\frac{\mathbb{E}( (W^*_{N,z})^r)}{z\mathbb{P}(M=z)w(z)^r}
& =
\frac{\sum_{j=z}^N \mathbb{P}(M=j) w(j)^r}{z\mathbb{P}(M=z)w(z)^r}
=
\sum_{j=z}^N
\frac{1}{z} \frac{j^{-\gamma} j^{-\gamma r}}{z^{-\gamma} z^{-\gamma r}}\\
& \leq z^{\gamma+\gamma r -1} \int_{z-1}^\infty j^{-\gamma-\gamma r} {\rm d} j
= \frac{1}{\gamma+\gamma r -1} \left(1-\frac{1}{z} \right)^{1-\gamma-\gamma r}\\
&\leq\frac{1}{\gamma+\gamma r -1} \left(1-\frac{1}{N} \right)^{1-\gamma-\gamma r}
=O(1), 
\end{aligned}
\]
as $N\rightarrow \infty$, noting that this bound holds uniformly over all $z>1$. Thus there is some constant $C_3$ such that, for every $z>1$,
\[
\mathbb{E}\left(\left( X^{(z)} \right)^r\right) < C_3 z\mathbb{P}(M=z)w(z)^r.
\]

Therefore by Rosenthal's inequality \cite[Chapter 3, Theorem 9.1]{gut2013probability} we obtain for $r>2\vee (\frac{1}{\gamma}-1)$
\[ \begin{aligned} 
\mathbb{E}\bigg(\Big|
\sum_{i=1}^{L} \left( X_i^{(z)} \right)
\Big|^r\bigg)
& \leq
C_3 L^{r/2} \big( \mathbb{E}(\big((X^{(z)})^2\big)\big)^{r/2}
+C_4 L  \mathbb{E}\big((X^{(z)})^r\big)\\
& \leq
C_5 L^{r/2}
\left(1+w(z)^{\left( 3-\frac{1}{\gamma} \right)^+}\right)^{r/2}
+C_6 L z\mathbb{P}(M=z)w(z)^r\\
& \leq
C_7 L^{r/2}
w(z)^{\left( 3-\frac{1}{\gamma} \right)^+\frac{r}{2}}
+C_8 L
\frac{z^{1-\gamma}w(z)^r}{N^{1-\gamma}}, 
\end{aligned} 
\]
as claimed.
\end{proof}

\begin{proof}[Proof of Proposition \ref{prop:stars_and_leaves}]
(a) Again we construct the SNR network in $\cG_{\beta,\gamma}$ via the MNR network and  use the tree construction of Proposition~\ref{prop:tree_coupling} for an upper bound. Recall by a standard concentration argument, there is some universal constant $C>0$ such that for $\lambda$ large enough
\begin{equation}\label{eq:poisson_deviations}
\mathbb{P}\left( \left| \operatorname{Pois} \left( \lambda \right) - \lambda \right| \leq \frac{\lambda}{2} \right)\leq e^{-C \lambda},
\end{equation}
so that for the unthinned degrees in $\mathcal{T}^k$ we can immediately compare $d(k)$ to $(N/k)^\gamma$. The upper bound $d(k)$ in the MNR network then follows immediately,  because thinning can only decrease the degree.

For a lower bound, it suffices to show that overall not too many vertices are being thinned in the big components. More precisely, define
\[ Z^{\rm thin} := \sum_{k =1}^{K_\gamma} \sum_{v \in \mathcal{T}^k} \1_{\{ v \small\mbox{ thinned}\}}. \]
We will show in the following $Z^{\rm thin} \leq (\log N)^3$ with high probability, which immediately implies the lower bound on the degres (as these are polynomially large) by the same concentration argument for the Poisson degrees as before.

From Lemma \ref{le:unthinned_sum_of_degrees} we have with high probability 
\[ 
\max_{k \leq K_\gamma}
\frac{|\mathcal{T}^k|}{(N/k)^\gamma}
\leq
\max_{k \leq K_\gamma}
\frac{\sum_{z \in \mathcal{T}^k} \de(z)}{(N/k)^\gamma}\leq \log N.
\]
Therefore, by summation and the definition of $K_\gamma$ we have that the event
\[
E_1 = \Big\{ 
\Big|
\bigcup_{i \leq K_\gamma} V(\mathscr{C}(i))\Big|
=
|V_{\rm Big}|
\leq
\frac{1}{1-\gamma}\sqrt{N}\log N  \Big \}, 
\]
satisfies $\p(E_1) \rightarrow 1$ as $N\rightarrow \infty$.

We can bound $Z^{\rm thin}$ by the double sum over vertices that have the same mark.  Thus, if we write $M,M'$ for two independent copies of the mark distribution, then  by distinguishing the cases of root vertices and remaining vertices, we obtain
\[ \begin{aligned}
\E(Z^{\rm thin}\1_{E_1}) & \leq 
\frac{1}{1-\gamma}
\sqrt{N} (\log N ) \mathbb{P}\left(M\leq K_\gamma \right)
+
\frac{1}{(1-\gamma)^2}
N(\log N )^{2} \mathbb{P}\left(M=M'\right) \\
& =O \left( \log^{2-\gamma} N + \log^2 N \right), 
\end{aligned}
\]
where we used that 
\begin{equation}\label{eq:MMprime}
\mathbb{P}(M=M^\prime)=\sum_{i=1}^N \left(\frac{i^{-\gamma}}{\sum_{j=1}^N j^{-\gamma}}\right)^2=\Theta\left( \frac{N^{1-2\gamma}}{N^{2-2\gamma}} \right)=\Theta\left( \frac{1}{N} \right).
\end{equation}
Hence, by Markov's inequality, we have that
\[ \p(Z^{\rm thin} \geq (\log N)^3) \leq \frac{\E(Z^{\rm thin}\1_{E_1})}{(\log N)^3} + \p(E_1^c)  , \]
and the right hand side tends to zero as $N \rightarrow \infty$.

(b) Now each of the neighbours of the root vertices $\emptyset_k$ which was not thinned has offspring $D \sim \operatorname{Pois}(W_N^*)$ and independently no children with  probability
\[
p_0=
\mathbb{P}(D=0)
=\mathbb{E}(e^{-W_N^*})
\geq
e^{-\mathbb{E}(W_N^*)}
\rightarrow
e^{-\frac{\beta}{1-2\gamma}}
>0
\]
by using Jensen's inequality. This gives the bound on $|L_k|$ by a binomial concentration argument.
\end{proof}

\begin{proof}[Proof of Lemma \ref{le:subcritical_branch_control}]
By Lemma \ref{le:unthinned_sum_of_degrees} we know that the event
\begin{equation}\label{eq:big_components}
E_1 := \bigg\{ |\mathscr{C}(k)|\leq \left(\frac{N}{k}\right)^\gamma \log N \mbox{ for all } k \leq K_\gamma \bigg\}, 
\end{equation}
satisfies $\p(E_1) \rightarrow 1$ as $N \rightarrow \infty$. 

As observed in Remark~\ref{rem:cycles}, the thinning operation does not create cycles between components, nor does it create extra edges between the root vertex and one of its children. 

We now bound the surplus of each component, which is defined as the number of edges more than edges of a tree on the same vertex set. Writing $M$ and $M'$ for two independent copies of the mark distribution, we get
\[
\mathbb{E}(\text{surplus}(\mathscr{C}(k)); E_1)\leq \mathbb{P}(M=M^\prime) \left(\frac{N}{k}\right)^{2\gamma} \log^2 N . 
\]
Hence, we obtain for the total surplus in the big components, using~\eqref{eq:MMprime}
\[
\begin{split}
\sum_{i =1}^{\left\lfloor K_\gamma \right\rfloor }\mathbb{E}(\text{surplus}(\mathscr{C}(i)); E_1)
&\leq \mathbb{P}(M=M^\prime) N^{2\gamma} \log^2 N \int_0^{K_\gamma} i^{-2\gamma} {\rm d}i \\
&=O^{\log N} \left( N^{2\gamma-1} N^{\frac{(1-2\gamma)^2}{2-2\gamma}} \right)\\
&=O^{\log N} \left( N^{\frac{2\gamma-1}{2-2\gamma}} \right)=o(1).
\end{split}
\]
Hence, combining this with the fact that $\p(E_1) \rightarrow 1$, we obtain 
by Markov's inequality that the big components form a forest with high probability.

Now that we know each component is a tree, it makes sense to talk about \emph{branches} of the root vertices. Again, we can stochastically upper bound the sizes of these branches in SNR by the ones in MNR, which are  bounded by the (unthinned) branches in the forest $\mathcal{T}^1, \ldots, \mathcal{T}^{\lfloor K_\gamma \rfloor}$.
In the latter, each of the branches is an independent ${\rm Pois}(W_N^*)$-GW tree.
Note that the total number of these trees is bounded by $\sum_{k = 1}^{\lfloor K_\gamma \rfloor} \de (\emptyset_k)$, where $\emptyset_k$ is the root of $\mathcal{T}^k$.
By the same argument as in the proof of Lemma~\ref{le:unthinned_sum_of_degrees}, we therefore have that there exists a constant $C_2 >0$ such that the event
\[ E_2 = \bigg\{ \sum_{k=1}^{\left\lfloor K_\gamma \right\rfloor }
\de(\emptyset_k) \leq C_2 \sqrt{N} \log^{1-\gamma} N  \bigg\}, \]
satisfies $\p(E_2) \ra 1$, since 
$\sum_{i=1}^{\left\lfloor K_\gamma \right\rfloor } \left(\frac{N}{i}\right)^\gamma = O(\sqrt{N} \log^{1-\gamma} N  )$.
Let $(T_i)_{i \geq 1}$ be a sequence of i.i.d.\ ${\rm Pois}(\alpha W^*)$-GW trees, where 
$\alpha \in \left(1,\frac{1-2\gamma}{\beta}\right)$. 
Further, let $J=\left\lfloor C_2 \sqrt{N} \log N \right\rfloor$.  Then, by the above argument and
Lemma~\ref{le:domination},  we have that
\[
\begin{split}
\mathbb{P}\Big( \max_{k \leq K_\gamma} \max_{B \in \mathcal{B}
(\mathfrak{C}(k))} &  \sum_{ v \in B} \de (v) \geq N^{\frac{\gamma}{2-2\gamma}}\log N ; E_2 \Big) \\
& \leq 
 \mathbb{P}\left( \max_{i=1}^{J} |T_i|>N^{\frac{\gamma}{2-2\gamma}}\log N \right)\\
 & \leq \sum_{i=1}^J \p( |T_1| > N^{\frac{\gamma}{2-2\gamma}} \log N) \\
   &=   O \Big( J \big(N^{\frac{\gamma}{2-2\gamma}} \log N \big)^{-(\frac{1}{\gamma} -1)} \Big) 
 = O \left( \log(N)^{2 - \frac{1}{\gamma} }\right) = o(1) ,
\end{split}
\]
where we used Proposition~\ref{le:poisson_powerlaw} in the  final line. 
Since $\p(E_2^c) \ra 0$, this completes the proof of the lemma.
\end{proof}

\begin{proof}[Proof of Lemma \ref{le:empirical_moment}]
Note that $\de(\emptyset_1)$, the degree of the root of $\mathcal{T}^1$,
satisfies $\de(\emptyset) \sim {\rm Pois}(w(1))$ and $w(1) = \Theta(N^\gamma)$. Hence, we can immediately deduce by standard Poisson concentration that there are constants $c_1 , C_1 > 0$ such that the event
\[ E_1 := \{ c_1 N^\gamma  \leq \de (\emptyset_1) \leq C_1 N^\gamma \} , \]
holds with high probability.

For a lower bound, we note that for the SNR model \[ \sum_{v \in \mathscr{C}(1)} \de(v)^{\eta} \geq \de(1)^{\eta_1} . \]
However, on the event $E_1$ for $\mathcal{T}_1$  the expected number of repeated labels among the children of the root $\emptyset_1$  is of order
\[
O^{\log N}\left(
N^\gamma \mathbb{P}(M=1)
+
N^{2\gamma} \mathbb{P}(M=M')
\right)
=
O^{\log N}\left(N^{2\gamma-1}\right)
=
o(1) , \]
where as before we write $M, M'$ for independent copies of the mark distribution.
So with high probability  we have that $\de(1) = \de(\emptyset_1)\geq c_1 N^\gamma$, which gives the required lower bound.

For the upper bound, by the same arguments used before we only have to bound the degrees in $\mathcal{T}^1$.
Note that with the exception of the degree of the root, all other degrees have the same distribution as $D$, where $D \sim  1+ \operatorname{Pois}\left( W_N^* \right)$.
Note also that by Lemma~\ref{le:unthinned_sum_of_degrees} there exists a constant $C_2$ such that 
the event $E_2 = \{ |\mathcal{T}^1| \leq C_2 N^{\gamma} \log N \}$ occurs with high probability. 
 Therefore, if we let $(D_i)_{i \geq 1}$ be i.i.d.\ random variables with the same distribution as $D$ and set $J := \lfloor C_2 N^{\gamma \log N} \rfloor$, then for any $\eta > 1$.
 \[ \p \bigg( \sum_{v \in \mathcal{T}^1, v \neq \emptyset_1} \de(v)^\eta 
 \geq (\log N)^3 N^{2\gamma}; E_2 \bigg) 
 \leq \p\bigg( \sum_{i=1}^J D_i^\gamma \geq (\log N)^3 N^{\eta\gamma} \bigg) .\]
 To estimate the probability on the right, we use a first moment bound. 
We notice that
\[
\mathbb{E}((W_N^*)^\eta)
= \sum_{i=1}^N \frac{w(i)^{\eta + 1} }{\sum_{j = 1}^N w(j)} = \Theta\begin{cases}
N^{\gamma(\eta +1) -1} & \eta>\frac{1}{\gamma}  - 1,\\
\log N & \eta = \frac{1}{\gamma} -1,\\
1 & \eta < \frac{1}{\gamma} - 1.
\end{cases}
\]

Moreover, there exists a constant $C_3 > 0$ such  that
\[
\mathbb{E}\left(\left(
1+ \operatorname{Pois}\left( W_N^* \right)
\right)^\eta
\right)
\leq
2^\eta \mathbb{E}\left(
1 \vee \operatorname{Pois}\left( W_N^* \right)^\eta
\right)
\leq
C_3 \mathbb{E}\left(
\left(W_N^*\right)^\eta
\right),
\]
and in particular we have that
\[
\mathbb{E} \left( \sum_{i=1}^J  D_i^\eta \right)
=O
\left( (\log N)^2
N^{(\gamma(\eta +1)-1)^+ +\gamma}
\right)
\leq
O
\left( (\log N)^2
N^{\eta\gamma}
\right) , 
\]
where we used that $\gamma < \frac 12$ in the last step. 
Hence, by Markov's inequality, 
\[ \p \bigg( \sum_{v \in \mathcal{T}^1, v \neq \emptyset_1} \de(v)^\eta 
 \geq (\log N)^3 N^{\eta\gamma}; E_2 \bigg) 
 \leq O ((\log N)^{-1} ) . 
 \] 
Since the event $E_2$ occurs with high probability and since we have by the first part that $\de(\emptyset)^\eta  \leq C_1^\eta N^{\eta\gamma}$ on the high probability event $E_1$, the upper bound in the statement of the lemma follows immediately.
\end{proof}

\begin{proof}[Proof of Proposition \ref{prop:existence_simple_double_star}]

Consider the index set
\[
I:=
\left[
N^\frac{1-2\gamma}{2-2\gamma},
N^\frac{1-2\gamma}{2-2\gamma} \log N
\right] \cap \mathbb{N}
\subset [K_\gamma].
\]

We calculate the MNR weight as
\[
w(I)\sim \sum_{k \in I} \frac{\beta}{1-\gamma}\left( \frac{N}{k} \right)^\gamma
=\Theta\left(
\sqrt{N} \log^{1-\gamma} N
\right)
\]
and so in for the MNR model the expected number of edges on the subgraph induced on $I$ is
\[
\frac{w(I)^2}{w([N])}=\Theta \left( \log^{2-2\gamma} N \right), 
\]
and so diverges to $\infty$.
Since this number is Poisson distributed, it must then be nonzero with high probability. After collapsing any multi-edges to arrive at the SNR model it must still be nonzero with high probability.

We can take any such adjacent pair $(x,y) \in I^2$ to create a double star, which by Lemma \ref{le:subcritical_branch_control} is a tree and by Proposition \ref{prop:star_degrees} has
\[
\de(x) \mbox{ and } \de(y) =\Theta_{\mathbb{P}}^{\log N} \left(
N^\frac{\gamma}{2-2\gamma}
\right).
\]

For the final claim of the Proposition we consider the empirical moment. The estimate on the moment can be proved in the same way as in the previous proof of Lemma \ref{le:empirical_moment}, but instead we now have to control the $\eta$th empirical moment
of  an i.i.d.\ sequence $D_i, i =1, \ldots, \lfloor N^\frac{\gamma}{2-2\gamma} \log N \rfloor$ where 
$D_{i} \sim 1+ \operatorname{Pois}\left( W_N^* \right).$
Then, the result follows by analogous argument, combined with  the fact that with high probability we do not see any thinning on this double star.
\end{proof}

\begin{proof}[Proof of Proposition \ref{prop:existence_long_double_star}]
We first prove the statement for the multigraph MNR.
Let  $V$ be the set of vertices with weight $w$  (as defined in~\eqref{weight_defn}) less than $1$,
\[
V:=\left\{ v \in[N] : w(v) < 1 \right\}.
\]
As in the previous proof, we consider 
$I:=
\left[
N^\frac{1-2\gamma}{2-2\gamma},
N^\frac{1-2\gamma}{2-2\gamma} \log N
\right] \cap \mathbb{N}
\subset [K_\gamma].$
We split both vertex sets into even and odd vertices as
\[
V^{\rm even}:=V \cap \left( 2\mathbb{N} \right);
\qquad
V^{\rm odd}:=V \cap \left( 2\mathbb{N} +1 \right);
\]
\[
I^{\rm even}:=I \cap \left( 2\mathbb{N} \right);
\qquad
I^{\rm odd}:=I \cap \left( 2\mathbb{N} +1 \right).
\]

Recall that $w(v) \leq \frac{\beta}{1-\gamma} (\frac{N}{v} )^\gamma$ by~\eqref{eq:up_bound_w}, so 
any vertex $v$ with $v > 
N \left( \frac{\beta}{1-\gamma} \right)^{1/\gamma}
$ is in $V$. Since we also that have by assumption $\frac{\beta}{1-\gamma}<\frac{1-2\gamma}{1-\gamma}<1$, we can conclude that
 $|V|=\Theta(N)$. Thus, 
\[
\frac{w(V^{\rm even})}{N}
\sim
\frac{w(V^{\rm odd})}{N}
\rightarrow
\rho>0, 
\]
where as before we write $w(A) = \sum_{i \in A} w(i)$ for any $A\subset [N]$.
We also recall from~\eqref{eq:total_weight} that $w([N]) \sim \frac{\beta N}{(1-\gamma)^2} =   \Theta(N)$
and finally for the large degree sets, we get
\[
w(I^{\rm even})
\sim w(I^{\rm odd})
\sim \frac{1}{2} \sum_{k \in I}\frac{\beta}{1-\gamma}\left( \frac{N}{k}\right)^\gamma
\sim \frac{\beta }{2(1-\gamma)^2} N^\gamma K_\gamma^{1-\gamma}.
\]

The number of edges from $V^{\rm even}$ to $I^{\rm even}$ in the MNR model is Poisson distributed with mean
\[
\frac{w(V^{\rm even})w(I^{\rm even})}{w([N])}
=\Theta \left(
\frac{N \cdot N^\gamma K_\gamma^{1-\gamma}}{N}
\right)
=\Theta \left(
\sqrt{N}\log^{1-\gamma}N
\right)
\]
and similarly for $V^{\rm odd}$ to $I^{\rm odd}$, so by Poisson concentration we have $\Theta_{\mathbb{P}}\left(
\sqrt{N}\log^{1-\gamma}N
\right)$ edges between each. However for any particular $v \in V$ and ${\rm par} \in \{{\rm odd}, {\rm even}\}$ we see a number of edges with mean 
\[
\frac{w(v)w(I^{\rm par})}{w([N])}
\leq
\frac{1 \cdot \frac{\beta }{2(1-\gamma)^2} N^\gamma K_\gamma^{1-\gamma}}{\frac{\beta }{(1-\gamma)^2}N}
=
\frac{\log^{1-\gamma} N}{2\sqrt{N}}
\]
so the probability that this $v$ received more than one edge from $I^{\rm par}$ is bounded by
\[
O\left(
\frac{\log^{2-2\gamma} N}{N}
\right)
\]
and hence by a union bound we will see only $O_{\mathbb{P}}^{\log N}(1)$ such instances. Further, any vertex $v \in V$ has $\de (v) \preceq \operatorname{Pois}(1)$ and so by the union bound
\[
\max_{v \in V} \de (v)=O_{\mathbb{P}}\left(
\log N
\right).
\]

So because from $\Theta_{\mathbb{P}}\left(
\sqrt{N}\log^{1-\gamma}N
\right)$ total edges, only $O_{\mathbb{P}}^{\log N}(1)$ vertices in $V^{\rm par}$ have received more than $1$, and at most $O_{\mathbb{P}}^{\log N}(1)$ edges at each, we conclude that $\Theta_{\mathbb{P}}\left(
\sqrt{N}\log^{1-\gamma}N
\right)$ vertices in $V^{\rm par}$ received a unique edge. Denote the sets which are connected by a unique edge $\cE \subset V^{\rm even}$ and $\cO \subset V^{\rm odd}$.

For the final stage of the construction, each vertex $o \in \cO$ has conditionally
\[
e(o,\cE) \succeq \operatorname{Bin}\left(
\Theta_{\mathbb{P}}\left(
\sqrt{N}\log^{1-\gamma}N
\right),
\frac{\beta}{N}
\right)
\]
so we find a single edge into $\cE$ with probability $\omega_{\mathbb{P}}\left(1/{\sqrt{N}}\right)$, and each vertex incident to this edge has no further edges with probability at least $1/e$. So, both have no further edges with probability at least $1/e^2$. Hence amongst the $|\cO|=\omega_{\mathbb{P}}\left(\sqrt{N}\right)$ trials we will find an adjacent pair each of degree $2$, with high probability.

We found a path $\mathcal{P}$ connecting
$I^{\rm odd} \leftrightarrow
V^{\rm odd} \leftrightarrow
V^{\rm even} \leftrightarrow
I^{\rm even}$ in the MNR model. 
Since each of these sets is disjoint, we know that after collapsing multi-edges to obtain the SNR model the path will still exist, and will then satisfy the criteria for our ``double star''. 
\end{proof}

\section{Voter models}\label{voter_models_section}

In this section, we will prove the two main theorems about the asymptotics of the consensus time. In Section~\ref{ssec:proof_classical}, we will consider the classical voter model and prove Theorem~\ref{class_subcrit}.  Then, in Section~\ref{ssec:proof_discursive} we will prove Theorem~\ref{obl_subcrit}
for the discursive voter model.
Throughout we will use the duality of the voter model to a system of coalescing random walks as described in Section~\ref{sec:duality}. We will also use the notation regarding various random walks statistics from that section.

\subsection{Consensus time for the classical voter model}\label{ssec:proof_classical}

In this section, we will consider the classical voter model as defined in Definition~\ref{def:voter}(a). Throughout, let $G_N \in \mathcal{G}_{\beta,\gamma}$ for $\beta +2 \gamma < 1$ be the underlying graph. 
We note that this version of the voter model fits into the general setting of a $Q$-voter model of Section~\ref{sec:duality} if for $\theta \in \mathbb{R}$ we consider $Q = Q^\theta$ defined as
\begin{equation}\label{eq:Q_theta} Q^\theta(i,j) = \de(i)^{\theta -1} \quad \mbox{if } i \sim j \mbox{ in } G_N. \end{equation}
As before, we write $\p^\theta$ for the law of (and $\E^\theta$ for the expectation with respect to)  the coalescing random walks with generator $Q^\theta$.

If we denote by $\mathscr{C}_1, \ldots, \mathscr{C}_k$ the connected components of $G_N$, then these also correspond to the irreducible components of the Markov chain with generator $Q^\theta$. So if we let $\pi = (\pi(z) , z \in V(G_N))$ be defined via
\[ \pi(z) = \frac{\de(z)^{1-\theta}}{\sum_{y \in \mathscr{C}_j} \de(y)^{1-\theta}} , \quad \mbox{ for } z \in \mathscr{C}_j, \]
for $j \in [k]$,
then $\pi|_{\mathscr{C}_j}$ is the invariant measure of the $Q^\theta$ Markov chain restricted to $\mathscr{C}_j$. 

Before the main proof, we show an elementary bound on the meeting time of two independent random walks, when the component contains a star, i.e.\ 
if there exists a vertex $k$ with a set of neighbours $L_K$, each of degree $1$ 
(compare  Proposition~\ref{prop:stars_and_leaves}).

\begin{lemma}\label{le:meeting_star}
Let $k \in [N]$ be such that $L_k$, the set of its neighbours of degree $1$, is non empty.
Let $(X_t)_{t \geq 0}$ and $(Y_t)_{t \geq 0}$ be  independent Markov chains on $\mathscr{C}(k)$ with generator $Q^\theta$.
Then, for the product chain observed on $\{ k \} \cup L_k$ (as defined before Theorem~\ref{partial_meeting}), we have
\[ t_{\rm meet}^\pi (\{ k \} \cup L_k) \leq \frac{3+\de(k)^\theta}{2} . \]
\end{lemma}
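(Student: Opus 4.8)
The plan is to make the partially observed product chain on $A:=\{k\}\cup L_k$ completely explicit and then run a first–step analysis on it. Write $d:=\de(k)$, $m:=|L_k|\ge 1$ and $x:=d^{\theta-1}$; since on the component $\pi(k)\propto d^{1-\theta}$ and $\pi(\ell)\propto 1$ for $\ell\in L_k$, the renormalised measure on $A$ is $\pi_A(k)=(1+mx)^{-1}$, $\pi_A(\ell)=x(1+mx)^{-1}$. First I would describe the single–walker chain observed on $A$: a leaf is adjacent only to $k$, so an observed walker at a leaf moves to $k$ at rate $1$; a walker at $k$ moves to a uniformly chosen leaf at total rate $mx$ (any excursion into the rest of the component leaves from $k$, costs zero observed time, and returns to $k$). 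So the observed single–walker chain is a star, and by symmetry $A\times A$ splits into four configuration types: $C$ (both at $k$), $S$ (both at one leaf), $D$ (at two distinct leaves), $M$ (one at $k$, one at a leaf); the meeting set is $C\cup S$.

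Next I would write down the transitions of the observed \emph{product} chain. The only subtlety is the time change $U(t)=\int_0^t\1_{A\times A}(X_s,Y_s)\,{\rm d}s$: the clock stops whenever a walker leaves $A$, which can happen only from $k$, and when it restarts one of the walkers has just re-entered $A$ \emph{at $k$}, so the chain resumes in $C$ (a meeting) or in $M$ — never in $D$ or $S$. From a $D$–configuration no excursion is possible, so $D\to M$ at rate $2$ and $\mathbb{E}_D(\tau_{\rm meet})=\tfrac12+\mathbb{E}_M(\tau_{\rm meet})$. From an $M$–configuration $(k,\ell)$ the walker at $k$ moves to $\ell$ at rate $x$ (meeting), to another leaf at rate $(m-1)x$ (to $D$), or out at rate $(d-m)x$ (resuming in $C\cup M$), while the leaf walker moves to $k$ at rate $1$ (meeting). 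Solving the linear system gives
\[
\mathbb{E}_M(\tau_{\rm meet})=\frac{2+(m-1)x}{2\bigl(1+x+(d-m)x\,p_0\bigr)}\le\frac{2+(m-1)x}{2(1+x)},
\]
where $p_0\ge 0$ is the probability that an excursion resumes in $C$; the point is that the time change only \emph{enlarges} the denominator, hence only helps. Consequently $\mathbb{E}_D(\tau_{\rm meet})\le\frac{3+mx}{2(1+x)}$.

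Finally I would average over the stationary start. A start in $C\cup S$ contributes $0$, and $\mathbb{P}_{\pi_A\otimes\pi_A}(M)=\frac{2mx}{(1+mx)^2}$, $\mathbb{P}_{\pi_A\otimes\pi_A}(D)=\frac{m(m-1)x^2}{(1+mx)^2}$, so
\[
t^\pi_{\rm meet}(A)\le\frac{mx}{2(1+mx)^2(1+x)}\bigl(4+5(m-1)x+m(m-1)x^2\bigr)\le\frac{u(4+u)}{2(1+u)},
\]
with $u:=mx$, after bounding $(m-1)x\le u$, $m(m-1)x^2\le u^2$, $1+x\ge 1$ and factoring $4+5u+u^2=(1+u)(4+u)$. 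A one–line check gives $\frac{u(4+u)}{2(1+u)}\le\frac{3+u}{2}$ (equivalently $0\le 3$), and since $u=m\,d^{\theta-1}\le d\cdot d^{\theta-1}=d^\theta$, the right–hand side is at most $\frac{3+\de(k)^\theta}{2}$, as claimed.

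The main obstacle is the middle step: handling the time change in the observed product chain. The naive guess that this chain is a product of two independent observed star chains is false (a frozen walker can ``teleport'' during the other's excursion), so one must argue the extra transitions are harmless — and they are, precisely because excursions begin and end only at configurations involving $k$, so they add mass only to ``meeting or $M$'', never to $D$ or $S$, and therefore can only decrease $\mathbb{E}_M(\tau_{\rm meet})$.
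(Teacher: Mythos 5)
Your proof is correct and shares the paper's key insight: excursions out of $A=\{k\}\cup L_k$ can only originate at $k$, cost zero observed time, and always resume with a walker at $k$, so they can only help. The two arguments differ in their abstraction and target: the paper tracks only $S_t\in\{0,1,2\}$, the number of walkers currently in $L_k$, bounds $\tau_{\rm meet}$ above by the hitting time $T_0$ of $\{S_t=0\}$ (both walkers at $k$), and so obtains the \emph{worst-case} bound $\max_{v,w\in A}\E_{(v,w)}(\tau_{\rm meet})\leq\frac{3+\de(k)^\theta}{2}$ in a two-line first-step calculation, from which the stationary average follows a fortiori. Your four-type decomposition $\{C,S,D,M\}$ treats $\tau_{\rm meet}$ directly, carries the excursion correction $p_0\geq 0$ explicitly, and then averages against $\pi_A\otimes\pi_A$, closing with the algebraic check $\frac{u(4+u)}{2(1+u)}\leq\frac{3+u}{2}$. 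The paper's route buys brevity and a uniform bound over starting states essentially for free; yours buys a more transparent account of why the time change is harmless (the resumption can only land in $C$ or $M$, never $D$ or $S$) and of exactly where the factor $\de(k)^\theta$ enters, at the cost of a slightly longer calculation. Both are valid.
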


\begin{proof}
Let $S_t $ count how many of the two walkers are currently in the leaf set $L_k$. Then $(S_t)_{t \geq 0}$ is  a Markov chain on $\{0,1,2\}$ with transition rates $(s_{i j})$, where in particular
\[ s_{2 1}=2,\quad  s_{10} \geq 1 ,\quad s_{1 2} = |L_k| \de(k)^{\theta-1}\leq  \de(k)^{\theta}, \]
using that $|L_k| \leq \de(k)$.

Now, note that   $S_t = 0$ implies that  $\tau_{\rm meet} \leq t$. In particular, if $T_i = \inf\{ t \geq 0\, : \, S_t = i\}$ for $i \in \{0,1,2\}$, then we have that $T_0 \geq \tau_{\rm meet}$. 

From the explicit transition rates, we can see that $\mathbb{E}_2(T_1)=\frac{1}{2}$ and if we write $s(1) = s_{10} + s_{12}$, then
\[ \mathbb{E}_1(T_0)=\frac{1}{s(1)}+\frac{s_{12}}{s(1)}\left( \frac{1}{2}+\mathbb{E}_1(T_0) \right) \, \]
so that 
\[ \mathbb{E}_1(T_0)=\frac{1+{s_{12}}/{2}}{s_{10}}\leq 1+\frac{s_{12}}{2} . 
\]
We conclude that 
\[
\sup_{v,w \in V(\cS_k)} \E_{(v,w)} (\tau_{\rm meet})
\leq \max\{ \E_1(T_0) , \E_2(T_0) \} 
\leq \frac{1}{2}+1+\frac{s_{12}}{2}\leq \frac{3+\de(k)^\theta}{2}, 
\]
as claimed.
\end{proof}

\begin{proof}[Proof of Theorem \ref{class_subcrit}]	 We will start by showing the {\bf upper bounds}.
For the cases $\theta \geq 1$, we use that by Lemma~\ref{binary_lower_bound} and Proposition~\ref{prop:coal} we can bound 
\[\E_{\mu_u} ( \tau_{{\rm cons}} \, |\, G_{\beta,\gamma}) \leq e(2+\log N)t_{\rm hit}(G_N),\] 
where $t_{\rm hit}(G_N)  = \sup_{j \in [k]} t_{\rm hit}(\mathscr{C}_i)$ for $\mathscr{C}_1, \ldots, \mathscr{C}_k$ the components of $G_N$. Note in particular that the right hand side is still random and the expectation is only over the random walks.

We recall that the random walk associated to the classical voter model has transition rates
$Q^\theta(x,y) = \de(x)^{\theta-1}\1_{x\sim y}$ and $\pi(x) \propto \de(x)^{1-\theta}$. In particular, for any component $\sC \in {\rm Comp}(G_N)$ the conductances as defined in~\eqref{conductance_definition} are 
\[ c(xy) = \pi(x)  Q^{\theta}(x,y) = \frac{1}{\sum_{z \in \sC} \de(z)^{1- \theta}} \1_{\{ x \sim y \}} , \quad
\mbox{for any } x,y \in \sC. \]

Hence, by Proposition~\ref{prop:max_resistance}, we have for any component $\sC$,
\begin{equation}\label{eq:upper_hit} t_{{\rm hit}}(\sC) \leq \diam (\sC) \sum_{z \in \sC} \de(z)^{1- \theta} . \end{equation}

Because $\theta \geq 1$, we have that 
$\sum_{z \in \sC} \de(z)^{1-\theta} \leq |\sC|$. Therefore, by Proposition~\ref{prop:big_sum_of_degrees} and Proposition~\ref{prop:diameter}, we get that
\[ 	\sup_{\sC \in \comps} t_{{\rm hit}}(\sC)
\leq 
\max_{\sC \in \comps}\diam (\sC)
\times
\max_{\sC \in \comps}|\sC| 
=
O_{\p}^{\log N}(N^\gamma) 
, \]
which completes the upper bound for $\theta \geq 1$.

For $\theta \leq 0$, we first deal with the small components, where we  recall that the vertex set of the `small' components is defined as 
\[
V_{\rm small}:=[N]\setminus V_{\rm big},
\qquad
\mbox{where }
V_{\rm big}:= \bigcup_{k \leq K_\gamma} V \left( \mathscr{C}(k) \right)
\]
and $K_\gamma = N^\frac{1-2\gamma}{2-2\gamma}\log N$.
By Proposition~\ref{prop:small_sum_of_degrees}, we know that
\[
\max_{k \in V_{\rm small}}
\sum_{x \in \mathscr{C}(k)} \operatorname{d}(x)
=O_{\mathbb{P}}^{\log N}\left(N^{\frac{\gamma}{2-2\gamma}}\right)
\]
In particular, we get from~\eqref{eq:upper_hit} using $\sum_{i} x_i^p \leq (\sum_i x_i)^p$ for any $p\geq1$ and $x_i\geq 0$ that
\[\begin{aligned}
\max_{ k \in V_{\rm small}} t_{\rm hit}(\mathscr{C}(k)) 
& \leq \diam (G_N) \max_{ k \in V_{\rm small}} \sum_{x \in \sC(k)} \de(x)^{1-\theta}\\
 &  \leq \diam (G_N)
\max_{ k \in V_{\rm small}} \Big(\sum_{x \in \sC(k)} \de(x)\Big)^{1- \theta} 
=O_{\mathbb{P}}^{\log N}\left(N^{\frac{\gamma(1-\theta)}{2-2\gamma}}\right), 
\end{aligned} \]
where we also used Proposition~\ref{prop:diameter} to bound the diameter.

To bound the consensus time on large components, we use that by Proposition~\ref{tree_meeting_theorem}
for any $k \leq K_\gamma$,
\[ t_{\rm meet}(\sC(k)) \leq 189 \, \frac{t_{\rm hit}(k)}{\pi(k)} , \]
and find a suitable upper bound on the right hand side, which in turn gives us by Lemma~\ref{binary_lower_bound} and  Proposition~\ref{prop:coal} an upper bound on $\E_{\mu_u} ( \tau_{\rm cons}(\cC(k)) \, |\, G_N)$. 
In order to bound the invariant measure, we note that since $\theta \leq 0$, we have from Proposition \ref{prop:big_sum_of_degrees} and~\ref{prop:star_degrees} that 
\[ \min_{k \leq K_\gamma}\pi(k)  = \min_{k \leq K_\gamma} \frac{\de(k)^{1-\theta}}{\sum_{z \in \sC(k)} \de(v)^{1-\theta} } 
\geq \min_{k \leq K_\gamma} \Big( \frac{\de(k) }{ \sum_{z \in \sC(k)} \de(z) }\Big)^{1-\theta}= \Omega_{\p}^{\log N}(1) . \]

In order to bound the hitting time $t_{\rm hit}(k)$ we apply the same argument as for the small component, but 
for the random walk restricted to each  branch of $\sC(k)$ (see also Definition~\ref{branch_defn} above for the formal definition of a branch).
The bound on the sum of degrees comes from Lemma~\ref{le:subcritical_branch_control}.
Together, we obtain that
\[ \sup_{k \leq K_\gamma}  t_{\rm meet}(\sC(k)) = O_{\mathbb{P}}^{\log N}\Big(N^{\frac{\gamma(1-\theta)}{2-2\gamma}}\Big) . \]
Combined with the bound on the small components, this completes the upper bound in the case $\theta \leq 0$.

We complete the upper bounds by showing for $\theta \in (0,1)$ that
\begin{equation}\label{eq:mid_upper_bd}
\max_{k \in [N]} t_{\text{\textnormal{meet}}}\left( \mathscr{C}(k) \right)=O_{\mathbb{P}}^{\log N}\left( N^{\gamma\theta} + N^\frac{\gamma}{2-2\gamma} \right).
\end{equation}
The upper bound on the consensus time then follows by Proposition~\ref{prop:coal} and by noting that in each of the two different regimes 
one of the summands dominates.

For $k \in V_{\rm small}$, we use similar strategy as above and obtain by~\eqref{eq:upper_hit} that
\[ t_{\rm hit}(\mathscr{C}(k) ) \leq \diam(\mathscr{C}(k)) \sum_{z \in \mathscr{C}(k)} d(z)^{1-\theta} 
\leq \diam(\mathscr{C}(k)) \sum_{z \in \mathscr{C}(k)} d(z),   \]
which if we combine Proposition~\ref{prop:small_sum_of_degrees} and Proposition~\ref{prop:diameter} is seen to be
$O_{\mathbb{P}}^{\log N} \left(N^{\frac{\gamma}{2 -2 \gamma}}\right)$ uniformly in $k \in V_{\rm small}$.

For the bound on the large components, define for $k \leq K_\gamma$ the set $L_k$
as the neighbours of $k$ that have degree $1$. By Proposition~\ref{prop:leaf_counts} we have that 
 \begin{equation}\label{eq:number_star}
\min_{k \leq K_\gamma} \frac{|L_k|}{\operatorname{d}(k)}=\Omega_{\mathbb{P}}(1).
\end{equation}
hence, since all vertices in $L_k$ have degree $1$, we obtain
\[ \pi(L_k \cup \{ k\}) \geq \frac{\sum_{x \in L_k} \de(x)^{1-\theta}}{\sum_{x \in \mathscr{C}(k)} \de(x)^{1-\theta}}
\geq \frac{|L_k|}{\sum_{x \in \mathscr{C}(k)} \de(x) }
. \]

Thus by~\eqref{eq:number_star} and Proposition \ref{prop:big_sum_of_degrees} we have
\begin{equation}\label{eq:low_pi}
\min_{k \leq K_\gamma} \pi(L_k \cup\{k\}) 
=
\Omega \left( \min_{k \leq K_\gamma } \frac{\de(k)}{\sum_{x \in \mathscr{C}(k)} \de(x) } \right)
=
\Omega_{\mathbb{P}}\left( \frac{1}{\log N} \right).
\end{equation}

Because we have a large stationary mass in $L_k \cup \{k\}$, Theorem~\ref{partial_meeting} gives us that
\begin{equation}\label{eq:partial}
t_{\text{\textnormal{meet}}}(\mathscr{C}(k))
=O_{\mathbb{P}}^{\log N}\left(
t_{\rm meet}^\pi(L_k \cup \{k\})
+
 t_{\rm hit}
\left( k \right)
\right), 
\end{equation}
where we recall  that $t_{\rm meet}^\pi(L_k \cup \{k\})$ is the meeting time for the Markov chain observed on $\{k\} \cup L_k$ (see also the definition just before Theorem~\ref{partial_meeting}). We obtain from by Lemma \ref{le:meeting_star} that
\[
\max_{k \leq K_\gamma} t^\pi_{\rm meet}\left( \{k\} \cup L_k \right) \leq
\max_{k \leq K_\gamma} \frac{3+\de(k)^\theta}{2}
=O_{\mathbb{P}}(N^{\gamma \theta}) .
\]
Moreover, by Lemma \ref{le:subcritical_branch_control}
\[\begin{aligned}
\max_{k \leq K_\gamma}t_{\text{hit}}(k)
& \leq
\max_{k \leq K_\gamma} \max_{B \in \cB(\sC(k))} \operatorname{diam}(B) \sum_{v \in B} \de(v)^{1-\theta}\\
& \leq \max_{k \leq K_\gamma}
\operatorname{diam}(G_N) \max_{B \in \cB(\sC(k))} \sum_{v \in B} \de(v)
=O_{\mathbb{P}}^{\log N}\left( N^{\frac{\gamma}{2-2\gamma}} \right).
\end{aligned} 
\]
Substituting both bounds into~\eqref{eq:partial}, we obtain 
\[ \max_{k \leq K_\gamma} t_{\rm meet}(\mathscr{C}(k) )
= O_{\mathbb{P}}^{\log N} \left(N^{\gamma \theta}+N^{\frac{\gamma}{2-2\gamma}} \right). \]
By combining this with the bound on the small components, we have completed the proofs for the upper bounds in all cases.

We continue with the {\bf lower bounds}.

For the first part, we suppose that $\theta >0$ and consider the consensus time on $\sC(1)$. By Lemma~\ref{binary_lower_bound} and Proposition \ref{prop:coal}
\[ \mathbb{E}^{\theta}_{\mu_u}(\tau_{\text{\textnormal{cons}}}(\mathscr{C}(1)) \, |\, G_N)
\geq 2u(1-u)t_{\text{meet}}(\mathscr{C}(1))
\geq 2u(1-u)t_{\text{meet}}^\pi(\mathscr{C}(1)), \]
where the last inequality follows from the definitions. To bound the right hand side, we recall from Proposition~\ref{lower_meeting_bound} 
that 
\begin{equation}\label{eq:L2} t_{\text{meet}}^\pi(\mathscr{C}(1)) \geq \frac{( 1- \sum_{x \in \mathscr{C}(1)} \pi(x)^2)^2}{4
\sum_{x \in \mathscr{C}(1)} q(x) \pi(x)^2 } . \end{equation}
In order to find a lower bound on the right hand side, we first bound the maximum of the invariant distribution. 
If $\theta \in (0,1)$, then we have by Proposition~\ref{prop:star_degrees}
\[\max_{v \in \mathscr{C}(1)} \pi(v) 
 = \frac{\max_{v \in \mathscr{C}(1)} \de(v)^{1 - \theta}}{ \sum_{z \in \mathscr{C}(1)} 
\de(z)^{1-\theta}} \leq \frac{ |\mathscr{C}(1)|^{1 - \theta}}{ |\mathscr{C}(1)|} \leq \de(1)^{-\theta}
= O_{\mathbb{P}} (N^{-\gamma \theta} ) . \]
Similarly, if $\theta \geq 1$ we recall the leaf neighbours of Proposition~\ref{prop:leaf_counts}, 
\[ \max_{v \in \mathscr{C}(1)} \pi(v) 
\leq \frac{1}{ \sum_{z \in \mathscr{C}(1)} 
\de(z)^{1-\theta}}
\leq \frac{1}{ |L_1| }
= O_{\mathbb{P}} (N^{-\gamma})  . \]

In particular, in both cases we have
\[ \sum_{x \in \mathscr{C}(1)}\pi(x)^2 \leq \max_{x \in \mathscr{C}(1)} \pi(x) \sum_{v \in \mathscr{C}(1)} \pi(v) = o_{\mathbb{P}}(1). \]

To estimate the denominator in~\eqref{eq:L2}, we note that 
for $\theta \geq 1$,
\[\begin{aligned} \sum_{v \in \mathscr{C}(1)}  q(v) \pi(v)^2 & =\frac{\sum_{v \in \mathscr{C}(1)}\operatorname{d}(v)^{\theta}\operatorname{d}(v)^{2-2\theta}}{\left( \sum_{v \in \mathscr{C}(1)}\operatorname{d}(v)^{1-\theta}  \right)^2}
\leq \frac{\sum_{v \in \mathscr{C}(1)}\operatorname{d}(v) }{|L_1|^2}\\
& = O_{\mathbb{P}}^{\log N} \Big( \frac{N^\gamma}{N^{2 \gamma} } \Big) 
= O_{\mathbb{P}}^{\log N} (N^{-\gamma}), \end{aligned} 
\]
where we used Proposition~\ref{prop:leaf_counts} for the denominator and Proposition~\ref{prop:big_sum_of_degrees} for the numerator.
By the same results and Lemma \ref{le:empirical_moment}, we have for
 $\theta \in (0,1)$, 
\[  \sum_{v \in \mathscr{C}(1)}  q(v) \pi(v)^2 \leq \frac{ \sum_{v \in \mathscr{C}(1)}\operatorname{d}(v)^{2- \theta} }{\big( \sum_{v \in \mathscr{C}(1)}\operatorname{d}(v)^{1-\theta}  \big)^2} = O^{\log N}_{\mathbb{P}} \left( \frac{N^{(2-\theta)\gamma}}{N^{2 \gamma} } \right) 
= O^{\log N}_{\mathbb{P}} ( N^{- \theta \gamma}) . 
\] 
Hence, we obtain from~\eqref{eq:L2}  for $\theta > 0$
\begin{equation}\label{eq:low_bd_1} t_{\text{meet}}^\pi(\mathscr{C}(1)) = \left\{ \begin{array}{ll} \Omega^{\log N}_{\mathbb{P}} ( N^\gamma) & \mbox{if } \theta \geq 1 ,\\ \Omega^{\log N}_{\mathbb{P}} (N^{\gamma \theta}) &\mbox{if } \theta \in (0,1). \end{array} \right.  \end{equation}

For the second of the part of the lower bound, we use a component that contains a sufficiently large ``double star'' structure and consider parameters $\theta<1$. More precisely, 
by Proposition~\ref{prop:existence_simple_double_star}, with high probability, there exists a tree component that contains two adjacent vertices $x$ and $y$ such that
\begin{equation}\label{simple_double_star_properties}
 \de(x),\de(y)
\mbox{ and }\sum_{v \in \sC(x)} \de(v) 
\mbox{ are }
\Theta_{\mathbb{P}}^{\log N}\left( N^\frac{\gamma}{2-2\gamma} \right).
\end{equation}

Now, let $A_x$ be the set of vertices in $\sC(x)$ that are closer to $x$ than to $y$, and $A_y$ the complement.
Then, we will use that by Proposition~\ref{conductance_theorem}
\begin{equation}\label{eq:cond_bound} \E_{\mu_u}(\tau_{\rm cons} \, |\, G_N) = \Omega \left( \frac{\pi(A_x)\pi(A_y)}{ \sum_{v \in A_x} \sum_{w \in A_y} c(vw) }  \right) . \end{equation}

We start by estimating the term $\pi(A_x)\pi(A_y)$.
Note that for $\theta \in (0,1)$, we have that
\[ \de(x) \leq |A_x| \leq \sum_{v \in A_x} \de(v)^{1-\theta} 
\leq \sum_{v \in  \mathscr{C}(x) } \de(v)^{1-\theta} \leq 
\sum_{v \in \mathscr{C}(x)} \de (v)  , \]
and the same bounds hold when replacing $x$ by $y$. 
Therefore, for $\theta \in (0,1)$, we obtain
\[
\frac{\de(x)}{\sum_{v \in \sC(x)} \de (v)}
\leq
\frac{\sum_{v \in A_x} \de (v)^{1-\theta}}{\sum_{v \in A_y} \de (v)^{1-\theta}}
\leq
\frac{\sum_{v \in \sC(x)} \de (v)}{\de(y)}, 
\]
so that we can deduce from~\eqref{simple_double_star_properties} that 
\[ \pi(A_x) \pi(A_y)
=\left(
\sqrt{\frac{\sum_{v \in A_x} \de (v)^{1-\theta}}{\sum_{v \in A_y} \de (v)^{1-\theta}}}+\sqrt{\frac{\sum_{v \in A_y} \de (v)^{1-\theta}}{\sum_{v \in A_x} \de (v)^{1-\theta}}}
\right)^{-2}
= \Om^{\log N}_{\mathbb{P}} (1) . \]
Furthermore, since $\sC(x)$ is a tree the denominator in~\eqref{eq:cond_bound} reduces to
\[ 
c(xy) = \frac{1}{\sum_{v \in \sC(x)} \de(v)^{1-\theta} }
\leq
\frac{1}{\left| \sC(x) \right|}=
O_{\mathbb{P}}^{\log N} \left( N^{- \frac{\gamma}{2-2\gamma}} \right)
.
\]

We finally consider the case $\theta \leq 0$ on this double star. 
Again, we start by estimating $\pi(A_x) \pi(A_y)$. By Proposition \ref{prop:existence_simple_double_star} we have 
\[
\sum_{v \in A_x}\de(v)^{1-\theta}
=O_{\mathbb{P}}\Big( N^\frac{\gamma(1-\theta)}{2-2\gamma}\Big).
\]
Since further $\de(x)=\Theta_{\mathbb{P}}(N^\frac{\gamma}{2-2\gamma})$ by~\eqref{simple_double_star_properties}, 
we also have that
\[ \sum_{v \in A_x}\de(v)^{1-\theta}
 \geq \de(x)^{1-\theta} = \Omega_{\mathbb{P}}\Big( N^\frac{\gamma(1-\theta)}{2-2\gamma}\Big).
\] 
The same bounds hold for $y$ and so by the same argument as above, 
we have that $\pi(A_x)\pi(A_y)=\Omega_{\mathbb{P}}(1)$. Moreover,
\[
c(xy)=\frac{1}{\sum_{v \in \sC(x)} \de(v)^{1-\theta} }
\leq \frac{1}{\de(x)^{1-\theta}}
= O_{\mathbb{P}}^{\log N} \Big( N^\frac{-\gamma(1-\theta)}{2-2\gamma} \Big).
\]
Combining the estimates on the stationary distribution and the conductance $c(xy)$, 
we conclude from \eqref{eq:cond_bound} that
 \begin{equation}\label{eq:low_bd_2}
 \E_{\mu_u}(\tau_{\rm cons} \, |\, G_N) = \left\{ \begin{array}{ll} \Om_{\mathbb{P}}^{\log N} ( N^\frac{\gamma}{2-2\gamma} ) 
 & \mbox{if } \theta \in (0,1), \\
 \Om_{\mathbb{P}}^{\log N} ( N^\frac{\gamma(1-\theta)}{2-2\gamma} ) 
 & \mbox{if } \theta \in (-\infty,1]. \end{array} \right.
 \end{equation}

Combining Equations~\eqref{eq:low_bd_2} and~\eqref{eq:low_bd_1} completes the proof of Theorem~\ref{class_subcrit} by giving all the required lower bounds.
 \end{proof}

\subsection{Consensus time for the discursive voter model}\label{ssec:proof_discursive}

In this section, we will consider the discursive voter model as defined in Definition~\ref{def:voter}(b). This version of the voter model fits into the general setting of a $Q$-voter model of Section~\ref{sec:duality} if for $\theta \in \mathbb{R}$ we consider $Q = \mathbf{Q}^\theta$ defined as
\begin{equation}\label{eq:Q_theta_discursive} \mathbf{Q}^\theta(i,j) = 
\frac{\operatorname{d}(i)^{\theta-1}+\operatorname{d}(j)^{\theta-1}}{2}
 \quad \mbox{if } i \sim j \mbox{ in } G_N. \end{equation}
As before, we write $\mathbf{P}^\theta$ for the law of (and $\mathbf{E}^\theta$ for the expectation with respect to)  the coalescing random walks with generator $\mathbf{Q}^\theta$.

If we denote by $\mathscr{C}_1, \ldots, \mathscr{C}_k$ the connected components of $G_N$, then define $\pi = (\pi(z) , z \in V(G_N))$ via
\[ \pi(z) = \frac{1}{|\mathscr{C}_j|} , \quad \mbox{ for } z \in \mathscr{C}_j, \]
for $j \in [k]$.
Then $\pi|_{\mathscr{C}_j}$, i.e.\ the uniform measure on $\mathscr{C}_j$, is the invariant measure of the $\mathbf{Q}^\theta$ Markov chain restricted to $\mathscr{C}_j$.
 
First we require another application of Theorem~\ref{partial_meeting}, which is simpler than for the classical voter model,  but covers a wider range of cases.
 
\begin{lemma}\label{meeting_obl_subcrit} For $G_N \in \mathcal{G}_{\beta,\gamma}$ with $\beta + 2\gamma <1$, we have that
\[
t_{\text{\textnormal{meet}}}(G_N)  = \sup_{j \in [k]} t_{\rm meet}(C_i) =
\begin{cases}
O^{\log N}_{\mathbb{P}}\left( N^\frac{\gamma}{2-2\gamma} \right) &  \theta > \frac{3-4\gamma}{2-2\gamma} \\
O^{\log N}_{\mathbb{P}}\left( N^{\gamma(2-\theta)} \right) &  1 < \theta \leq \frac{3-4\gamma}{2-2\gamma} \\
O^{\log N}_{\mathbb{P}}\left( N^\gamma \right) & 2\gamma \leq \theta \leq 1\\
O^{\log N}_{\mathbb{P}}\left( N^\frac{\gamma(2-\theta)}{2-2\gamma} \right) & \theta < 2\gamma
\end{cases}
\]
\end{lemma}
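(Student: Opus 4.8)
The plan is to bound $t_{\rm meet}(\mathscr C)$ separately over the \emph{big} components $\mathscr C=\mathscr C(k)$ with $k\le K_\gamma$ and over the \emph{small} components, and then in each of the four ranges of $\theta$ to read off which of the two bounds dominates; throughout we use that for $\mathbf{Q}^\theta$ the invariant measure $\pi$ is uniform on each component, so that for neighbours $x,y$ in a component $\mathscr C$ the conductance is $c(xy)=\frac{1}{|\mathscr C|}\cdot\frac{\operatorname{d}(x)^{\theta-1}+\operatorname{d}(y)^{\theta-1}}{2}$. For small components I would simply combine $t_{\rm meet}(\mathscr C)\le t_{\rm hit}(\mathscr C)$ (Proposition~\ref{prop:coal}) with the conductance bound $t_{\rm hit}(\mathscr C)\le\operatorname{diam}(\mathscr C)\cdot\max_{x\sim y}c(xy)^{-1}$ (Proposition~\ref{prop:max_resistance}). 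Since $c(xy)^{-1}=|\mathscr C|\cdot 2/(\operatorname{d}(x)^{\theta-1}+\operatorname{d}(y)^{\theta-1})$, and since $\operatorname{diam}(\mathscr C)=O_{\mathbb P}(\log N)$ (Proposition~\ref{prop:diameter}), $|\mathscr C|\le\sum_{v\in\mathscr C}\operatorname{d}(v)=O^{\log N}_{\mathbb P}(N^{\gamma/(2-2\gamma)})$ (Proposition~\ref{prop:small_sum_of_degrees}) and $\max_{k\notin V_{\rm big}}\operatorname{d}(k)=O_{\mathbb P}(N^{\gamma/(2-2\gamma)})$ (Proposition~\ref{prop:small_degrees_arent_big}), one gets $t_{\rm hit}(\mathscr C)=O^{\log N}_{\mathbb P}(N^{\gamma/(2-2\gamma)})$ when $\theta\ge1$ (each degree power is $\ge1$) and $t_{\rm hit}(\mathscr C)=O^{\log N}_{\mathbb P}(N^{\gamma(2-\theta)/(2-2\gamma)})$ when $\theta<1$ (bounding the denominator below by $(\max_{v\in\mathscr C}\operatorname{d}(v))^{\theta-1}$); a short check shows these are below the claimed bound in every regime.

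For a big component $\mathscr C(k)$ I would apply Theorem~\ref{partial_meeting} with $s=k$ and $A=A_k:=\{k\}\cup L_k$, where $L_k$ is the set of degree-$1$ neighbours of $k$. By Propositions~\ref{prop:star_degrees}, \ref{prop:leaf_counts} and~\ref{prop:big_sum_of_degrees} we have $|L_k|=\Omega_{\mathbb P}((N/k)^\gamma)$ while, since big components are trees, $|\mathscr C(k)|=O^{\log N}_{\mathbb P}((N/k)^\gamma)$; hence $\pi(A_k)\ge(\log N)^{-K}$ with high probability uniformly in $k\le K_\gamma$, so the last two terms of Theorem~\ref{partial_meeting} are $O^{\log N}_{\mathbb P}$ of the first two quantities, and the task reduces to bounding $t_{\rm hit}(k)$ and $t^\pi_{\rm meet}(A_k)$. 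For $t_{\rm hit}(k)=\max_x\mathbb E_x(T_k)$ I would use the tree identity $\mathbb E_x(T_k)=\sum_{i=1}^m\pi(T_i)/c(x_{i-1}x_i)$ along the geodesic $x=x_0,x_1,\dots,x_m=k$ (which follows from the strong Markov property together with the single-edge commute formula inside Proposition~\ref{prop:max_resistance}), where $T_i$ is the subtree lying on the $x_{i-1}$-side of $\{x_{i-1},x_i\}$. Every $T_i$ with $i<m$ lies inside one branch of $\mathscr C(k)$, and Lemma~\ref{le:subcritical_branch_control} bounds the number of vertices — hence also all degrees occurring along these edges — in that branch by $O^{\log N}_{\mathbb P}(N^{\gamma/(2-2\gamma)})$, while $m\le\operatorname{diam}(G_N)=O_{\mathbb P}(\log N)$; the last edge $\{x_{m-1},k\}$ contributes at most $2|B|/(\operatorname{d}(x_{m-1})^{\theta-1}+\operatorname{d}(k)^{\theta-1})$. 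Summing, and distinguishing $\theta\ge1$ from $\theta<1$ exactly as for the small components, gives $t_{\rm hit}(k)=O^{\log N}_{\mathbb P}(N^{\gamma/(2-2\gamma)})$ for $\theta\ge1$ and $O^{\log N}_{\mathbb P}(N^{\gamma(2-\theta)/(2-2\gamma)})$ for $\theta<1$, uniformly in $k$.

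It remains to control $t^\pi_{\rm meet}(A_k)$, the meeting time of the product chain observed on $A_k\times A_k$. Because $\pi$ is uniform and the vertices of $L_k$ have $k$ as their only neighbour, the single-walker chain observed on $A_k$ is a star centred at $k$: from a leaf the walker jumps to $k$ at rate $\tfrac12(1+\operatorname{d}(k)^{\theta-1})=\Theta(1\vee\operatorname{d}(k)^{\theta-1})$, while the observed holding time at $k$ equals the real time spent there before a jump into $L_k$, of mean $\Theta(1/q(k))$ with $q(k)=\Theta(\operatorname{d}(k)^\theta\vee\operatorname{d}(k))$ (a jump from $k$ landing in $L_k$ with probability $\Theta(1)$, by Proposition~\ref{prop:leaf_counts}). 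A direct analysis of two such walkers — meeting happens essentially only when both sit at $k$, which requires the other walker to return from its leaf during a visit of $X$ to $k$, an event of probability $\Theta(1/\operatorname{d}(k))$ per visit, with consecutive visits separated by $\Theta(1\vee\operatorname{d}(k)^{1-\theta})$ — yields $t^\pi_{\rm meet}(A_k)=O^{\log N}_{\mathbb P}((N/k)^\gamma)$ for $\theta\le1$ and $O^{\log N}_{\mathbb P}((N/k)^{\gamma(2-\theta)})$ for $\theta\ge1$.

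Maximising over $k\le K_\gamma$ (the maximum being at $k=1$, i.e.\ $N^\gamma$ resp.\ $N^{\gamma(2-\theta)}$), combining $t_{\rm hit}(k)$ and $t^\pi_{\rm meet}(A_k)$ through Theorem~\ref{partial_meeting}, and comparing with the small-component estimate then produces exactly the four cases in the statement: $\theta=\tfrac{3-4\gamma}{2-2\gamma}$ is precisely where $N^{\gamma(2-\theta)}$ crosses $N^{\gamma/(2-2\gamma)}$, and $\theta=2\gamma$ is where $N^{\gamma(2-\theta)/(2-2\gamma)}$ crosses $N^\gamma$. I expect the main obstacle to be the clean and uniform control of $t^\pi_{\rm meet}(A_k)$: one must set up the observed product chain carefully — observed time running only while \emph{both} walkers lie in $A_k$ — and push the star-meeting estimate through uniformly over all big components, together with the bookkeeping that keeps the $t_{\rm hit}(k)$ bound (and the lower bound $\pi(A_k)=\Omega^{\log N}_{\mathbb P}(1)$) uniform in $k$.
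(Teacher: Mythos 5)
Your overall architecture matches the paper's: bound $t_{\rm hit}$ on small components and on branches of big components via the conductance/path bound (Proposition~\ref{prop:max_resistance}), then on big components $\sC(k)$ apply Theorem~\ref{partial_meeting} with $s=k$, using Propositions~\ref{prop:star_degrees}, \ref{prop:leaf_counts} and~\ref{prop:big_sum_of_degrees} to make $\pi(A)$ uniformly polylogarithmic, and finally maximize over $k\le K_\gamma$ to read off the four regimes. The hitting-time estimates and the check that the small-component contribution never dominates are correct.

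The place where you diverge, and where your argument has a real gap, is the choice of observation set and the estimate of $t^\pi_{\rm meet}$. You take $A_k=\{k\}\cup L_k$ and then assert that for the observed product chain \emph{meeting happens essentially only when both walkers sit at $k$}, which you justify by a rate heuristic. This is precisely the point you yourself flag as the main obstacle, and it is not straightforward: meeting can also occur when both walkers land on the same leaf, the observed clock runs only while \emph{both} walkers are in $A_k\times A_k$ (so the holding time at $k$ for one walker depends on where the other walker is), and the total exit rate $q(k)$ is not a priori $\Theta(\de(k)^\theta\vee\de(k))$ without an extra argument bounding the contribution of non-leaf neighbours of $k$ of possibly large degree. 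Making this two-dimensional star analysis rigorous and uniform in $k$ is genuine work that you have not done.

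The paper sidesteps all of this by taking $A=L_k$ \emph{without} $k$. Observed on $L_k$, a single walker simply waits an $\mathrm{Exp}\big(\tfrac{1+\de(k)^{\theta-1}}{2}\big)$ time at its current leaf and then (by exchangeability of the leaves, since $\pi$ is uniform and the leaves are interchangeable by symmetry) jumps to a uniformly chosen leaf of $L_k$. Two such walkers therefore coalesce exactly as in a complete-graph (Wright--Fisher) model at rate $\tfrac{1+\de(k)^{\theta-1}}{|L_k|}$, giving $t^\pi_{\rm meet}(L_k)\le \frac{|L_k|}{1+\de(k)^{\theta-1}}$ in one line, with none of the subtleties about meeting at $k$, the holding time at $k$, or the product-chain clock. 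You should replace your $A_k=\{k\}\cup L_k$ with $A=L_k$ and invoke exchangeability as above; this is the one substantive change needed to make your proof go through cleanly. (Note that the $\{k\}\cup L_k$ choice is the right one for the \emph{classical} model, cf.\ Lemma~\ref{le:meeting_star}, where $\pi$ is not uniform; it is specifically the uniformity of $\pi$ in the discursive model that makes the leaves-only observation tractable.)
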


\begin{proof}
By Proposition~\ref{le:subcritical_branch_control}, we can
work on the high probability set where all big components are  trees. 
Recall that $ K_\gamma:=N^\frac{1-2\gamma}{2-2\gamma} \log N$
and denote for any $k \leq K_\gamma$
by $L_k$ the set of degree $1$ vertices adjacent to $k$. By Proposition~\ref{prop:leaf_counts} and since the stationary distribution is  uniform
\[
\min_{k \leq K_\gamma} \pi(L_k)= \min_{k \leq K_\gamma} \frac{|L_k|}{|\mathscr{C}(k)|}=\Omega^{\log N}_{\mathbb{P}}(1).
\]

Then by exchangeability, coalescence observed in $L_k$ is just complete graph (Wright-Fisher) coalescence. This is because a simultaneous move by both walkers gives the same probability to coalesce as a single move. Thus, coalescence occurs for the partially observed process at rate
\[
\frac{1+\operatorname{d}(k)^{\theta-1}}{|L_k|}
\]
and we conclude by Proposition~\ref{prop:leaf_counts}
\begin{equation}\label{eq:2703-1}
\max_{k \leq K_\gamma}  t^\pi_{\text{\textnormal{meet}}}( L_k )   
\leq \max_{k \leq K_\gamma} \frac{|L_k|}{1+\operatorname{d}(k)^{\theta-1}}
= 
\begin{cases}
O_{\mathbb{P}}^{\log N}(N^{\gamma(2-\theta)}) & \theta>1, \\
O_{\mathbb{P}}^{\log N}(N^\gamma) & \theta \leq 1 .
\end{cases}
\end{equation}

Now, we let $\mathcal{S}$ be the collection of small components and branches in large components. If we denote by $P_{x,y}$ the set of paths between any vertices $x$ and $y$, then by Proposition~\ref{prop:max_resistance} we obtain
\[
\begin{split}
\max_{S \in \mathcal{S}} t_{\text{hit}}(S) &\leq \max_{S \in\mathcal{S}} \max_{x,y \in V(S)} \min_{P_{x,y}} \sum_{\{u,v\} \in E(P_{x,y})} \frac{2|S|}{\operatorname{d}(u)^{\theta-1}+\operatorname{d}(v)^{\theta-1}}\\
&\leq \max_{S \in\mathcal{S}} |S| \operatorname{diam}(S) \max_{v \in S} \left( \operatorname{d}(v)^{1-\theta} \right)\\
&\leq \max_{S \in\mathcal{S}} |S| \operatorname{diam}(G_{\beta,\gamma}) \max_{v > K_\gamma} \left( \operatorname{d}(v)^{1-\theta} \right)\\
&=
\begin{cases}
 O^{\log N}_{\mathbb{P}}\left(N^{\frac{(2-\theta)\gamma}{2-2\gamma}}\right) & \theta<1 ,\\
 O^{\log N}_{\mathbb{P}}\left(N^{\frac{\gamma}{2-2\gamma}}\right) & \theta \geq 1,
\end{cases}
\end{split}
\]
where we used Lemma~\ref{le:subcritical_branch_control} and Propositions~\ref{prop:small_sum_of_degrees} and \ref{prop:diameter} in the last step.

If we combine this last bound with~\eqref{eq:2703-1} and apply
Theorem~\ref{partial_meeting}, then we obtain
\[
\begin{split}
\max_{k \leq K_\gamma}t_{\text{\textnormal{meet}}}(\mathscr{C}(k))
&=
\begin{cases}
O^{\log N}_{\mathbb{P}}\left( N^{\gamma(2-\theta)}+ N^\frac{\gamma}{2-2\gamma} \right) &  \theta > 1 ,\\
O^{\log N}_{\mathbb{P}}\left( N^\gamma + N^\frac{\gamma(2-\theta)}{2-2\gamma} \right) & \theta \leq 1,
\end{cases}\\
&=
\begin{cases}
O^{\log N}_{\mathbb{P}}\left( N^\frac{\gamma}{2-2\gamma} \right) &  \theta > \frac{3-4\gamma}{2-2\gamma} ,\\
O^{\log N}_{\mathbb{P}}\left( N^{\gamma(2-\theta)} \right) &  1 < \theta \leq \frac{3-4\gamma}{2-2\gamma} ,\\
O^{\log N}_{\mathbb{P}}\left( N^\gamma \right) & 2\gamma \leq \theta \leq 1,\\
O^{\log N}_{\mathbb{P}}\left( N^\frac{\gamma(2-\theta)}{2-2\gamma} \right) & \theta < 2\gamma,
\end{cases}
\end{split}
\]
which completes the proof of the lemma.
\end{proof}

\begin{proof}[Proof of Theorem \ref{obl_subcrit}]

The upper bound for all four cases follows immediately from 
Lemma~\ref{binary_lower_bound}, Propositon \ref{prop:coal} and Lemma~\ref{meeting_obl_subcrit}, so it only remains to prove the lower bounds. For these, it will be very useful that {the stationary distribution $\pi$ on each component  is always uniform.}

The first lower bound is for the case $\theta \geq \frac{3-4\gamma}{2-2\gamma}$ for which we must consider the \emph{long} double star component, whose existence is proved Proposition \ref{prop:existence_long_double_star}. First, we note that the `separating edge' $\{ v_2,v_3\}$ on the
the long double star has conductance
\[
c(v_2v_3)=O_{\mathbb{P}}^{\log N} \left( N^{-\frac{\gamma}{2-2\gamma}} \right). 
\]
Moreover, by Propositions \ref{prop:big_sum_of_degrees} and \ref{prop:star_degrees}
\[
\de(v_1),\de(v_4) \mbox{ and } \left| \sC( v_1 ) \right| \mbox{ are }
\Theta_{\mathbb{P}}^{\log N}\left( N^\frac{\gamma}{2-2\gamma} \right)
\]
which implies that we have $\Theta_{\mathbb{P}}^{\log N}(1)$ stationary mass on each side (by a similar argument as before). Hence by Proposition \ref{conductance_theorem} we have consensus time $\Omega_{\mathbb{P}}^{\log N} \left( N^{\frac{\gamma}{2-2\gamma}} \right).$

For the lower bound when $2\gamma<\theta < \frac{3-4\gamma}{2-2\gamma}$, we apply Corollary \ref{lower_meeting_bound} to $\mathscr{C}(1)$ to see that
\[
t_{\text{\textnormal{meet}}}^{\pi}( \mathscr{C}(1))
\geq \frac{(1-\sum_{v \in \mathscr{C}(1)} \pi(v)^2)^2}{4\sum_{v \in \mathscr{C}(1)} q(v) \pi(v)^2}
=\Theta_{\mathbb{P}}\left( \frac{|\mathscr{C}(1)|^2}{\sum_{v \in \mathscr{C}(1)} q(v)} \right)
=\Theta_{\mathbb{P}}\left(
 \frac{|\mathscr{C}(1)|^2}{\sum_{v \in \mathscr{C}(1)} \operatorname{d}(v)^\theta}\right).
\]
Recall the moment calculation in Lemma \ref{le:empirical_moment} to see that when $\theta \geq 1$
\[
\sum_{v \in \sC(1)} \de(v)^\theta
=
\Theta^{\log N}_{\mathbb{P}}\left( N^{\gamma\theta} \right), 
\]
whereas for $\theta \in (2\gamma,1)$ we instead have by Proposition \ref{prop:big_sum_of_degrees}
\[
\sum_{v \in \sC(1)} \de(v)^\theta \leq \sum_{v \in \sC(1)} \de(v)=O^{\log N}_{\mathbb{P}}(N^\gamma ).
\]
Combining the statements yields
\[
\frac{|\mathscr{C}(1)|^2}{\sum_{v \in \mathscr{C}(1)} \operatorname{d}(v)^\theta}=\Omega^{\log N}_{\mathbb{P}}\left(
N^{(2-\theta)\gamma}\vee N^\gamma
\right) .
\]
By Lemma \ref{binary_lower_bound} and Proposition \ref{prop:coal} this expression gives a lower bound for the consensus time.

For the final case, when $\theta <2\gamma$, we require another double star component, but this one must be that without a path, whose existence is stated in Proposition \ref{prop:existence_simple_double_star}. This double star is a tree structure with two adjacent ``star'' vertices $x$, $y$, where
\[
\de(x),\de(y)\mbox{ and } \left| \sC( x ) \right| \mbox{ are }
\Theta_{\mathbb{P}}^{\log N}\left( N^\frac{\gamma}{2-2\gamma} \right) . 
\]
Therefore, we have stationary mass of $\Theta_{\mathbb{P}}^{\log N}(1)$ in the vertices closest to $x$ and in those closest to $y$.
We note that
\[
\mathbf{Q}^\theta (x,y)=O_{\mathbb{P}}^{\log N}\left( N^\frac{\gamma(\theta-1)}{2-2\gamma} \right)
\]
and since the stationary distribution is uniform
\[
\pi(x)=O_{\mathbb{P}}^{\log N}\left( N^{-\frac{\gamma}{2-2\gamma}} \right).
\]
Thus, by the definition of the conductance~\eqref{conductance_definition}
\[
c(xy)=\pi(x)\mathbf{Q}^\theta (x,y)
=O_{\mathbb{P}}^{\log N}\left( N^\frac{\gamma(\theta-1)}{2-2\gamma} N^{-\frac{\gamma}{2-2\gamma}} \right).
\]
Combining the estimate on the stationary mass and the conductance, we have by Proposition \ref{conductance_theorem},
\[
t_{\rm meet}( \mathscr{C}(1)) 
=\Omega_{\mathbb{P}}^{\log N}\left(
N^{\frac{\gamma(2-\theta)}{2-2\gamma}}
\right),
\]
which gives the last remaining lower bound.
\end{proof}

{\bf Acknowledgements.}
We would like to thank Peter M\"orters 
and Alexandre Stauffer  
for many useful discussions.
JF is supported by a scholarship from the EPSRC Centre for Doctoral Training in Statistical Applied Mathematics at Bath (SAMBa), under the project EP/L015684/1.

\bibliographystyle{abbrv}
\bibliography{mathsci_bib}

\end{document}